\newtheorem{thm}{Theorem}[section]
\newtheorem{cor}[thm]{Corollary}
\newtheorem{lem}[thm]{Lemma}
\newtheorem{prop}[thm]{Proposition}
\newtheorem{ex}[thm]{Example}
\newtheorem{prob}[thm]{Problem}
\theoremstyle{definition}
\newtheorem{remark}[thm]{Remark}
\newtheorem{question}[thm]{Question}
\numberwithin{equation}{section}
\newcommand{\ep}{\varepsilon}
\newcommand{\al}{\alpha}
\newcommand{\Om}{\Omega}
\newcommand{\vp}{\phi}
\newcommand{\la}{\langle}
\newcommand{\ra}{\rangle}
\newcommand{\ol}{\overline}
\newcommand{\ti}{\widetilde}
\newcommand{\wh}{\widehat}
\newcommand{\supp}{\operatorname{supp}}
\newcommand{\rng}{\operatorname{range}}
\newcommand{\spn}{\operatorname{span}}
\newcommand{\fbl}{\mathrm{FBL}}
\newcommand{\fbp}{{\mathrm{FBL}}^{(p)}}
\newcommand{\FVL}{\mathrm{FVL}}
\newcommand{\N}{{\mathbb N}}
\newcommand{\R}{{\mathbb R}}
\newcommand{\Z}{{\mathbb Z}}
\newcommand{\Q}{{\mathbb Q}}
\newcommand{\cC}{{\mathcal C}}
\newcommand{\cD}{{\mathcal D}}
\newcommand{\cS}{{\mathcal S}}
\newcommand{\cL}{{\mathcal L}}
\newcommand{\cU}{{\mathcal U}}
\newcommand{\cX}{{\mathcal X}}
\newcommand{\cP}{{\mathcal P}}
\newcommand{\so}{\operatorname{so}}
\newcommand{\co}{\operatorname{co}}
\title[Banach lattices with upper $p$-estimates]{Banach lattices with upper $p$-estimates: free and injective objects}
\author{E.~Garc\'ia-S\'anchez}
\address{Instituto de Ciencias Matem\'aticas (CSIC-UAM-UC3M-UCM)\\
Consejo Superior de Investigaciones Cient\'ificas\\
C/ Nicol\'as Cabrera, 13--15, Campus de Cantoblanco UAM\\
28049 Madrid, Spain.}
\email{enrique.garcia@icmat.es}
\author{D.~H.\ Leung}
\address{Department of Mathematics\\
National University of Singapore\\
Singapore 119076.
} \email{dennyhl@u.nus.edu}
\author{M.~A.\ Taylor}
\address{Department of Mathematics\\
ETH Z\"urich, Ramistrasse 101, 8092 Z\"urich, Switzerland.
} \email{mitchell.taylor@math.ethz.ch}
\author{P.~Tradacete}
\address{Instituto de Ciencias Matem\'aticas (CSIC-UAM-UC3M-UCM)\\
Consejo Superior de Investigaciones Cient\'ificas\\
C/ Nicol\'as Cabrera, 13--15, Campus de Cantoblanco UAM\\
28049 Madrid, Spain.}
\email{pedro.tradacete@icmat.es}
\date{\today}
\subjclass[2020]{46B42 (primary); 06B25, 47B60 (secondary)} 
\keywords{Free Banach lattice; upper $p$-estimate; weak-$L^p$.}
\begin{document}
\begin{abstract}
We study the free Banach lattice $\fbl^{(p,\infty)}[E]$  with upper $p$-estimates generated by a Banach space $E$. Using a classical result of Pisier on factorization through $L^{p,\infty}(\mu)$ together with a  finite dimensional reduction, it is shown that the spaces $\ell^{p,\infty}(n)$ witness the universal property of $\fbl^{(p,\infty)}[E]$ isomorphically.  As a consequence, we obtain a functional representation for $\fbl^{(p,\infty)}[E]$,  answering a question from \cite{OTTT}.
More generally, our proof allows us to identify the norm of any free Banach lattice over $E$ associated with  a rearrangement invariant function space.

After obtaining the above  functional representation, we take the first steps towards analyzing the fine structure of $\fbl^{(p,\infty)}[E]$. Notably, we prove that the norm for $\fbl^{(p,\infty)}[E]$ cannot be isometrically witnessed by $L^{p,\infty}(\mu)$  and settle the question of characterizing when an embedding between Banach spaces extends to a lattice embedding between the corresponding free Banach lattices with upper $p$-estimates. To prove this latter result, we introduce a novel push-out argument, which when combined with the injectivity of $\ell^p$ allows us to give an alternative proof of the subspace problem for free $p$-convex Banach lattices. On the other hand, we prove that $\ell^{p,\infty}$ is not injective in the class of Banach lattices with upper $p$-estimates, elucidating one of many difficulties arising  in the study of  $\fbl^{(p,\infty)}[E]$. 
\end{abstract}

\maketitle
\allowdisplaybreaks
\tableofcontents
\section{Introduction}
 Given a Banach space $E$, the \emph{free Banach lattice generated by $E$} is a Banach lattice $\fbl[E]$ together with a linear isometric embedding $\phi_E: E\to \fbl[E]$ such that for every Banach lattice $X$ and every bounded linear operator $T:E\to X$ there is a unique lattice homomorphism $\widehat{T}:\fbl[E]\to X$ such that $\widehat{T}\circ \phi_E=T$ and $\|\widehat{T}\|=\|T\|.$ In other words, $\fbl[E]$ is the (unique) Banach lattice which converts linear operators defined on $E$ into lattice homomorphisms, in the sense of the following diagram:
\begin{center}
\tikzset{node distance=2cm, auto}

\begin{tikzpicture}
  \node (C) {$\fbl[E]$};
  \node (P) [below of=C] {$E$};
  \node (Ai) [right of=P] {$X$};
  \draw[->, dashed] (C) to node {$\exists ! \ \widehat{T}$} (Ai);
  \draw[<-] (C) to node [swap] {$\phi_E$} (P);
  \draw[->] (P) to node [swap] {$T$} (Ai);
\end{tikzpicture}

\end{center}
Over the past ten years, many novel techniques have been developed to understand the structural properties of $\fbl[E]$ as well as the rigidity of the correspondence $E\mapsto \fbl[E]$. This has led to a host of new developments in the theory of Banach spaces and Banach lattices, as well as to the resolution of several outstanding open problems. To indicate just a few examples, free Banach lattices are used in \cite{ART}  to solve a question of J.~Diestel on WCG spaces, in \cite{OTTT} to solve a question from \cite{TT}  regarding basic sequences satisfying maximal inequalities, and in \cite{Norm-attaining} to produce the first example of a lattice homomorphism which does not attain its norm. Free Banach lattices also play a fundamental role in the theory of projective Banach lattices \cite{AMR2,AMR,dePW} and are used in \cite{Aviles-Tradacete} to construct  Banach lattices of universal disposition, separably injective Banach lattices and push-outs. On the other hand, free Banach lattices  are known to have a remarkably rigid structure in their own right. Notably, it is proven in \cite{APR} that any disjoint collection in $\fbl[E]$ must be countable and in \cite{OTTT} that $\fbl[E]$ determines $E$ isometrically if $E^*$ is smooth.  This latter result has initiated a program to find geometric properties of a Banach space $E$ which can be equivalently characterized via lattice properties of $\fbl[E]$ -- see \cite{OTTT} for several such properties.
\medskip

Although  defined in an abstract manner, free Banach lattices admit a very concrete functional representation. To see this, denote by $H[E]$ the linear subspace of $\mathbb{R}^{E^*}$ consisting of all positively homogeneous functions $f:E^* \to \mathbb{R}$.
For $f\in H[E]$  define
\begin{equation*}\label{eq:ART}
 	\|f\|_{\fbl[E]}=
	\sup\left\{\sum_{k=1}^n |f(x_k^*)|: \, n\in\mathbb{N}, \, x_1^*,\dots,x_n^*\in E^*,
        \,  \sup_{x\in B_E} \sum_{k=1}^n |x_k^*(x)|\leq 1\right\}.
\end{equation*}
Given any $x\in E$, let $\delta_x\in H[E]$ be defined by
$$
	\delta_x(x^\ast):= x^\ast(x) 	\quad\mbox{for all }x^*\in E^*.
$$
It is easy  to see that
\begin{displaymath}
  H_1[E]:=\bigl\{f\in H[E] :  \|f\|_{\fbl[E]} <\infty\bigr\}
\end{displaymath}
is a sublattice of~$H[E]$ and that~$\|\cdot\|_{\fbl[E]}$ defines a
complete lattice norm on~$H_1[E]$. Moreover,
$\|\delta_x\|_{\fbl[E]}=\|x\|$ for every $x\in E$. As shown in \cite{ART},  $\fbl[E]$ coincides with the closed sublattice of $H_1[E]$ generated by $\{\delta_x:x\in E\}$ with respect to $\|\cdot\|_{\fbl[E]}$, together with the map $\phi_E(x)= \delta_x$. In particular, by positive homogeneity, we may view elements of $\fbl[E]$ as weak$^*$-continuous functions on the dual ball.
\medskip

For numerous reasons it is  useful to introduce a scale of free Banach lattices, indexed by $p\in [1,\infty]$. Recall that a Banach lattice $X$ is \emph{$p$-convex} if there exists a constant $M\geq 1$ such that for every $x_1,\ldots,x_m\in X$ we have
    \begin{equation}\label{P-convex def}
        \bigg\|\left(\sum_{k=1}^m|x_k|^p\right)^{\frac{1}{p}} \bigg\| \leq M \left(\sum_{k=1}^m\|x_k\|^p\right)^{\frac{1}{p}}.
    \end{equation}
 The least constant $M$ satisfying the above inequality is called the \emph{$p$-convexity constant of $X$} and is denoted by $M^{(p)}(X)$. We say that $X$ satisfies an \emph{upper $p$-estimate} (or that $X$ is a \emph{$(p,\infty)$-convex} Banach lattice) if \eqref{P-convex def} holds for all pairwise disjoint vectors $x_1,\ldots,x_m\in X$; the least constant is then called the \emph{upper $p$-estimate constant of $X$} and is denoted by $M^{(p,\infty)}(X)$. A classical result \cite[Proposition 1.f.6]{LT2} states that a Banach lattice $X$ has an upper $p$-estimate if and only if
\begin{equation}\label{upper p}
        \bigg\|\bigvee_{k=1}^m|x_k|\bigg\| \leq M\left(\sum_{k=1}^m\|x_k\|^p\right)^{\frac{1}{p}}
    \end{equation}
    for all $x_1,\dots, x_m\in X$. Moreover, the best choice of $M$ in \eqref{upper p} is exactly $M^{(p,\infty)}(X)$. By \cite{Byrd,LT2, Pisier},  every $p$-convex Banach lattice   (resp.~Banach lattice with an upper $p$-estimate) can be renormed to have $p$-convexity (resp.~upper $p$-estimate) constant one.   
\medskip

Given $p\in [1,\infty]$ and a Banach space $E$, one  defines the free $p$-convex Banach lattice generated by $E$ analogously to $\fbl[E]$: The \emph{free $p$-convex Banach lattice generated by $E$} is a Banach lattice $\fbp[E]$ with $p$-convexity constant one together with a linear isometric embedding $\phi_E: E\to \fbp[E]$ such that for every $p$-convex Banach lattice $X$ with $p$-convexity constant one and every bounded linear operator $T:E\to X$ there is a unique lattice homomorphism $\widehat{T}:\fbp[E]\to X$ such that $\widehat{T}\circ \phi_E=T$ and $\|\widehat{T}\|=\|T\|.$ By replacing $p$-convexity with upper $p$-estimates one obtains the definition of the \emph{free Banach lattice with upper $p$-estimates generated by $E$}, which is denoted by $\fbl^{(p,\infty)}[E]$.
\medskip

As shown in \cite{MR4326041},   $\fbp[E]$ admits an analogous functional representation  to  $\fbl[E]$, but with $\|\cdot\|_{\fbl[E]}$ replaced by
\begin{equation} \label{eq:ARTp}
 	\|f\|_{\fbp[E]}=
	\sup\left\{\left(\sum_{k=1}^n |f(x_k^\ast)|^p\right)^\frac{1}{p}: \, n\in\mathbb N, \, x_1^*,\dots,x_n^*\in E^*,
        \,  \sup_{x\in B_E} \left(\sum_{k=1}^n |x_k^\ast(x)|^p\right)^\frac{1}{p}\leq 1\right\}
\end{equation}
and $H_1[E]$ replaced by
\begin{displaymath}
  H_p[E]:=\bigl\{f\in H[E] :  \|f\|_{\fbp[E]} <\infty\bigr\}.
\end{displaymath}
Making  use of this explicit representation of $\fbp[E]$, most of the major results on $\fbl[E]$ can be shown  to hold for $\fbp[E]$. On the other hand, \cite[Sections 9.6 and 10]{OTTT} identify several interesting properties of $\fbp[E]$ which depend explicitly on $p$.
\medskip

The question of establishing a functional representation for $\fbl^{(p,\infty)}[E]$ was left as an open problem in \cite[Remark 6.2]{MR4326041} and then reiterated in \cite[Section 9.6]{OTTT}. The proof of the existence of  $\fbl^{(p,\infty)}[E]$ in \cite{MR4326041} proceeds by equipping the free vector lattice with a ``maximal" norm satisfying an upper $p$-estimate and then completing the resulting space.  However, this approach gives no practical information about the norm and cannot even guarantee that the resulting Banach lattice is a space of functions on the dual ball. For this reason, minimal progress has been made on $\fbl^{(p,\infty)}[E]$, with the exception of some interesting results being proven in \cite{PAP}.
\medskip

The first aim of this paper is to answer the above question and identify the norm for $\fbl^{(p,\infty)}[E]$. As a consequence, we  obtain the desired continuous injection $\fbl^{(p,\infty)}[E]\hookrightarrow C(B_{E^*})$ and the prospect of  ascertaining the fine structure of these spaces. As it turns out, up to equivalence, we have
\begin{equation} \label{eq:ARTup}
 	\|f\|_{\fbl^{(p,\infty)}[E]}=
	\sup\left\{\| (f(x_k^\ast))_{k=1}^n \|_{\ell^{p,\infty}(n)}: \, n\in\mathbb N, \, x_1^*,\dots,x_n^*\in E^*,
        \,  \sup_{x\in B_E}\| (x_k^\ast(x))_{k=1}^n\|_{\ell^{p,\infty}(n)}\leq 1\right\},
\end{equation}
where $\ell^{p,\infty}(n)$ denotes the canonical $n$-dimensional weak-$L^p$ space. Moreover, as we will see below, the proof that \eqref{eq:ARTup} is the correct norm is rather illuminating, in that it yields  a formula for the norm for the ``free Banach lattice" associated to other classes of lattices.
\medskip

From the above functional description of $\fbl^{(p,\infty)}[E]$,  we obtain immediate access to many results. For example, we  learn  from \cite{APR} that $\fbl^{(p,\infty)}[E]$ has only countable disjoint collections and from \cite{ART} that lattice homomorphic functionals separate the points of $\fbl^{(p,\infty)}[E]$. With more work, one can prove analogues of many of the results in \cite{OTTT} which were initially proven for $\fbp[E]$. However, certain deeper theorems do not immediately generalize to $\fbl^{(p,\infty)}[E]$. The purpose of the second half of this paper is to prove  results of this type. In particular, we solve the \emph{subspace problem} for $\fbl^{(p,\infty)}$ by characterizing when an embedding $\iota: F\hookrightarrow E$ induces a lattice embedding $\overline{\iota}: \fbl^{(p,\infty)}[F]\hookrightarrow \fbl^{(p,\infty)}[E]$. Obtaining such a characterization is by no means trivial, as the proof of the subspace problem for $\fbp$ relied heavily on the $L^p$-type structure of $p$-convex Banach lattices.
\subsection{Outline of the paper}\label{Intro sum}
In \Cref{s1} we study the free Banach lattice generated by $E$ associated to a non-empty class $\mathcal{C}$ of Banach lattices. We begin in \Cref{s1.1} by briefly reviewing the construction of the free vector lattice generated by $E$. After this, we equip the free vector lattice with the norm 
\[ \rho_\cC(f) := \sup\bigl\{\|\widehat{T}f\|_X: X \in \cC,\ T:E\to X \text{ is a linear contraction}\bigr\},\]
where $\wh T$ denotes the unique lattice homomorphic extension of $T$, and define the {\em free Banach lattice generated by $E$ associated to $\cC$} as   the completion of $(\FVL[E],\rho_\cC)$. This definition yields a Banach lattice $\fbl^\mathcal{C}[E]$ containing an isometric copy of $E$ with the ability to uniquely extend any operator $T:E\to X\in \mathcal{C}$ to a lattice homomorphism of the same norm. In \Cref{s2} we ``close" the class $\mathcal{C}$ by constructing a class $\overline{\mathcal{C}}\supseteq \mathcal{C}$ such that $\fbl^\mathcal{C}[E]=\fbl^\mathcal{\overline{C}}[E]\in \overline{\mathcal{C}}$. This then allows us to identify $\fbl^\mathcal{C}[E]$ as a universal object in this new class. In \Cref{s3} we  find an explicit formula for the norm of $\fbl^\mathcal{C}[E]$ when $\mathcal{C}$ consists of a single r.i.~space. This, in particular, allows us to identify  $\fbl^\mathcal{C}[E]$ as a space of weak$^*$-continuous positively homogeneous functions on $B_{E^*}.$ In \Cref{SEC 1.2} we characterize when continuous injective lattice homomorphisms extend injectively to the completion of a normed lattice, elucidating the difficulty in representing $\fbl^\mathcal{C}[E]$ inside of $C(B_{E^*})$.
\medskip

In \Cref{Application to convex} we  combine the results of \Cref{s1} with the Maurey and Pisier factorization theorems to reproduce the norm \eqref{eq:ARTp} for $\fbp[E]$ and show that \eqref{eq:ARTup} is a $(1-\frac{1}{p})^{\frac{1}{p}-1}$-equivalent lattice norm for $\fbl^{(p,\infty)}[E]$ with upper $p$-estimate constant one. We also characterize in \Cref{t3.6}  all of the  $\mathcal{C}$ for which we have $\fbl^\mathcal{C}[E]=\fbp[E]$ and all of the $\mathcal{C}'$ for which $\fbl^{\mathcal{C}'}[E]\approx \fbl^{(p,\infty)}[E]$. Then, in \Cref{Dist norm} we show that \eqref{eq:ARTup} is \emph{not} $1$-equivalent to the  norm satisfying the universal property of $\fbl^{(p,\infty)}[E]$. In fact, by characterizing the sequence spaces that $C$-lattice  embed into $(\bigoplus_{\mu\in \Gamma}L^{p,\infty}(\mu))_\infty$, we  are able to show that the entire class of weak-$L^p$ spaces (equipped with any of the renormings   in \Cref{lattice renorm}) fails to isometrically witness the  norm of the free Banach lattice with upper $p$-estimates.
\medskip

\Cref{Sub problem} is devoted to the subspace problem for  $\fbl^{(p,\infty)}$; that is, the problem of characterizing those embeddings $\iota: F\hookrightarrow E$ which induce a lattice embedding $\overline{\iota}: \fbl^{(p,\infty)}[F]\hookrightarrow \fbl^{(p,\infty)}[E]$. This is achieved in \Cref{s4.1} by utilizing a novel push-out argument. A benefit of our argument is that it applies equally well to $\fbp$. However, a priori, it yields a slightly different solution to the subspace problem for $\fbp$  than the one in \cite[Theorem 3.7]{OTTT}.  To reconcile this,  we prove  the injectivity of $\ell^p$ in the class of $p$-convex Banach lattices, which immediately yields the equivalence of the two solutions, up to constants. On the other hand, in \Cref{s4.3} we show that $\ell^{p,\infty}$ is \emph{not} injective in the class of Banach lattices with upper $p$-estimates, adding an additional novelty to our solution to the subspace problem for  $\fbl^{(p,\infty)}[E]$.

\section{The free Banach lattice associated to a class of Banach lattices}
\label{s1}
In this section we define and investigate the free Banach lattice $\fbl^\cC[E]$ generated by a Banach space $E$ associated to a class $\mathcal{C}$ of Banach lattices. We begin in \Cref{s1.1} with a brief review of the construction of the free vector lattice. Then, in \Cref{s2}, we define $\fbl^\cC[E]$ as the completion of $\FVL[E]$ under a certain norm,  identify an associated class $\overline{\mathcal{C}}\supseteq\mathcal{C}$ of Banach lattices, and  prove the universality of  $\fbl^\cC[E]$ in the class $\overline{\mathcal{C}}$. In \Cref{s3}, an explicit formula for the norm of $\fbl^\cC[E]$ is obtained when $\mathcal{C}$ consists of a single rearrangement invariant Banach lattice. This, in particular, yields a continuous injection $\fbl^\mathcal{C}[E]\hookrightarrow C(B_{E^*})$ for such $\mathcal{C}$. Finally, in \Cref{SEC 1.2} we supplement the above results with a characterization of the normed lattices for which every continuous lattice homomorphic injection extends injectively to the completion.

\subsection{Review of the free vector lattice}\label{s1.1}
Let $E$ be a Banach space and endow $B_{E^*}$ with the weak$^*$-topology.
For any $x\in E$, the function $\delta_x:B_{E^*}\to \R$ given by $\delta_x(x^*) = x^*(x)$ belongs to the space $C(B_{E^*})$ of continuous functions on $B_{E^*}$.
Define a sequence of subspaces of $C(B_{E^*})$ as follows:
\[ E_0= \{\delta_x: x\in E\};\ E_n = \spn(E_{n-1}\cup |E_{n-1}|) \text{  if $n\in \N$.}\]
Here, for a set $A$ in a vector lattice $X$, we are denoting $|A|=\{|x|:x\in A\}$. Let $\FVL[E]= \bigcup^\infty_{n=0}E_n$ and note that $E_0$ is a subspace of $C(B_{E^*})$ since $\sum^n_{k=1}a_k\delta_{x_k} = \delta_{\sum^n_{k=1}a_kx_k}$.

\begin{prop}\label{p1.1} $\FVL[E]$ is the smallest vector sublattice of $C(B_{E^*})$ containing $E_0$. \end{prop}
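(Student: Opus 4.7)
The plan is to verify the two assertions implicit in the statement: first, that $\FVL[E]$ is itself a vector sublattice of $C(B_{E^*})$ containing $E_0$; second, that any vector sublattice of $C(B_{E^*})$ containing $E_0$ must contain $\FVL[E]$.

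For the first part, I would begin by observing that the sequence $(E_n)$ is nested: since $E_{n-1}\subseteq \spn(E_{n-1}\cup |E_{n-1}|) = E_n$, the union $\FVL[E]=\bigcup_n E_n$ makes sense as an increasing union. To check that $\FVL[E]$ is a linear subspace, take $f,g\in\FVL[E]$ and pick $n$ with $f,g\in E_n$; since $E_n$ is a span, every linear combination of $f$ and $g$ lies in $E_n\subseteq\FVL[E]$. To check closure under the lattice operations it suffices to verify closure under absolute value (as $f\vee g = \tfrac12(f+g)+\tfrac12|f-g|$ and $f\wedge g = \tfrac12(f+g)-\tfrac12|f-g|$); but if $f\in E_n$ then $|f|\in |E_n|\subseteq E_{n+1}\subseteq\FVL[E]$. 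Containment of $E_0$ is immediate from $E_0\subseteq E_0\subseteq \FVL[E]$.

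For the second part, let $L$ be any vector sublattice of $C(B_{E^*})$ with $E_0\subseteq L$. I would prove $E_n\subseteq L$ by induction on $n$. The base case $n=0$ is the hypothesis. For the inductive step, if $E_{n-1}\subseteq L$, then $|E_{n-1}|\subseteq L$ because $L$ is closed under absolute value, and therefore the span of $E_{n-1}\cup|E_{n-1}|$, which equals $E_n$, sits inside $L$ because $L$ is linear. Taking the union over $n$ gives $\FVL[E]\subseteq L$.

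Neither step presents a real obstacle; the only mild subtlety is making sure one is comfortable passing between ``closed under absolute value'' and ``closed under $\vee,\wedge$,'' which is standard in any vector lattice. Since the whole argument is essentially bookkeeping about the inductive definition, the proof should be short and self-contained, requiring no input beyond the definitions of $E_n$ and of a vector sublattice.
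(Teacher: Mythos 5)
Your proof is correct and follows essentially the same route as the paper's: the nesting of the $E_n$'s to handle linearity, the observation that $f\in E_n$ gives $|f|\in E_{n+1}$ for the sublattice property, and induction on $n$ for minimality. The only cosmetic difference is that you spell out the reduction of $\vee,\wedge$ to $|\cdot|$, which the paper leaves implicit.
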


\begin{proof}
Obviously, $\spn(E_{n-1})\subseteq E_n$ for all $n\in \N$.  Hence, $\FVL[E]$ is a vector subspace of $C(B_{E^*})$ containing $\{\delta_x: x\in E\}$.
Assume that $f\in \FVL[E]$.  By definition, $f\in E_n$ for some $n\in \N$.
Thus, $|f| \in |E_n| \subseteq E_{n+1} \subseteq\FVL[E]$, so that $\FVL[E]$ is a vector sublattice of $C(B_{E^*})$.
\medskip

Assume that $G$ is a vector sublattice of $C(B_{E^*})$ containing $\{\delta_x: x\in E\}=E_0$.
If $E_{n-1}\subseteq G$ for some $n\in \N $, then $E_{n-1} \cup |E_{n-1}|\subseteq G$.
Hence, $E_n \subseteq G$.
By induction, $\FVL[E] = \bigcup^\infty_{n=0}E_n \subseteq G$.
\end{proof}




It follows from the expression for $\FVL[E]$ that each $f\in \FVL[E]$ is positively homogeneous in the following sense: $f(\lambda x^*) = \lambda f(x^*)$ if $x^*\in B_{E^*}$ and $0\leq \lambda\leq 1$.
Define $\vp := \vp_E:E \to \FVL[E]$ by $\vp(x) = \delta_x$. Clearly, $\vp$ is a linear operator.

\begin{prop}\label{p1.2} Let $f\in \FVL[E]$.  There is a finite subset $A$ of $E$
so that $f$ belongs to the sublattice of $\FVL[E]$ generated by $\{\delta_x: x\in A\}$.
\end{prop}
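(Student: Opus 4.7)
The plan is to prove the statement by induction on the level $n$ in the inductive construction $\FVL[E] = \bigcup_{n=0}^\infty E_n$. For each finite subset $A \subseteq E$, let $L(A)$ denote the vector sublattice of $\FVL[E]$ generated by $\{\delta_x : x \in A\}$; the goal is to show that for every $f \in \FVL[E]$ there exists a finite $A \subseteq E$ with $f \in L(A)$.

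For the base case $n = 0$, an element $f \in E_0$ is of the form $\delta_x$ for some $x \in E$, so taking $A = \{x\}$ suffices (and $f = 0$ is handled by any choice, e.g.\ $A = \{0\}$).

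For the inductive step, assume that every element of $E_{n-1}$ lies in $L(A)$ for some finite $A \subseteq E$ depending on the element. Given $f \in E_n = \spn(E_{n-1} \cup |E_{n-1}|)$, write
\[ f = \sum_{i=1}^r a_i g_i + \sum_{j=1}^s b_j |h_j|, \]
where $g_1,\dots,g_r, h_1,\dots,h_s \in E_{n-1}$ and $a_i, b_j \in \R$. By the inductive hypothesis, each $g_i$ lies in $L(A_i)$ for some finite $A_i \subseteq E$, and each $h_j$ lies in $L(B_j)$ for some finite $B_j \subseteq E$. Set $A := \bigcup_{i=1}^r A_i \cup \bigcup_{j=1}^s B_j$, which is still a finite subset of $E$. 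Since $A_i, B_j \subseteq A$, we have $L(A_i), L(B_j) \subseteq L(A)$, so every $g_i$ and every $h_j$ lies in $L(A)$. As $L(A)$ is a vector sublattice, it also contains every $|h_j|$, and hence the linear combination $f$. This completes the induction.

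There is no real obstacle here; the argument is entirely routine once one notices that the construction $E_n = \spn(E_{n-1} \cup |E_{n-1}|)$ only involves finitely many elements at each stage, so finiteness propagates through the induction. The only point to be mindful of is keeping track of the absolute values: one must apply the inductive hypothesis to the $h_j$'s inside the $|\cdot|$ and then use that $L(A)$ is closed under taking absolute values.
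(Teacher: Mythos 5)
Your proof is correct and follows essentially the same route as the paper's: induction on the level $n$ in the construction $E_n = \spn(E_{n-1}\cup|E_{n-1}|)$, applying the inductive hypothesis to the generators, taking the union of the resulting finite sets, and using that the generated sublattice is closed under linear combinations and $|\cdot|$. The only cosmetic difference is that the paper writes the representation as $f=\sum_{k=1}^r(a_kf_k+b_k|f_k|)$ rather than separating the terms $g_i$ and $|h_j|$, which changes nothing of substance.
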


\begin{proof}The  assertion is trivial if $f\in E_0$.
Assume that it holds whenever $f\in E_{n-1}$.
Let $f\in E_n$ and represent $f = \sum^r_{k=1}(a_kf_k + b_k|f_k|)$ where $a_k, b_k\in \R$ and $f_k\in E_{n-1}$.  By the inductive hypothesis, there is a finite set $A$ so that $f_k, 1\leq k\leq r$,  belong  to the sublattice generated by $\{\delta_x:x\in A\}$. By construction, $f$ belongs to  the same sublattice.
%
\end{proof}

An alternative way to prove Propositions \ref{p1.1} and \ref{p1.2} is to utilize \cite[Exercise 8, p.~204]{AB}. The following statement is implicit in \cite[Theorem 2.1]{OTTT}, but we include the details for the convenience of the reader.

\begin{prop}\label{p1.3}
$(\FVL[E],\vp)$ is the free vector lattice generated by $E$ in the following sense: If $X$ is an Archimedean vector lattice and $T:E\to X$ is a linear operator then there is a unique linear vector lattice homomorphism $\wh{T}: \FVL[E] \to X$ such that $\wh{T}\vp = T$.
\end{prop}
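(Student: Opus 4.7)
The plan is to construct $\widehat{T}$ by substitution, using \Cref{p1.2} to reduce to finitely many generators and the Yosida--Kakutani representation to transfer identities between $\FVL[E]$ and $X$.

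\emph{Uniqueness} is straightforward by induction on the $E_n$: any lattice homomorphism extending $T$ must equal $T(x)$ on $\delta_x$, and the identities $\widehat{T}(\alpha f + \beta g) = \alpha\widehat{T}(f) + \beta\widehat{T}(g)$ and $\widehat{T}(|g|) = |\widehat{T}(g)|$ then determine $\widehat{T}$ on $E_n$ once it is known on $E_{n-1}$.

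For \emph{existence}, given $f \in \FVL[E]$, I use \Cref{p1.2} to pick a finite set $A = \{x_1, \ldots, x_m\} \subseteq E$, which I may assume is linearly independent, such that $f$ lies in the vector sublattice $L(A) \subseteq \FVL[E]$ generated by $\{\delta_{x_1}, \ldots, \delta_{x_m}\}$. Writing $f = P(\delta_{x_1}, \ldots, \delta_{x_m})$ for some lattice-linear polynomial $P$ (built from the $m$ coordinate projections via scalar multiplication, addition, and $|\cdot|$), I set $\widehat{T}(f) := P(T(x_1), \ldots, T(x_m))$. The crux is well-definedness:
\[ P(\delta_{x_1}, \ldots, \delta_{x_m}) = 0 \text{ in } \FVL[E] \quad \Longrightarrow \quad P(T(x_1), \ldots, T(x_m)) = 0 \text{ in } X. \]
I would prove this in two steps. \emph{First}, equality in $\FVL[E] \subseteq C(B_{E^*})$ is pointwise, so the hypothesis reads $P(x^*(x_1), \ldots, x^*(x_m)) = 0$ for all $x^* \in B_{E^*}$; given any $(\xi_1, \ldots, \xi_m) \in \R^m$, linear independence of $A$ lets me define a functional on $\spn A$ by $x_i \mapsto \xi_i$ and extend it by Hahn--Banach to some $\tilde{x}^* \in E^*$, a positive rescaling of which lies in $B_{E^*}$. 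Since $P$ is positively homogeneous of degree one, this upgrades the identity to $P(\xi_1, \ldots, \xi_m) = 0$ for every $(\xi_1, \ldots, \xi_m) \in \R^m$. \emph{Second}, the sublattice $Y \subseteq X$ generated by $T(x_1), \ldots, T(x_m)$ is Archimedean and has $u = |T(x_1)| + \cdots + |T(x_m)|$ as a strong order unit (an easy induction on the complexity of a lattice-linear polynomial $Q$ shows $|Q(T(x_1),\ldots,T(x_m))| \leq C_Q\, u$), so by the Kakutani--Krein--Yosida theorem it embeds as a unital vector sublattice of $C(K)$ for some compact Hausdorff $K$. Evaluation at $k \in K$ yields a real lattice homomorphism $\phi_k : Y \to \R$, and
\[ \phi_k\bigl(P(T(x_1), \ldots, T(x_m))\bigr) = P\bigl(\phi_k(T(x_1)), \ldots, \phi_k(T(x_m))\bigr) = 0 \]
by Step 1; since such $\phi_k$ separate points of $Y$, the vanishing in $X$ follows. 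Consistency of $\widehat{T}(f)$ as $A$ is enlarged is routine (passage to a common refinement and reduction to a basis of $\spn A$), and the resulting $\widehat{T}$ is linear, extends $T$, and preserves $|\cdot|$ by construction.

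\emph{Main obstacle.} The well-definedness verification is the crux: the ``substitute $T(x_i)$ for $\delta_{x_i}$'' recipe is only meaningful once one knows that any vector-lattice identity holding among the $\delta_{x_i}$ must also hold among the $T(x_i)$. Neither ingredient alone suffices, since $\FVL[E]$ sees only continuous functionals on $E$ restricted to $B_{E^*}$, while lattice homomorphisms on $Y$ may assign arbitrary real values to the $T(x_i)$. Bridging this gap requires combining the Hahn--Banach/positive-homogeneity lift in Step 1 with the Yosida representation of finitely generated Archimedean vector lattices in Step 2.
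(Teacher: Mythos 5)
Your proof is correct and is essentially the paper's argument: both construct $\wh{T}$ by substituting $Tx_i$ for $\delta_{x_i}$ in a lattice-linear expression and verify well-definedness by representing a finitely generated portion of $X$ inside some $C(K)$ (you use the sublattice generated by the $Tx_i$, with strong unit $\sum_i|Tx_i|$, via Kakutani--Krein--Yosida; the paper uses the principal ideal generated by $\sum_i |Tx_i|$) and then realizing the relevant point evaluations as Hahn--Banach extensions of functionals on a finite-dimensional subspace of $E$. The only variation is the order of the two ingredients: you first reduce to linearly independent generators so that \emph{every} tuple in $\R^m$ is attained by some $x^*\in E^*$, upgrading the relation to the formal identity $P\equiv 0$ on $\R^m$ before applying the point evaluations, whereas the paper skips the independence reduction by extending, for each point $t\in K$, the specific functional $\ell_t(x)=(iTx)(t)$ -- well defined even for dependent generators since linear relations pass through $T$ -- which yields exactly the tuples $\bigl(x^*_t(x_1),\dots,x^*_t(x_n)\bigr)$ needed.
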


\begin{proof}Let  $A$ be a finite set in $E$ and let  $L_A$ be the sublattice of $C(B_{E^*})$ generated by $\{\delta_x: x\in A\}$.
Let $I=I_A$ be the principal lattice ideal in $X$ generated by $\sum_{x\in A}|Tx|$.
By standard theory, there is  a compact Hausdorff space $K$ and an injective vector lattice homomorphism $i:I \to C(K)$.
For each $t\in K$, define $\ell_t: \text{span}\, A \to \R$ by $\ell_t(\sum_{x\in A}a_xx) = \sum_{x\in A}a_x(iTx)(t)$.
Note that if $\sum_{x\in A}a_xx=0$ then $\sum_{x\in A}a_xTx = 0$. This implies that $\sum_{x\in A}a_xiTx=0$ and thus $\ell_t(\sum_{x\in A}a_xx) = 0$.
Hence, $\ell_t$ is a well-defined linear functional on $\text{span}\, A$.  Since $\text{span}\, A$ is a finite dimensional subspace of $E$, $\ell_t$ is bounded.  Let $x_t^*\in E^*$ be a Hahn-Banach extension of $\ell_t$.
\medskip

Suppose that $f\in \FVL[E]$.  By Proposition \ref{p1.2}, there exists a finite set $A = \{x_1,\dots, x_n\}$ in $E$ so that $f\in L_A$.
We can therefore write $f = G(\delta_{x_1},\dots, \delta_{x_n})$ where $G$ is a finite sequence of operations of taking linear combinations and $|\cdot|$.
Define $\wh{T}f = G(Tx_1,\dots,Tx_n)\in X$.
To see that $\wh{T}$ is well-defined, we have to show that  $\wh{T}(\sum a_k f_k)=0$  if
$\sum a_kf_k=0$, for any finite sum.
We may assume that $f_k\in L_A$ for all $k$. In particular,  we have $\wh{T}f_k\in I_A$.
Write $f_k = G_k(\delta_{x_1},\dots, \delta_{x_n})$ for some lattice-linear $G_k$ and note that for any $t\in K$,
\begin{align*}  [i\wh{T}(\sum a_kf_k)](t) &= \sum a_k\, iG_k(Tx_1,\dots, Tx_n)(t)\\
& =\sum a_kG_k(iTx_1(t),\dots, iTx_n(t))\\
& = \sum a_k G_k(x^*_t(x_1),\dots, x^*_t(x_n))\\
& = \sum a_k G_k(\delta_{x_1}(x^*_t),\dots, \delta_{x_n}(x^*_t))\\
& = \sum a_kf_k(x^*_t).
\end{align*}
Thus, $\sum a_kf_k =0$ implies $i\wh{T}(\sum a_kf_k) =0$.
Since $i:I_A\to C(K)$ is injective, $\wh{T}(\sum a_kf_k) =0$.
This proves that $\wh{T}$ is well-defined.
It follows from its definition that $\wh{T}$ is a linear vector lattice homomorphism.
\medskip

It is easy to see that for any $x\in E$, $\wh{T}\vp x = \wh{T}(\delta_x) = Tx$.
Let $S:\FVL[E]\to X$ be a vector lattice homomorphism so that $S\vp x = Tx$. Then for any $f = G(\delta_{x_1},\dots, \delta_{x_n})$ we must have
\[ Sf = G(S\delta_{x_1}, \dots, S\delta_{x_n}) = G(Tx_1,\dots, Tx_n) = \wh{T}f.\]
This proves the uniqueness of $\wh{T}$.
\end{proof}
\begin{remark}
    In this paper we are identifying $\FVL[E]$ as a space of functions on $B_{E^*}$. This is in contrast to previous works where $\FVL[E]$ was identified as a space of functions on $E^*$. By positive homogeneity, these interpretations are clearly equivalent.
\end{remark}

\subsection{Universality of the free Banach lattice associated to a class of Banach lattices}
\label{s2}
Let $\cC$ be a (non-empty) class of Banach lattices. For instance, $\cC$  could be the class of weak-$L^p$ spaces or the class of Banach lattices satisfying an upper $p$-estimate with constant $1$.
Given a Banach space $E$, define a norm $\rho_\cC$ on $\FVL[E]$ by
\[ \rho_\cC(f) = \sup\bigl\{\|\wh{T}f\|_X: X \in \cC,\ T:E\to X \text{ is a linear contraction}\bigr\}.\]
Clearly, $\rho_\mathcal{C}$ is a lattice norm on $\FVL[E]$.
The {\em free Banach lattice generated by $E$ associated to $\cC$} is the completion of $(\FVL[E],\rho_\cC)$ and is denoted by $\fbl^\cC[E]$.  It is easy to see that $\vp_E: E\to \fbl^\cC[E]$ is a linear isometric embedding.
Define an enlarged class of Banach lattices $\ol{\cC}$ so that $Y\in \ol{\cC}$ if and only if $Y$ is  lattice isometric to a closed sublattice of a Banach lattice of the form $(\bigoplus_{\gamma\in \Gamma}X_\gamma)_\infty$, where $X_\gamma \in \cC$ for all $\gamma$. Note that we do not require that the $X_\gamma$'s  be  distinct.


\begin{prop}\label{p2.1}
Suppose that $Y\in \ol{\cC}$ and $T:E\to Y$ is a bounded linear operator.
Then $\wh{T}:(\FVL[E] ,\rho_\cC)\to Y$ is a  vector  lattice homomorphism such that $\|\wh{T}\| = \|T\|$.\end{prop}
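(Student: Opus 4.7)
The lower bound $\|\widehat{T}\| \geq \|T\|$ is immediate: since $\phi_E: E \to \FVL[E]$ is isometric (which one verifies by testing against the class $\mathcal{C}$) and $\widehat{T}\circ \phi_E = T$, we get $\|T\| = \|\widehat{T}\phi_E\| \leq \|\widehat{T}\|\cdot\|\phi_E\| = \|\widehat{T}\|$. Moreover, by Proposition~\ref{p1.3}, $\widehat{T}$ is automatically a vector lattice homomorphism once we check that $Y$ is Archimedean, which is automatic for a Banach lattice. So the heart of the matter is proving $\|\widehat{T}f\|_Y \leq \|T\|\cdot \rho_{\mathcal{C}}(f)$ for every $f\in \FVL[E]$.

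The plan is to reduce this inequality, in two stages, to the defining property of $\rho_\mathcal{C}$. First, assume $T=0$ is excluded (trivial) and normalize so $\|T\| = 1$. Since $Y\in \overline{\mathcal{C}}$, there exist Banach lattices $X_\gamma \in \mathcal{C}$ ($\gamma\in \Gamma$) and a lattice isometric embedding $j: Y \hookrightarrow Z := (\bigoplus_{\gamma\in \Gamma} X_\gamma)_\infty$. The composition $jT: E\to Z$ satisfies $\|jT\| = 1$, and by the uniqueness clause of Proposition~\ref{p1.3}, the lattice homomorphism $j\circ \widehat{T}: \FVL[E]\to Z$ coincides with $\widehat{jT}$. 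Since $j$ is an isometric lattice embedding, $\|\widehat{T}f\|_Y = \|\widehat{jT}f\|_Z$, so it suffices to treat the case $Y = Z = (\bigoplus_\gamma X_\gamma)_\infty$.

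For this second stage, consider the canonical coordinate projections $\pi_\gamma: Z \to X_\gamma$, each of which is a contractive lattice homomorphism. The composition $\pi_\gamma\circ \widehat{T}: \FVL[E]\to X_\gamma$ is a lattice homomorphism whose restriction to $\phi_E(E)$ equals $\pi_\gamma T$; by uniqueness in Proposition~\ref{p1.3} we conclude $\pi_\gamma\circ \widehat{T} = \widehat{\pi_\gamma T}$. Because $\pi_\gamma T: E\to X_\gamma$ is a linear contraction into a member of $\mathcal{C}$, the definition of $\rho_\mathcal{C}$ gives $\|\widehat{\pi_\gamma T}f\|_{X_\gamma} \leq \rho_\mathcal{C}(f)$ for every $f\in \FVL[E]$. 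Taking a supremum over $\gamma$ and using the formula for the $\ell^\infty$-direct sum norm yields
\[
\|\widehat{T}f\|_Z = \sup_{\gamma\in \Gamma}\|\pi_\gamma \widehat{T}f\|_{X_\gamma} = \sup_{\gamma\in \Gamma}\|\widehat{\pi_\gamma T}f\|_{X_\gamma}\leq \rho_\mathcal{C}(f),
\]
which is the desired inequality for unit-norm $T$.

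There is no real obstacle in this argument; the point is entirely structural. The only thing one has to be careful about is the identification $\pi_\gamma\circ \widehat{T} = \widehat{\pi_\gamma T}$ and the analogous $j\circ \widehat{T} = \widehat{jT}$, both of which require the uniqueness half of the universal property of $\FVL[E]$ together with the observation that compositions of lattice homomorphisms are lattice homomorphisms. Once these are in hand, the definition of $\rho_\mathcal{C}$ and the $\ell^\infty$-sum description of $\overline{\mathcal{C}}$ do all the work.
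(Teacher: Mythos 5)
Your proof is correct and follows essentially the same route as the paper: the lower bound via the isometry of $\phi_E$, the identification $\pi_\gamma\circ\widehat{T}=\widehat{\pi_\gamma T}$ by the uniqueness clause of Proposition~\ref{p1.3}, and the $\sup_\gamma$ description of the $\ell^\infty$-sum norm. Your explicit two-stage reduction through the embedding $j:Y\hookrightarrow Z$ and the normalization $\|T\|=1$ are only cosmetic variations on the paper's argument, which works directly with $Y$ as a closed sublattice of $(\bigoplus_\gamma X_\gamma)_\infty$ and keeps $\|T\|$ general.
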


\begin{proof}
Since $Y$ is an Archimedean vector lattice, $\wh{T}:\FVL[E]\to Y$ is a vector lattice homomorphism.
Note that $\vp_E$ is an isometric embedding and  $\wh{T}\vp_E = T$.  Hence, $\|T\| \leq \|\wh{T}\|$.
On the other hand, suppose that $Y$ is a closed sublattice of $X$, where $X = (\bigoplus_{\gamma\in \Gamma}X_\gamma)_\infty$, with $X_\gamma\in \cC$ for all $\gamma$.
Let $\pi_\gamma$ be the projection from $X$ onto $X_\gamma$.
Then $\pi_\gamma T:E\to X_\gamma$ is a bounded linear operator with $\|\pi_\gamma T\| \leq \|T\|$.  By definition of $\rho_\cC$, $ \|\widehat{\pi_\gamma T} f\| \leq \|T\|\, \rho_\cC(f)$ for any $f\in \FVL[E]$.
As $\pi_\gamma$ is a lattice homomorphism, by the uniqueness of $\widehat{\pi_\gamma T}$, we have $\widehat{\pi_\gamma T} = \pi_\gamma \wh{T}$.
Thus, $\|\wh{T}f\|  = \sup_\gamma\|\pi_\gamma \wh{T}f\| \leq\|T\|\, \rho_\cC(f)$ for any $f\in \FVL[E]$. This shows that $\|\wh{T}\|\leq \|T\|$.
\end{proof}

Obviously, the operator $\wh{T}$  in \Cref{p2.1} extends to a lattice homomorphism from $\fbl^\cC[E]$ to $Y$, still denoted by $\wh{T}$.

\begin{cor}\label{c2.2}
Let $E$ be a Banach space. For any non-empty class $\cC$ of Banach lattices, the norms $\rho_\cC$ and $\rho_{\ol{\cC}}$ agree on $\FVL[E]$.  In particular, $\fbl^\cC[E] = \fbl^{\ol{\cC}}[E]$ as Banach lattices.
\end{cor}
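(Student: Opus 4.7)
The plan is to observe that \Cref{c2.2} follows almost immediately from \Cref{p2.1}, once we recognize that the inequality $\rho_\cC \leq \rho_{\ol{\cC}}$ is automatic. My strategy is to prove the two inequalities separately and then invoke the fact that both $\fbl^\cC[E]$ and $\fbl^{\ol{\cC}}[E]$ are defined as completions of the same underlying vector lattice $\FVL[E]$.

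For the easy direction, I would first note that $\cC \subseteq \ol{\cC}$: any $X \in \cC$ is isometrically a sublattice of the trivial $\ell^\infty$-sum $(\bigoplus_{\gamma \in \{*\}} X_\gamma)_\infty$ with $X_\gamma = X$. Since the supremum defining $\rho_{\ol{\cC}}$ is taken over a larger collection of target lattices than the one defining $\rho_\cC$, we get $\rho_\cC(f) \leq \rho_{\ol{\cC}}(f)$ for every $f \in \FVL[E]$.

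For the reverse inequality, the key is \Cref{p2.1}. Given $Y \in \ol{\cC}$ and a linear contraction $T : E \to Y$, the proposition applied to $T$ yields $\|\wh{T}f\|_Y \leq \|T\|\, \rho_\cC(f) \leq \rho_\cC(f)$ for every $f \in \FVL[E]$. Taking the supremum over all such $Y$ and $T$ then gives $\rho_{\ol{\cC}}(f) \leq \rho_\cC(f)$. Combined with the previous paragraph, this shows $\rho_\cC = \rho_{\ol{\cC}}$ on $\FVL[E]$.

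Finally, since both $\fbl^\cC[E]$ and $\fbl^{\ol{\cC}}[E]$ are by definition the completion of $(\FVL[E], \rho_\cC) = (\FVL[E], \rho_{\ol{\cC}})$, they coincide as Banach lattices. There is no real obstacle here; the whole content of the corollary is packaged into \Cref{p2.1}, whose role is precisely to allow us to enlarge $\cC$ to the sublattice-of-$\ell^\infty$-sum closure without changing the norm. The only point that requires a moment's care is the verification that $\cC \subseteq \ol{\cC}$, which is immediate from the definition of $\ol{\cC}$ by taking a one-element indexing set.
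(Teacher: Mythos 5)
Your proposal is correct and follows essentially the same route as the paper: the inequality $\rho_\cC \leq \rho_{\ol{\cC}}$ from $\cC \subseteq \ol{\cC}$, and the reverse inequality by applying \Cref{p2.1} to each $Y \in \ol{\cC}$ and each contraction $T:E\to Y$ (the paper phrases this via an $\ep$-near-optimal pair rather than bounding every term before taking the supremum, but the two are logically identical). The concluding step, that equal norms on $\FVL[E]$ yield equal completions, matches the paper as well.
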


\begin{proof}
Since $\cC\subseteq \ol{\cC}$ it is clear that $\rho_\cC\leq \rho_{\ol{\cC}}$.
On the other hand, suppose that $f\in \FVL[E]$ and $\ep >0$ are given.
There exists $Y\in \ol{\cC}$ and  a linear contraction $T: E\to Y$ such that $\rho_{\ol{\cC}}(f) \leq (1+\ep) \|\wh{T}f\|$.
By Proposition \ref{p2.1}, $\|\wh{T}f\| \leq \|T\|\, \rho_\cC(f)= \rho_\cC(f)$.
It follows that $\rho_{\ol{\cC}} \leq \rho_\cC$.
\end{proof}

\begin{prop}\label{p2.2}
The Banach lattice  $\fbl^\cC[E]$ belongs to the class $\ol{\cC}$.
\end{prop}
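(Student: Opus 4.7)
The natural approach is to realize $\fbl^\cC[E]$ concretely as a closed sublattice of a direct sum of members of $\cC$, via a diagonal construction. More precisely, let $\Gamma$ index a sufficiently rich collection of pairs $(X_\gamma, T_\gamma)$ with $X_\gamma \in \cC$ and $T_\gamma: E \to X_\gamma$ a linear contraction, and define
\[
\Psi: \FVL[E] \longrightarrow \Bigl(\bigoplus_{\gamma \in \Gamma} X_\gamma\Bigr)_\infty,
\qquad
\Psi(f) = \bigl(\widehat{T_\gamma} f\bigr)_{\gamma \in \Gamma}.
\]
Each coordinate $\widehat{T_\gamma}$ is a vector lattice homomorphism by \Cref{p1.3}, and by \Cref{p2.1} applied to the single summand $X_\gamma$ we have $\|\widehat{T_\gamma} f\|_{X_\gamma} \leq \rho_\cC(f)$, so $\Psi$ lands in the $\ell_\infty$-sum and is itself a vector lattice homomorphism. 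By the very definition of $\rho_\cC$, provided $\Gamma$ is chosen large enough to realize the supremum, $\|\Psi(f)\|_\infty = \rho_\cC(f)$, so $\Psi$ is an isometric lattice embedding of $(\FVL[E], \rho_\cC)$ into $(\bigoplus_\gamma X_\gamma)_\infty$.

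Once this is established, the proof finishes almost for free: since $(\bigoplus_\gamma X_\gamma)_\infty$ is a Banach lattice (hence complete), $\Psi$ extends uniquely by density to an isometric lattice homomorphism $\overline{\Psi}: \fbl^\cC[E] \to (\bigoplus_\gamma X_\gamma)_\infty$. The image $\overline{\Psi}(\fbl^\cC[E])$ is then a closed sublattice of $(\bigoplus_\gamma X_\gamma)_\infty$ lattice-isometric to $\fbl^\cC[E]$, which is precisely the definition of belonging to $\ol{\cC}$.

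The only genuinely delicate point is the set-theoretic one: the collection of all pairs $(X, T)$ with $X \in \cC$ is a proper class, not a set, so the ``direct sum over $\Gamma$'' needs to be legitimized. The fix is standard: fix $f \in \FVL[E]$ and observe via \Cref{p1.2} that $\widehat{T}f$ depends only on the restriction of $T$ to a finite subset of $E$, and that $\widehat{T}f$ sits inside the sublattice of $X$ generated by $T(E)$, which has density at most $\operatorname{dens}(E) + \aleph_0$. Therefore one may replace $\cC$ by a set of representatives of isometry classes of Banach lattices of density $\leq \operatorname{dens}(E)+\aleph_0$, and parametrize $T$ by elements of the unit ball of $\mathcal{L}(E, X)$; this yields an honest index set $\Gamma$ realizing $\rho_\cC(f)$ simultaneously for all $f$. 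This bookkeeping is the only step that requires any care, and I would carry it out first before writing down $\Psi$ to make sure the diagonal construction is well-defined.
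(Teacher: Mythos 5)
Your construction is, at bottom, the same as the paper's: a diagonal lattice homomorphism into an $\ell^\infty$-sum of members of $\cC$ that witnesses $\rho_\cC$ isometrically on $\FVL[E]$, followed by extension to the completion. The divergence is entirely in how the index set is legitimized, and there your bookkeeping, read literally, has a defect. You propose to ``replace $\cC$ by a set of representatives of isometry classes of Banach lattices of density $\leq \operatorname{dens}(E)+\aleph_0$''; but if one allows contractions into \emph{all} such Banach lattices, the supremum computed is $\rho_{\cC_1}$, the norm of the plain free Banach lattice $\fbl[E]$, not $\rho_\cC$ --- indeed, for an arbitrary Banach lattice $X$ and contraction $T:E\to X$, the element $\wh{T}f$ lies in the closed sublattice generated by $T(E)$, which always has density $\leq \operatorname{dens}(E)+\aleph_0$, so every Banach lattice contributes to your enlarged supremum. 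What you must take instead is a set of representatives of the closed sublattices generated by $T(E)$ \emph{inside members $X\in\cC$}. Note that these sublattices need not themselves belong to $\cC$, but this is harmless for the conclusion: an $\ell^\infty$-sum of closed sublattices of members of $\cC$ is a closed sublattice of the corresponding $\ell^\infty$-sum of the ambient members of $\cC$, so the image of your $\overline{\Psi}$ still certifies $\fbl^\cC[E]\in\ol{\cC}$. With this correction your argument is sound.

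The paper sidesteps all of this with a simpler device: it indexes the sum by the honest set $\FVL[E]\times\N$, choosing for each pair $(f,n)$ a contraction $T_{f,n}:E\to X(f,n)\in\cC$ with $\rho_\cC(f)\leq(1+\tfrac1n)\|\widehat{T_{f,n}}f\|$. The diagonal map $jg=(\widehat{T_{f,n}}g)_{f,n}$ is contractive by the definition of $\rho_\cC$ and satisfies $\|jg\|\geq\|\widehat{T_{g,n}}g\|\geq\tfrac{n}{n+1}\rho_\cC(g)$ for every $n$, hence is isometric --- no density estimates, no isometry classes, no passage to sublattices. Your density bound via \Cref{p1.2} and the sublattice generated by $T(E)$ is correct and standard, but it is extra machinery that the near-optimizer indexing renders unnecessary.
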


\begin{proof}
For any $f\in \FVL[E]$ and $n\in \N$, there exists $X(f,n)\in \cC$ and a linear contraction $T = T_{f,n}:E\to X(f,n)$ so that $ \rho_\cC(f)\leq (1+ \frac{1}{n})\|\wh{T}f\|$. Let $X =  (\bigoplus_{f\in \FVL[E], n\in\N}X(f,n))_\infty$.  Clearly, $X\in \ol{\cC}$.
Define $j:\fbl^\cC[E]\to X$
by $jg = (\widehat{T_{f,n}}g)_{f,n}$.
Obviously, $j$ is a vector  lattice homomorphism. 
Let $g\in \FVL[E]$. By definition, $\rho_\cC(g) \geq \|\widehat{T_{f,n}}g\|$ for all $(f,n)$.  Hence, $j$ is a linear contraction.
On the other hand, $\|jg\| \geq \|\widehat{T_{g,n}}g\| \geq \frac{n}{n+1}\,\rho_\cC(g)$ for all $n$, implying that  $\|jg\| \geq \rho_\cC(g)$. This shows that $j:\FVL[E]\to X$ is a lattice isometry. Consequently, $j:\fbl^\cC[E]\to X$ is as well.
\end{proof}
The above discussion motivates the following question.
\begin{prob}\label{pr2.4}
    Find necessary and sufficient conditions on classes of Banach lattices $\cC$ and $\cD$ so that $\rho_\cC = \rho_\cD$, or that $\rho_\cC$ is equivalent to $\rho_\cD$.
\end{prob}
In \Cref{Solution to C=Cbar} we will provide a complete solution to  \Cref{pr2.4} when $\cC$ is the class of $p$-convex Banach lattices and when $\cC$ is the class of Banach lattices with upper $p$-estimates.

\subsection{Rearrangement invariant spaces and the norm for \texorpdfstring{$\fbl^\cC[E]$}{}}\label{s3}
In this subsection, let $(\Om, \Sigma,\mu)$ be a non-atomic $\sigma$-finite measure space and let $X$ be a rearrangement invariant (r.i.) Banach function space on $\Om$ in the sense of \cite[Definition II.4.1]{BennettSharpley}.
To be specific, $X$ is a vector lattice ideal of $L^0(\Om,\Sigma,\mu)$, equipped with a complete lattice norm $\|\cdot\|$, so that $f \in X$, $g \in L^0(\Om,\Sigma,\mu)$ and $\mu\{|f| > t\} = \mu\{|g| > t\}$ for all $t \geq 0$ imply that $g\in X$ and $\|f\| = \|g\|$.
We further assume that $X$ has the Fatou property: Whenever $f_n\in X$, $0\leq f_n \uparrow f$ a.e.\ and $\sup\|f_n\| <\infty$ then $f\in X$ and $\|f\| = \sup_n\|f_n\|$. 
\medskip

Let $\cC$ be the class consisting of $X$ only.
Given a Banach space $E$, the aim of this subsection will be to produce a formula for the norm $\rho_\cC$ on $\FVL[E]$ and show that $\fbl^\cC[E] \subseteq C(B_{E^*})$.
\medskip

Given a sequence $\cU = (U_n)$  of disjoint measurable subsets of $\Om$ so that $0<\mu(U_n) < \infty$ for all $n$, we  define for each $f\in L^0(\Om,\Sigma,\mu)$ the conditional expectation over $\cU$ by
\[P_\cU f = \sum_n \frac{\int_{U_n}f\,d\mu}{\mu(U_n)}\chi_{U_n}.\]
Note that the above sum has at most one non-zero term at any given point of $\Omega$, so is well-defined as long as $\int_{U_n}f\,d\mu$ is finite for every $n\in \N$. This will be the case when $f\in X$.

\begin{prop}
Let $\cU = (U_n)$ be a sequence of disjoint measurable subsets of $\Om$ so that $\mu(U_n) < \infty$ for all $n$.
Then $P_\cU f\in X$ if $f\in X$ and $P_\cU$ is a positive contraction on $X$.
\end{prop}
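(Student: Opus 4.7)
The plan is to reduce the statement to the classical interpolation property of rearrangement invariant spaces. Positivity of $P_\cU$ is immediate from its definition, since for $f \geq 0$ each coefficient $\mu(U_n)^{-1}\int_{U_n}f\,d\mu$ is nonnegative, and the sum is a disjoint combination of indicators. For well-definedness of $P_\cU f$ when $f \in X$, I would first recall that any r.i.\ Banach function space $X$ on a $\sigma$-finite non-atomic measure space with the Fatou property is continuously embedded in $L^1(\mu) + L^\infty(\mu)$ (Bennett--Sharpley, Chapter II). Writing $f = f_1 + f_\infty$ with $f_1 \in L^1$ and $f_\infty \in L^\infty$, one has
\[ \left|\int_{U_n} f\,d\mu\right| \leq \|f_1\|_1 + \|f_\infty\|_\infty\,\mu(U_n) < \infty, \]
so each coefficient in the sum defining $P_\cU f$ is finite, and since the summands have disjoint supports, $P_\cU f$ is a well-defined measurable function.

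Next, I would verify the two endpoint contraction estimates by direct calculation. For the $L^\infty$-bound, since each coefficient $|\mu(U_n)^{-1}\int_{U_n}f\,d\mu|$ is bounded by $\|f\|_\infty$, one has $\|P_\cU f\|_\infty \leq \|f\|_\infty$. For the $L^1$-bound,
\[ \|P_\cU f\|_1 = \sum_n \left|\int_{U_n}f\,d\mu\right| \leq \sum_n \int_{U_n}|f|\,d\mu \leq \|f\|_1. \]
Hence $P_\cU$ is a positive contraction on both $L^1(\mu)$ and $L^\infty(\mu)$.

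Finally, I would invoke the Calder\'on--Mityagin interpolation theorem for r.i.\ spaces (Bennett--Sharpley, Theorem III.2.12), which asserts that any linear operator on $L^1(\mu) + L^\infty(\mu)$ that is bounded by $1$ on both $L^1$ and $L^\infty$ is automatically bounded by $1$ on every r.i.\ Banach function space with the Fatou property. Applied to $P_\cU$, this yields $P_\cU f \in X$ and $\|P_\cU f\|_X \leq \|f\|_X$ for all $f \in X$, as required. The plan contains no real obstacle; the only non-routine ingredient is the appeal to Calder\'on--Mityagin, which is a standard tool. A self-contained alternative would be to observe directly that $P_\cU f \prec f$ in the Hardy--Littlewood--P\'olya sense (arising essentially from Jensen's inequality applied block-wise) and then invoke the corresponding majorization principle for r.i.\ norms.
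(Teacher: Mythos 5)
Your proof is correct and takes essentially the same route as the paper: positivity and well-definedness of $P_\cU$ (the paper derives finiteness of the coefficients from property 5 of \cite[Definition I.1.1]{BennettSharpley}, you from the equivalent embedding $X\hookrightarrow L^1+L^\infty$), endpoint contraction bounds on $L^1(\mu)$ and $L^\infty(\mu)$, and then the Calder\'on--Mityagin theorem, which the paper invokes as \cite[Theorem II.2.2]{BennettSharpley} in the form that an r.i.\ space over a resonant (here non-atomic $\sigma$-finite) measure space is an exact interpolation space for the couple $(L^1,L^\infty)$. No gaps; your HLP-majorization alternative is just the direct proof of the same interpolation fact.
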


\begin{proof}
    First note that, since $X$ is a Banach function space, $\int_{U_n}f\,d\mu$ is finite for every $n\in\N$ (see property 5 in \cite[Definition I.1.1]{BennettSharpley}), so $P_\cU f$ is well-defined. It is straightforward to check that $P_\cU$ is a positive contraction from $L^1(\Om,\Sigma,\mu)$ to $L^1(\Om,\Sigma,\mu)$ and from $L^\infty(\Om,\Sigma,\mu)$ to $L^\infty(\Om,\Sigma,\mu)$. Thus, $P_\cU$ is an admissible operator for the compatible pair $(L^1(\Om,\Sigma,\mu), L^\infty(\Om,\Sigma,\mu))$. Moreover, since $X$ is a r.i.~space over a non-atomic $\sigma$-finite  (in particular, resonant) measure space, it follows from \cite[Theorem II.2.2]{BennettSharpley} that $X$ is an exact interpolation space, so $P_\cU:X\rightarrow X$ is a positive contraction.
\end{proof}

\begin{lem}\label{l2.4}
Let $h_1,\dots, h_n\in X$ and set $h = \sum^n_{k=1}|h_k|$.  Given $\ep > 0$,
there is a sequence  $\cU = (U_n)$ of disjoint measurable subsets of $\Om$  so that $0<\mu(U_n) < \infty$ for all $n$ and
$|h_k - \cP_\cU h_k| \leq \ep h$, $1\leq k\leq n$.
\end{lem}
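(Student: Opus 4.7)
The plan is to construct a countable partition $\cU = (U_n)_n$ of the support $S = \{h > 0\}$ into sets of positive, finite $\mu$-measure on which both $h$ and each ratio $h_k/h$ are nearly constant. Off $S$, the identity $h = \sum_{k=1}^n |h_k|$ forces $h_k = 0$, so the estimate $|h_k - P_\cU h_k| \leq \ep h$ holds trivially on $\Om \setminus S$ provided $\bigcup_n U_n$ agrees with $S$ up to a null set. The key point is that on any piece $U \subseteq S$ on which $h$ has small relative oscillation and on which the vector $(h_1/h,\dots,h_n/h)$ lies in a cube of small diameter, one may write $h_k = a_k h + r_k$ with $|a_k| \leq 1$ and $|r_k| \leq \delta h$; applying $P_U$ then reduces the problem to controlling $|h - P_U h|$ and $|P_U r_k|$ separately.

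To produce such a partition, first invoke $\sigma$-finiteness of $\mu$ to write $\Om = \bigsqcup_{\ell \in \N} \Om_\ell$ with $\mu(\Om_\ell) < \infty$. For parameters $\eta, \delta > 0$ to be chosen later, introduce the three families
\[ A_m = \bigl\{(1+\eta)^m \leq h < (1+\eta)^{m+1}\bigr\}, \quad m \in \Z, \]
the cubes $B_{\mathbf{j}}$, indexed by $\mathbf{j} \in \{1,\dots,\lceil 2/\delta \rceil\}^n$, obtained by subdividing $[-1,1]^n$ into half-open cubes of side $\delta$ and pulling them back via $t \mapsto (h_1(t)/h(t),\dots,h_n(t)/h(t))$, and the sets $\Om_\ell$. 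Define $\cU$ to be an enumeration of those triple intersections $A_m \cap B_{\mathbf{j}} \cap \Om_\ell$ that have positive measure. This yields a countable disjoint collection whose union agrees with $S$ up to a null set and each of whose members has positive, finite measure.

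On a single piece $U$ of $\cU$, choose $a_k \in [-1,1]$ with $|h_k/h - a_k| \leq \delta$ throughout $U$, so that $h_k = a_k h + r_k$ with $|r_k| \leq \delta h$. Taking conditional expectations gives
\[ |h_k - P_U h_k| \leq |a_k|\,|h - P_U h| + |r_k| + |P_U r_k|. \]
Because $h$ varies on $U$ only within $[(1+\eta)^m, (1+\eta)^{m+1})$, one obtains $|h - P_U h| \leq \eta h$ and $P_U h \leq (1+\eta) h$ pointwise on $U$; together with $|a_k| \leq 1$ and $P_U|r_k| \leq \delta P_U h$, the right-hand side is dominated by $(\eta + 2\delta + \delta\eta)\, h$. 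Picking $\eta$ and $\delta$ small enough that $\eta + 2\delta + \delta\eta \leq \ep$ then yields the desired bound. The main delicate point is not the algebraic bookkeeping but engineering the partition so that finite $\mu$-measure, near-constancy of $h$, and near-constancy of each ratio hold \emph{simultaneously} on each piece; the triple intersection $A_m \cap B_{\mathbf{j}} \cap \Om_\ell$ is designed precisely to reconcile these competing demands.
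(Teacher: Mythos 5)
Your proof is correct and follows essentially the same route as the paper: both partition $\{h>0\}$ into countably many pieces of positive finite measure, obtained from geometric level sets of $h$ refined via $\sigma$-finiteness, on which each $h_k$ is uniformly close to a constant \emph{relative to $h$}, and then use the fact that the conditional expectation $\cP_\cU$ preserves such an approximation piecewise. The only cosmetic differences are that the paper uses coarse layers $\{\ep^i \leq h < \ep^{i-1}\}$ and compares both $h_k$ and $\cP_\cU h_k$ to an intermediate simple function with absolute error $\ep^{i+1}/2 \leq \tfrac{\ep}{2}h$, whereas you use fine layers of ratio $1+\eta$ together with cube pullbacks of $(h_1/h,\dots,h_n/h)$ and the splitting $h_k = a_k h + r_k$ -- the same estimate, with the partition the paper merely asserts made explicit.
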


\begin{proof}
We may assume that $\ep <1$.
Let $V_i = \{\ep^{i} \leq h < \ep^{i-1}\}$.  Then $(V_i)_{i\in \Z}$ is a sequence of disjoint measurable subsets of $\Om$ so that $h = h_k = 0$ on $(\bigcup_i V_i)^c$ for any $k$.
Since the measure space is $\sigma$-finite, for each $i$ there is an at most countable measurable partition $(U_{ij})_{j\in J_i}$ of $V_i$ with $0<\mu(U_{ij})<\infty$ and $b_{kij} \in \R$ so that
\[ |h_k\chi_{V_i} - \sum_{j\in J_i} b_{kij}\chi_{U_{ij}}| \leq \frac{\ep^{i+1}}{2},\  i\in \Z,\  1\leq k\leq n.\]
Thus, for each $1\leq k\leq n$ we have
\begin{equation}\label{e1.1} |h_k - \sum_{i,j} b_{kij}\chi_{U_{ij}}| \leq \sum_i \frac{\ep^{i+1}}{2}\chi_{V_i}\leq \frac{\ep h}{2}
\end{equation}
and
\[  \bigl|\frac{1}{\mu(U_{ij})}\int_{U_{ij}}h_k  \,d\mu - b_{kij} \bigr| = \bigl|\frac{1}{\mu(U_{ij})}\int_{U_{ij}}h_k - b_{kij}\chi_{U_{ij}}\,d\mu\bigr| \leq \frac{\ep^{i+1}}{2}.
\]
Take $\cU = (U_{ij})_{i\in \Z, j\in J_i}$. Note that $\cP_\cU h_k = 0$ on the set $(\bigcup_{ij}U_{ij})^c$.
For $1\leq k\leq n$,
\begin{equation}\label{e1.2} |\cP_\cU h_k - \sum_{i,j}b_{kij}\chi_{U_{ij}}| \leq \sum_{i,j}\frac{\ep^{i+1}}{2}\chi_{U_{ij}} = \sum_i \frac{\ep^{i+1}}{2}\chi_{V_i}\leq \frac{\ep h}{2}.
\end{equation}
Summing (\ref{e1.1}) and (\ref{e1.2}) gives the desired result.
\end{proof}

Let $I = \{(m,r) \in \N^2: \frac{m}{r} \leq \mu(\Om)\}$.
If $(m,r)\in I$, take any  sequence $(V_i)^m_{i=1}$ of disjoint measurable subsets of $\Om$ such that $\mu(V_i) = r^{-1}$, $1\leq i\leq m$.
Define a norm $\rho_{mr}$ on $\R^m$  by
\[ \rho_{mr}(a_1,\dots,a_m) = \|\sum^m_{i=1}a_i\chi_{V_i}\|_X.\]
Let $E$ be a Banach space and define $\rho :C(B_{E^*})\to [0,\infty]$   by
\[ \rho(f) = \sup \rho_{mr}(f(x^*_1),\dots, f(x^*_m)),\]
where the supremum is taken over all $(m,r)\in I$ and $x^*_1,\dots, x^*_m\in E^*$ which satisfy the constraint $\sup_{x\in B_E}\rho_{mr}(x_1^*(x),\dots, x_m^*(x)) \leq 1$.
By definition, $\rho(\delta_x) = \|x\|$ for any $x\in E$.
Moreover, it is easy to check that $\rho$ is a complete lattice norm on the vector lattice $\{f\in C(B_{E^*}): \rho(f) < \infty\}$ which contains $\FVL[E]$. For our next result, recall that we are considering  the case $\cC=\{X\}$.

\begin{thm}\label{t2.5}
The norms $\rho$ and $\rho_\cC$ are equal on $\FVL[E]$. In particular, $\fbl^\cC[E] \subseteq \{f\in C(B_{E^*}): \rho(f) < \infty\}$.
\end{thm}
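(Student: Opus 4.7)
The plan is to establish the two inequalities $\rho\leq\rho_\cC$ and $\rho_\cC\leq\rho$ on $\FVL[E]$; the ``in particular'' conclusion then follows since $\{f\in C(B_{E^*}):\rho(f)<\infty\}$ is complete under $\rho$ and contains $\FVL[E]$, so it also contains the completion $\fbl^\cC[E]$. For $\rho\leq\rho_\cC$, fix a test configuration: $(m,r)\in I$, disjoint $V_1,\ldots,V_m$ with $\mu(V_i)=1/r$, and $x_1^*,\ldots,x_m^*\in E^*$ with $\sup_{x\in B_E}\|\sum_i x_i^*(x)\chi_{V_i}\|_X\leq 1$. The operator $T:E\to X$ defined by $Tx:=\sum_i x_i^*(x)\chi_{V_i}$ is a linear contraction. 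Writing $f=G(\delta_{y_1},\ldots,\delta_{y_n})$ for a lattice-linear expression $G$, and viewing $f$ as the positively homogeneous function on $E^*$ given by the same expression, the disjointness of the $V_i$'s combined with the lattice-linearity of $G$ yields $\wh{T}f=\sum_i f(x_i^*)\chi_{V_i}$. Hence $\rho_{mr}(f(x_1^*),\ldots,f(x_m^*))=\|\wh{T}f\|_X\leq\rho_\cC(f)$, and the supremum over all configurations gives the inequality.

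For $\rho_\cC\leq\rho$, the substantive direction, fix a linear contraction $T:E\to X$ and $f=G(\delta_{x_1},\ldots,\delta_{x_n})\in\FVL[E]$. Apply \Cref{l2.4} to $h_k:=Tx_k$ to obtain a disjoint partition $\cU=(U_j)$ of finite-measure sets satisfying $|Tx_k-P_\cU Tx_k|\leq\ep\sum_j|Tx_j|$ for every $k$. Since $G$ is Lipschitz on $X^n$ with constant depending only on $G$ (all lattice operations being $1$-Lipschitz), we have $\|\wh{T}f-g\|_X=O(\ep)$, where $g:=G(P_\cU Tx_1,\ldots,P_\cU Tx_n)\in X$. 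Setting $y_j^*(x):=\mu(U_j)^{-1}\int_{U_j}Tx\,d\mu$, which lies in $E^*$ by the local $L^1$-boundedness axiom for Banach function spaces, a pointwise computation on each $U_j$ gives $g=\sum_j f(y_j^*)\chi_{U_j}$.

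It remains to bound $\|g\|_X$ by $\rho(f)$; this is where the discretization surgery occurs. For $k,N\in\N$, using non-atomicity of $\mu$, partition each $U_j$ ($j\leq N$) into $n_{j,k}:=\lfloor 2^k\mu(U_j)\rfloor$ disjoint measurable sets $V_{j,l}\subseteq U_j$ of measure $2^{-k}$, arranged so that $W_{j,k}:=\bigcup_l V_{j,l}$ is increasing in $k$ and $W_{j,k}\uparrow U_j$. The configuration $(V_{j,l},y_j^*)_{j\leq N,\,l\leq n_{j,k}}$ is admissible for $\rho_{m,2^k}$ with $m=\sum_{j\leq N}n_{j,k}$: the required normalization follows from the pointwise inequality $|\sum_{j,l}y_j^*(x)\chi_{V_{j,l}}|\leq|P_\cU Tx|$ combined with the contractivity of $P_\cU$ on $X$. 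Thus $\|g_{N,k}\|_X\leq\rho(f)$, where $g_{N,k}:=\sum_{j\leq N}f(y_j^*)\chi_{W_{j,k}}$. Two successive applications of the Fatou property (letting $k\to\infty$ and then $N\to\infty$) give $\|g\|_X\leq\rho(f)$. Combining with the $O(\ep)$ approximation, letting $\ep\downarrow 0$, and taking the supremum over $T$ gives $\rho_\cC(f)\leq\rho(f)$. The main obstacle is the mismatch between the arbitrary-measure cells produced by \Cref{l2.4} and the equal-measure $1/r$ blocks required by $\rho$; the crucial observation is that \emph{shrinking} each $U_j$ to a union of $2^{-k}$-blocks (rather than enlarging) automatically preserves the normalization constraint without additional control on $T$, while the Fatou property of $X$ absorbs the vanishing measure-defect.
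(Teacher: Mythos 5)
Your proof is correct and follows essentially the same route as the paper's: the easy direction via the disjointly supported lattice homomorphism $g\mapsto\sum_i g(x_i^*)\chi_{V_i}$, and the substantive direction via \Cref{l2.4}, the conditional-expectation functionals $y_j^*$, and an inner approximation of the cells $U_j$ by equal-measure blocks, with the Fatou property recovering the norm and the pointwise domination by $|P_\cU Tx|$ preserving the normalization constraint. The only (cosmetic) difference is the discretization step: you shrink each $U_j$ to dyadic $2^{-k}$-blocks and apply Fatou twice, whereas the paper first truncates to finitely many cells and then passes to subsets $V_i\subseteq U_i$ of rational measure $j_i/r$ with a common denominator $r$ --- the same idea.
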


\begin{proof}
Suppose that $f\in \FVL[E]$. There are $x_1,\dots, x_n\in E$ and a lattice-linear function $G$ so that $f = G(\delta_{x_1},\dots, \delta_{x_n})$.
Let $\ep >0$ be given.
There are $(m,r)\in I$ and $x^*_1,\dots, x^*_m\in E^*$ so that
$\sup_{x\in B_E}\rho_{mr}(x_1^*(x),\dots, x_m^*(x)) \leq 1$ and
\[ \rho(f) \leq \rho_{mr}(f(x^*_1),\dots, f(x^*_m)) + \ep = \|\sum^m_{i=1}f(x^*_i)\chi_{V_i}\|_X + \ep,\]
where $(V_i)^m_{i=1}$ is the sequence of disjoint measurable subsets of $\Om$ chosen above, so that $\mu(V_i) = r^{-1}$, $1\leq i\leq m$.
Define a linear  operator $S:\FVL[E]\to X$
by $S g = \sum^m_{i=1}g(x^*_i)\chi_{V_i}$.
It is clear that $S$ is a lattice homomorphism.
By definition of $\rho$, if $\rho(g) \leq 1$, then $\rho_{mr}(g(x^*_1),\dots, g(x^*_m)) \leq 1$
and thus $\|Sg\|\leq 1$.
Let $T = S\vp_E:E \to X$.  Then $\|T\| \leq 1$ and hence
\[ \rho_\cC(f) \geq \|\wh{T}f\| = \|\widehat{S\vp_E}f\| = \|G(S\delta_{x_1},\dots,S\delta_{x_n})\| = \|Sf\|,\]
where the last step holds since $S$ is a lattice homomorphism.
Thus, $\rho_\cC(f) \geq \rho(f) -\ep$ for any $\ep >0$. Therefore, $\rho_\cC(f) \geq \rho(f)$.
\medskip

Conversely, given $\ep > 0$, there exists a linear contraction $T:E\to X$ so that $\rho_\cC(f) \leq \|\wh{T}f\| + \ep$.
Let  $h_k =  Tx_k$, $1\leq k\leq n$, and $h = \sum^n_{k=1}|h_k|$.
By Lemma \ref{l2.4}, there is a sequence $\cU = (U_i)$ of disjoint measurable subsets of $\Om$ so that $0<\mu(U_i)<\infty$ for all $i$ and  $|h_k-\cP_\cU h_k| \leq \ep h$, $1\leq k \leq n$.
There is a constant $C_G < \infty$, depending only on $G$, so that
\[ |\wh{T}f - G(\cP_\cU h_1,\dots, \cP_\cU h_n)| = |G(h_1,\dots, h_n) - G(\cP_\cU h_1,\dots, \cP_\cU h_n)| \leq C_G\ep h.\]
Express $\cP_\cU h_k$ as $\sum_i a_{ki}\chi_{U_i}$, where $T^*\chi_{U_i} = y^*_i$ and
\[ a_{ki} = \frac{1}{\mu(U_i)}\int_{U_i}h_k\,d\mu =\frac{y^*_i(x_k)}{\mu(U_i)}.\]
 Then
\begin{align*}
G(\cP_\cU h_1,\dots, \cP_\cU h_n)  &= G(\sum_ia_{1i}\chi_{U_i},\dots, \sum_ia_{ni}\chi_{U_i})\\
& = \sum_i G(a_{1i},\dots,a_{ni})\chi_{U_i}
\\
 & = \sum_i G(\delta_{x_1},\dots, \delta_{x_n})(y^*_i)\,\frac{\chi_{U_i}}{\mu(U_i)}
 \\& = \sum_i \frac{f(y^*_i)}{\mu(U_i)}\,\chi_{U_i}.
\end{align*}
Since $X$ has the Fatou property, there exists $l\in \N$ so that
\[ \|\sum^l_{i=1} \frac{f(y^*_i)}{\mu(U_i)}\,\chi_{U_i}\| > \|G(\cP_\cU h_1,\dots, \cP_\cU h_n)\| -\ep \geq \|\wh{T}f\| - (1+ C_G\|h\|)\ep.\]
Using the Fatou property once more,
choose disjoint Lebesgue measurable sets $V_1,\dots, V_l$ so that $V_i \subseteq U_i$, $\mu(V_i) \in \Q$ and
  \[ \|\wh{T}f\| - (1+ C_G\|h\|)\ep< \|\sum^l_{j=1}\frac{f(y^*_i)}{\mu(U_i)}\,\chi_{V_i}\|.\]
Write $\mu(V_i) =\frac{j_i}{r}$, where $j_i,  r\in \N$. 
Decompose each $V_i$ as a disjoint union $V_i = \bigcup^{j_i}_{s=1}V_{is}$, with $\mu(V_{is}) = r^{-1}$.
Take $j_0 = \sum^l_{i=1}j_s$.  It follows that $(j_0,r) \in I$.
Consider the sequence where each $\frac{y^*_i}{\mu(U_i)}$ is repeated $j_i$ times.  For any $x\in B_E$, let $b$ be the element of $\R^{j_0}$ so that each $\frac{y^*_i(x)}{\mu(U_i)}$ occurs $j_i$ times.
Then
\[ \rho_{j_0r}(b) = \|\sum^l_{i=1}\frac{y^*_i(x)}{\mu(U_i)}\,\chi_{V_i}\| \leq  \|\cP_\cU(Tx)\| \leq \|T\| =1.
\]
Recall that elements of $\FVL[E]$ have a homogeneity property given after Proposition \ref{p1.1}.
It follows from the definition of $\rho$ that, taking  $a$ to be the element of $\R^{j_0}$ where each $\frac{f(y^*_i)}{\mu(U_i)}$ occurs $j_i$ times,
\begin{align*}
\rho(f)  &\geq \rho_{j_0r}(a) =  \|\sum^l_{i=1}\sum^{j_i}_{s=1} \frac{f(y^*_i)}{\mu(U_i)}\,\chi_{V_{is}}\|
 = \|\sum^l_{i=1}\frac{f(y^*_i)}{\mu(U_i)}\,\chi_{V_i}\|\\
 &> \|\wh{T}f\| - (1+ C_G\|h\|)\ep\geq \rho_\cC(f) - (2+ C_G\|h\|)\ep.
\end{align*}
Thus, $\rho(f) \geq \rho_\cC(f)$.
\end{proof}
The above construction of $\fbl^\mathcal{C}[E]$ motivates the following question.
\begin{question}
    It is well-known that the properties of $\fbp[E]$ depend heavily on whether $p$ is finite or infinite.  Can the Banach lattices $\fbl^\mathcal{C}[E]$ be used to interpolate such properties? For example, if $(e_k)$ is the unit vector basis of $\ell^2,$ then by \cite[Propositions 5.14 and 6.4]{OTTT} $(|\delta_{e_k}|)$ gives a copy of $\ell^1$ in $\fbp[\ell^2]$ when $p<\infty$ and a copy of $\ell^2$ when $p=\infty$. Can we find for each $r\in (1,2)$ a class $\mathcal{C}_r$ so that $(|\delta_{e_k}|)$ behaves like $\ell^r$ in $\fbl^{\mathcal{C}_r}[\ell^2]$?
\end{question}

\subsection{Injective homomorphisms need not extend to the norm completion}\label{SEC 1.2}
In  \Cref{s2} we defined $\fbl^\mathcal{C}[E]$ as the completion of $(\FVL[E],\rho_\mathcal{C})$. Clearly, there is a continuous lattice homomorphic injection $(\FVL[E],\rho_\mathcal{C})\hookrightarrow C(B_{E^*})$ because of the inequality $\|\cdot\|_\infty\leq \rho_\mathcal{C}(\cdot)$ on $\FVL[E]$. In \cite{MR1664353} it is shown that under certain order continuity assumptions (which are not fulfilled in our situation) continuous lattice homomorphic injections  retain their injectivity after norm completion. Moreover, it is claimed in \cite[Lemma 14]{Schiavo} that \emph{any} continuous lattice homomorphic injection from a normed vector lattice to a Banach lattice lifts injectively to the completion. If this were the case, it would provide a straightforward solution to the question of finding a function lattice representation for $\fbl^{(p,\infty)}[E]$  asked in \cite[Remark 6.2]{MR4326041}. In this subsection, we give a complete characterization of when  continuous injective lattice homomorphisms  extend injectively to the completion, and, in particular, show that this need not always be the case.
\medskip

 Throughout this subsection  $X$ will be a vector lattice equipped with a lattice norm and $Y$ will be a Banach lattice.  We denote the norm completion of $X$ by $\wh{X}$ and consider polar sets with respect to the duality $\la \wh{X},X^*\ra$: for $A\subseteq \wh{X}$, set $A^\circ=\{x^*\in X^*:\langle a,x^*\rangle\leq1,\,\forall a\in A\}.$

\begin{thm}\label{t3}
Let $X$ be a normed vector lattice equipped with a lattice norm and let $\wh{X}$ be its norm completion.  The following are equivalent.
\begin{enumerate}
\item For any Banach lattice $Y$, any bounded linear injective lattice homomorphism $T:X\to Y$ extends to a bounded linear injective operator $\wh{T}:\wh{X}\to Y$.
\item For any $0 < x \in \wh{X}$, $I_x\cap X\neq \{0\}$, where $I_x$ is the closed ideal in $\wh{X}$ generated by $x$.
\end{enumerate}
\end{thm}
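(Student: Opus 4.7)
The plan is to prove the equivalence by identifying $\ker\wh{T}$ as the central object in both directions. I would use three standard facts throughout: the completion $\wh{X}$ of a normed vector lattice (with lattice norm) is again a Banach lattice containing $X$ as a norm-dense sublattice; the continuous extension $\wh{T}:\wh{X}\to Y$ of a bounded lattice homomorphism $T:X\to Y$ is still a lattice homomorphism, since the lattice operations are norm-continuous; and the kernel of a lattice homomorphism between Banach lattices is a closed order ideal. No polar machinery seems necessary for the argument itself.

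For (2)$\Rightarrow$(1), let $T:X\to Y$ be a bounded injective lattice homomorphism and suppose, for contradiction, that $\ker\wh{T}\neq\{0\}$. Pick any $0\neq z\in\ker\wh{T}$; then $|z|$ lies in $\ker\wh{T}$ as well, so we may assume we have $0<x\in\ker\wh{T}$. Since $\ker\wh{T}$ is a closed ideal and $I_x$ is the smallest closed ideal in $\wh{X}$ containing $x$, we have $I_x\subseteq\ker\wh{T}$. Hypothesis (2) produces a nonzero $y\in I_x\cap X$, which then satisfies $y\in\ker\wh{T}\cap X=\ker T$, contradicting injectivity of $T$.

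For (1)$\Rightarrow$(2), I would argue by contraposition: assume there exists $0<x_0\in\wh{X}$ with $I_{x_0}\cap X=\{0\}$, and exhibit an injective $T:X\to Y$ whose extension fails to be injective. The natural candidate is the quotient $Y:=\wh{X}/I_{x_0}$, which is a Banach lattice since $I_{x_0}$ is a closed ideal, with quotient map $q:\wh{X}\to Y$ a lattice homomorphism. Let $T:=q|_X$. Then $T$ is a bounded lattice homomorphism from $X$ into the Banach lattice $Y$, and the assumption $I_{x_0}\cap X=\{0\}$ says exactly that $\ker T=X\cap\ker q=\{0\}$, so $T$ is injective. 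By uniqueness of continuous extensions, $\wh{T}=q$, whose kernel contains $x_0\neq 0$; hence $\wh{T}$ is not injective, contradicting (1).

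The argument is short; the only slightly delicate step is the contrapositive of (1)$\Rightarrow$(2), where one must recognize that quotienting by the pathological ideal $I_{x_0}$ is exactly the construction that turns the obstruction described in (2) into an injectivity failure for $\wh{T}$. The rest is routine from the theory of closed ideals in Banach lattices.
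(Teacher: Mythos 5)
Your proof is correct, and in the direction (1)$\Rightarrow$(2) it takes a genuinely different, and cleaner, route than the paper. For (2)$\Rightarrow$(1) you and the paper do essentially the same thing: the paper unwinds the statement ``$I_x\subseteq\ker\wh{T}$'' by hand, choosing $z_n\to z$ with $|z_n|\leq\lambda_n x$ and passing to the limit, while you package it as ``$\ker\wh{T}$ is a closed ideal containing $x$, hence contains $I_x$''; same content. For (1)$\Rightarrow$(2), the paper fixes $0<x\in\wh{X}$ with $I_x\cap X=\{0\}$, builds the weaker lattice norm $\rho(u)=\sup\{|\la u,x^*\ra|:x^*\in(I_x)^\circ\cap B_{X^*}\}$ on $X$, takes the formal identity $i:X\to \wh{X_\rho}$ as the injective homomorphism, and derives a contradiction from the hypothetical injective extension via an approximation argument with functionals in the polar. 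You instead take $Y=\wh{X}/I_{x_0}$ with quotient map $q$ and set $T=q|_X$, observe that injectivity of $T$ is literally the hypothesis $I_{x_0}\cap X=\{0\}$, and note that the unique bounded extension of $T$ is $q$ itself, which kills $x_0$; since any bounded linear extension must agree with $q$ on the dense sublattice $X$, no injective extension of any kind exists, which is exactly what (1) asserts. This is shorter and avoids duality entirely. In fact the two constructions coincide in disguise: since $I_x$ is a closed subspace, $(I_x)^\circ$ is its annihilator, and standard duality gives $\rho(u)=\|q(u)\|_{\wh{X}/I_x}$, so the paper's $X_\rho$ is exactly $X$ carrying the quotient norm pulled back along $q$, and $\wh{X_\rho}$ is the closure of $q(X)$ inside your $Y$. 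What the paper's formulation buys is that the failure is witnessed by a formal identity between two lattice norms on the same vector lattice, which is precisely the shape of the map $(\FVL[E],\rho_\cC)\hookrightarrow C(B_{E^*})$ motivating the section; your formulation buys brevity and makes the mechanism --- quotienting by the pathological ideal --- completely transparent.
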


\begin{proof}
(1)$\implies$(2):  Suppose that there exists $0<x\in\wh{X}$ so that $I_x\cap X = \{0\}$.
By \cite[Proposition II.4.7]{Schaefer74}, $(I_x)^\circ$ is an ideal in $X^*$.
Define $\rho:X\to \R$ by
\[ \rho(u) = \sup_{x^*\in (I_x)^\circ\cap B_{X^*}}|\la u, x^*\ra|.\]
Since $(I_x)^\circ \cap B_{X^*}$ is a solid subset of $B_{X^*}$, $\rho$ is a lattice semi-norm on $X$ such that $\rho(\cdot) \leq \|\cdot\|$.
If $0 < u\in X$, then $u\notin I_x$.  Since $I_x$ is a closed subspace of $\wh{X}$, there exists $x^*\in (I_x)^\circ \cap B_{X^*}$ such that $\la u,x^*\ra\neq 0$.
Hence, $\rho(u) >0$.  This shows that $\rho$ is a norm on $X$.
\medskip

Denote by $X_\rho$ the vector lattice $X$ normed by $\rho$ and let $i:X\to X_\rho\subseteq \wh{X_\rho}$ be the formal identity, where $\wh{X_\rho}$ is the norm completion of $X_\rho$.
By construction, $i$ is a bounded linear injective lattice homomorphism.  By (1), $i$ extends to an injective bounded linear operator $\wh{i}:\wh{X}\to \wh{X_\rho}$.
Denote by $\wh{\rho}$ the norm on $\wh{X_\rho}$.
By injectivity of $\wh{i}$, $\wh{i}x\neq 0$.  Let $c = \wh{\rho}(\wh{i}x) > 0$.
There exists $(x_n)$ in $X$ that converges to $x$ in $\wh{X}$. By continuity, $(ix_n)$ converges in $\wh{X_\rho}$ to $\wh{i}x$.
Choose $N$ large enough so that $\wh{\rho}(ix_N-\wh{i}x) < \frac{c}{2}$.
Then $\rho(ix_N) > \frac{c}{2}$.  By definition of $\rho$, there exists $x^*\in (I_x)^\circ \cap B_{X^*}$ so that
$|\la ix_N, x^*\ra| > \frac{c}{2}$.
By definition of $\rho$ again, $x^*$ defines a bounded linear functional on $X_\rho$ with norm $\rho^*(x^*)\leq 1$.  
Hence,
\[  |\la x,x^*\ra| = |\la \wh{i}x,x^*\ra| \geq |\la ix_N,x^*\ra| - \wh{\rho}(ix_N-\wh{i}x)\cdot \rho^*(x^*) > \frac{c}{2} - \frac{c}{2} = 0.
\]
This is impossible since $x\in I_x$ and $x^*\in (I_x)^\circ$.

\medskip

\noindent(2)$\implies$(1):  Let $T:X\to Y$ be as in (1), and assume that its extension $\wh{T}$ is not injective. There exists some $x\in \wh{X}$, $x\neq 0$, such that $\wh{T}x=0$. Since $\wh{T}$ is a lattice homomorphism, we can assume that $x>0$. Let $I_x$ be the closed ideal in $\wh{X}$ generated by $x$. By (2), there is a non-zero element $z\in I_x\cap X$. Since $I_x$ is the closure of the ideal generated by $x$, there exists a sequence $(z_n)\subseteq \wh{X}$ converging to $z$ in $\wh{X}$ and some positive scalars $(\lambda_n)$ such that $0\leq |z_n|\leq \lambda_n x$. Therefore,
\[0\leq |\wh{T}z_n|\leq \lambda_n \wh{T}x=0 \]
and
\[Tz=\wh{T}z = \lim_n \wh{T}z_n=0, \]
so by the injectivity of $T$ we conclude that $z$ must be zero. This is a contradiction.
\end{proof}

The following example taken from \cite{MR4626544} illustrates the above situation.

\begin{ex}
There is a dense vector sublattice  $X$ of $c_0$ such that $I\cap X =\{0\}$, where $I$ is the closed ideal in $c_0$ generated by $e_1$; that is, $I = \{(a_n)\in c_0: a_n =0 \text{ for all $n\geq 2$}.\}$. Consequently, not every bounded linear injective lattice homomorphism from $X$ into a Banach lattice $Y$ extends to an injective operator from $c_0$ to $Y$.
\end{ex}

\begin{proof}
Let $X$ consist of all $(a_n)\in c_0$ such that there exists $m\in \N$ so that $a_n = \frac{a_1}{n}$ for all $n \geq m$.
Clearly, $X$ is a vector sublattice of $c_0$ with $e_n \in X$ if $n\geq 2$.
For any $n\geq 2$, $x_n: = e_1 + \sum^\infty_{k=n}\frac{e_k}{k} \in X$ and $(x_n)$ converges to $e_1$ in $c_0$.  Thus, $e_1\in \ol{X}$.  It follows that $\ol{X} = c_0$.
If $a = (a_n) \in I\cap X$, then $a_n= 0$ for all $n\geq 2$.  Since $a_1 = ma_m$ for sufficiently large $m$, $a_1=0$ as well.  Thus, $a =0$.
\end{proof}

\section{Applications to free \texorpdfstring{$p$}{}-convex and free \texorpdfstring{$(p,\infty)$}{}-convex Banach lattices}\label{Application to convex}

We now apply the results of Sections \ref{s2} and \ref{s3} to the classes $\cC_p$ and $\cC_{p,\infty}$ consisting   of all Banach lattices with  $p$-convexity  constant $1$ and  all  Banach lattices with upper $p$-estimate constant $1$,  respectively. For this purpose, we recall the following factorization theorems.

\begin{thm}[Maurey's Factorization Theorem]\label{t3.1} Let  $1 < p <\infty$, $A \subseteq L^1(\mu)$ and $0 <  M < \infty$. The following are equivalent.
\begin{enumerate}
\item For all finitely supported sequences $(\al_i)_{i\in I}$ of real numbers and $(f_i)_{i\in I}\subseteq A$,
\[ \|(\sum |\al_if_i|^p)^{\frac{1}{p}}\|_{L^{1} (\mu)} \leq M(\sum |\al_i|^p)^{\frac{1}{p}}.\]
\item  There exists $g\in L^1(\mu)_+$, $\|g\|_1= 1$, such that for any $f\in A$,
\[ \bigl\|\frac{f}{g}\bigr\|_{L^p(g\cdot \mu)} \leq M.\]
\end{enumerate}
\end{thm}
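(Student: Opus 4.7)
The plan is to prove the two implications separately. The direction $(2)\Rightarrow(1)$ is an immediate consequence of Hölder's inequality with conjugate exponents $p$ and $p'=p/(p-1)$: given finitely supported $(\alpha_i)$ and $(f_i)\subseteq A$, set $h = (\sum_i |\alpha_i f_i|^p)^{1/p}$, factor the integrand as $h = (h g^{-1/p'})\, g^{1/p'}$, and apply Hölder to obtain
\[
\int h\, d\mu \leq \Bigl(\int h^p g^{1-p}\, d\mu\Bigr)^{1/p} \|g\|_1^{1/p'} = \Bigl(\sum_i |\alpha_i|^p \int |f_i|^p g^{1-p}\, d\mu\Bigr)^{1/p}.
\]
Hypothesis $(2)$ says the inner integral equals $\|f_i/g\|_{L^p(g\cdot\mu)}^p \leq M^p$, which yields $(1)$.

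For $(1)\Rightarrow(2)$, I would phrase the problem as a minimax. Let $G = \{g \in L^1(\mu)_+ : \|g\|_1 = 1\}$ and let $P$ denote the convex set of finitely supported probability measures on $A$. For $g \in G$ and $\nu = \sum_i \lambda_i \delta_{f_i} \in P$, write $h_\nu = \sum_i \lambda_i |f_i|^p$ and set
\[
\Phi(g, \nu) = \int h_\nu\, g^{1-p}\, d\mu \in [0, +\infty],
\]
interpreting the integrand as $0$ on $\{h_\nu = 0\}$. Producing the $g$ required by $(2)$ reduces to showing $\inf_{g \in G} \sup_{\nu \in P} \Phi(g,\nu) \leq M^p$. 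Observe that $\Phi$ is affine (hence concave) in $\nu$ and convex in $g$, the latter since $t \mapsto t^{1-p}$ is convex on $(0,\infty)$ for $p>1$.

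The key computation is to evaluate $\inf_{g \in G}\Phi(g,\nu)$ for fixed $\nu$. Running the Hölder inequality above in its equality case shows that $\inf_{g \in G} \Phi(g, \nu) = \|h_\nu^{1/p}\|_1^p$, the minimum being attained at $g = h_\nu^{1/p}/\|h_\nu^{1/p}\|_1$. Invoking $(1)$ with $\alpha_i := \lambda_i^{1/p}$, so that $\sum_i |\alpha_i|^p = 1$, then yields $\|h_\nu^{1/p}\|_1 \leq M$, and hence $\sup_{\nu\in P} \inf_{g\in G} \Phi(g,\nu) \leq M^p$.

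The main obstacle is swapping the infimum and the supremum: $G$ is not compact in any natural topology on $L^1(\mu)$, so the classical von~Neumann minimax theorem does not apply directly. I would circumvent this by first reducing via separability to the case of countable $A$ and $\sigma$-finite $\mu$, and then restricting $g$ to simple functions on a fixed finite measurable partition of $\Omega$. On the resulting finite-dimensional simplex, Sion's minimax theorem applies and produces densities $g_n$ associated with increasingly fine partitions; a weak$^*$-cluster point in $(L^\infty)^*$ together with Fatou's lemma yields a $g \in G$ satisfying $\sup_{\nu\in P}\Phi(g,\nu) \leq M^p$, which is equivalent to $(2)$.
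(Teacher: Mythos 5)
Your direction (2)$\Rightarrow$(1) via H\"older is correct, and so is the skeleton of your (1)$\Rightarrow$(2): the identity $\inf_{g\in G}\Phi(g,\nu)=\|h_\nu^{1/p}\|_1^p$ and the substitution $\alpha_i=\lambda_i^{1/p}$ giving $\sup_\nu\inf_g\Phi\le M^p$ are exactly the classical reduction. (Note the paper does not prove this theorem; it cites \cite[Chapter 7]{AK}, where the interchange of $\inf$ and $\sup$ is handled by Ky Fan's lemma.) The fatal gap is in your compactness workaround. Restricting $g$ to simple functions over a fixed finite partition $\pi$ restricts the \emph{infimum} variable, so it can only increase the inner value: minimizing $\sum_k c_k^{1-p}\int_{B_k}h_\nu\,d\mu$ subject to $\sum_k c_k\mu(B_k)=1$ gives $\inf_{g\in G_\pi}\Phi(g,\nu)=\bigl\|(P_\pi h_\nu)^{1/p}\bigr\|_1^p$, where $P_\pi$ is the conditional expectation onto $\pi$, and by conditional Jensen this is $\ge\|h_\nu^{1/p}\|_1^p$; hypothesis (1) gives no control whatsoever on it. Worse, since the $f\in A$ lie only in $L^1(\mu)$, $\int_B|f|^p\,d\mu$ can be infinite on a cell $B$ (already for a single $f\in L^1$ with dense non-$L^p$ singularities), so for every fixed finite partition the discretized value $\sup_\nu\inf_{g\in G_\pi}\Phi$ can be $+\infty$. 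Sion's theorem applied partition by partition therefore produces densities $g_n$ about which no bound near $M^p$ is known, and there is nothing useful to pass to the limit. The correct finite-dimensional compactification sits on the other data: for a finite $F\subseteq A$, restrict $g$ to the (norm-compact, finite-dimensional) convex hull of the optimal densities $g_\nu$, $\nu\in\Delta_F$, which keeps the inner infimum $\le M^p$.

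Two further steps are also unjustified. First, the weak$^*$-cluster point in $(L^\infty)^*$ goes the wrong way: $g\mapsto\int h\,g^{1-p}\,d\mu$ is \emph{decreasing} in $g$, so mass escaping into the singular part of a weak$^*$ limit makes the countably additive part smaller and the functional larger; Fatou is available along a.e.-convergent subsequences (norm-$L^1$, or Koml\'os--Ces\`aro extractions), not along weak$^*$ nets, and the lower-semicontinuity inequality you need fails in the presence of a singular part. Second, the reduction to countable $A$ needs an argument: conclusion (2) demands a single $g$ valid for all of $A$ simultaneously, and knowing (2) for each countable subset, with varying $g$'s, does not produce one. The standard ways to close all three gaps at once are Ky Fan's lemma as in \cite[Chapter 7]{AK}, or Maurey's maximal-density argument: maximize $\psi\mapsto\int\psi^{1/p}\,d\mu$ over countable convex combinations $\psi$ of $\{|f|^p/M^p: f\in A\}$ and derive (2) for $g=\psi^{1/p}/\|\psi^{1/p}\|_1$ from the first-order optimality condition, using a Koml\'os-type almost-everywhere extraction in place of your weak$^*$ step.
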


\begin{thm}[Pisier's Factorization Theorem] \label{t3.2} Let $1 < p <\infty$, $A \subseteq L^1(\mu)$, $0 <  M < \infty$ and $\gamma_p = (1-\frac{1}{p})^{\frac{1}{p}-1}$. Consider the following conditions.
\begin{enumerate}
\item For all finitely supported sequences $(\al_i)_{i\in I}$ of real numbers and $(f_i)_{i\in I}\subseteq A$,
\[ \bigl\|\bigvee |\al_if_i|\bigr\|_{L^{1} (\mu)} \leq M(\sum |\al_i|^p)^{\frac{1}{p}}.\]
\item There exists $g\in L^1(\mu)_+$, $\|g\|_1\leq 1$, such that for any $f\in A$ and any $\mu$-measurable set $U$,
\[ \bigl\|f\chi_U\bigr\|_{L^{1} (\mu)} \leq \gamma_p M\,(\int_Ug\,d\mu)^{1-\frac{1}{p}}.\]
\end{enumerate}
Then (1)$\implies$(2).
\end{thm}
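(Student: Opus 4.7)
The plan is to derive (2) from (1) via a Ky Fan minimax reduction, followed by a level-set optimization that produces the sharp constant $\gamma_p$.

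\textbf{Step 1 (Minimax reduction).} Raising the inequality in (2) to the $q$-th power (where $q = p/(p-1)$), the goal becomes finding $g \in K := \{g \in L^1(\mu)_+ : \|g\|_1 \leq 1\}$ such that
\[\Bigl(\int_U |f|\,d\mu\Bigr)^q \leq (\gamma_p M)^q \int_U g\,d\mu \qquad \text{for every } (f, U) \in A \times \Sigma.\]
The right-hand side is linear in $g$, and $K$ is a convex subset of the weak-$*$ compact ball of $L^\infty(\mu)^*$. Applying Ky Fan's minimax theorem to the affine family $\{g \mapsto (\gamma_p M)^q\int_U g - (\int_U|f|\,d\mu)^q\}_{(f,U)}$ on $K$ reduces the problem to verifying, for every finite $(f_i, U_i)_{i=1}^n \subseteq A \times \Sigma$ and every $\beta_i \geq 0$ with $\sum_i \beta_i = 1$, the finite inequality
\[\sum_i \beta_i \Bigl(\int_{U_i}|f_i|\,d\mu\Bigr)^q \leq (\gamma_p M)^q \Bigl\|\sum_i\beta_i\chi_{U_i}\Bigr\|_\infty, \qquad (*)\]
because $\sup_{g \in K}\int g \sum_i \beta_i\chi_{U_i}\,d\mu = \|\sum_i\beta_i\chi_{U_i}\|_\infty$. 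A technical subtlety is that $L^1(\mu)$ is not reflexive, so the minimax initially takes place in $L^\infty(\mu)^*$; a Lebesgue decomposition argument then verifies that an extremizing functional may be chosen absolutely continuous with respect to $\mu$.

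\textbf{Step 2 (Verifying $(*)$).} Normalize $h := \sum_i\beta_i\chi_{U_i}$ so that $\|h\|_\infty \leq 1$, and set $s_i := \int_{U_i}|f_i|\,d\mu$. From condition (1) and the pointwise inequality
\[\sum_i\beta_i\alpha_i|f_i(\omega)|\chi_{U_i}(\omega) \leq h(\omega)\cdot\bigvee_j\alpha_j|f_j(\omega)|\chi_{U_j}(\omega),\]
which holds since each summand $\alpha_i|f_i(\omega)|\chi_{U_i}(\omega)$ is dominated by the max whenever $\omega \in U_i$, and the sum of $\beta_i$'s over $\{i : \omega \in U_i\}$ is bounded by $h(\omega)$, one obtains after integration the intermediate estimate
\[\sum_i\beta_i\alpha_is_i \leq M\Bigl(\sum_i\alpha_i^p\Bigr)^{1/p}\quad\text{for all }\alpha_i\geq 0.\]
The same argument applied to the truncations $|f_i|\chi_{U_i\cap\{|f_i|>t\}}$ (using that these are pointwise dominated by $|f_i|$, so condition (1) still applies to the corresponding maxima) produces a weak-type estimate on the joint level sets at each scale $t$. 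Integrating these estimates against $t^{q-1}\,dt$ and optimizing the threshold in Marcinkiewicz fashion yields $(*)$ with the sharp constant $\gamma_p$ emerging from the optimization.

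\textbf{Main obstacle.} The analytic heart is the sharp constant $\gamma_p = (1-1/p)^{1/p-1} = q^{1/q}$. A direct dualization of the intermediate estimate via H\"older's inequality gives only $\sum_i\beta_i^qs_i^q \leq M^q$, which is strictly weaker than $(*)$ because $\beta_i^q \leq \beta_i$ whenever $\beta_i \leq 1$; indeed, for $\beta_i = 1/n$ the resulting bound on $\sum_i\beta_is_i^q$ grows like $n^{q-1}$ rather than remaining uniform. Closing this gap requires the truncation/layer-cake optimization described above, which is the precise analytic content of Pisier's theorem and is what produces the constant $\gamma_p$. The minimax reduction in Step 1 is soft but demands some care because of the non-reflexivity of $L^1(\mu)$.
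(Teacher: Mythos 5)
You should first know that the paper contains no proof of this statement: immediately after Theorem \ref{t3.2} the authors write that it \emph{is} the implication (iii)$\implies$(i) of Pisier's Theorem 1.1 in \cite{Pisier}, so the only available comparison is with Pisier's original argument. Your Step 1 is sound and does match the soft half of that argument: the reduction via a minimax theorem on the (weak$^*$ compact) positive unit ball of $L^\infty(\mu)^*$, followed by a Yosida--Hewitt/Lebesgue decomposition to replace the extremizing functional by an absolutely continuous one, is standard and correct, and your intermediate estimate $\sum_i \beta_i\alpha_i s_i \leq M(\sum_i \alpha_i^p)^{1/p}$ (with $s_i = \int_{U_i}|f_i|\,d\mu$) is correctly derived, as is your observation that plain duality only yields $\sum_i \beta_i^q s_i^q \leq M^q$, which is too weak.

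The genuine gap is Step 2, i.e.\ exactly the analytic heart you defer to ``truncation/layer-cake optimization.'' What $(*)$ requires is $\sum_i\beta_i s_i^q \leq qM^q\|h\|_\infty$ (note $\gamma_p^q = q$), and by the layer-cake formula $\sum_i\beta_i s_i^q = q\int_0^\infty t^{q-1}B(t)\,dt$ with $B(t) = \sum_{\{i:\,s_i>t\}}\beta_i$. The most your truncation argument can deliver scale by scale is a weak-type bound $t^q B(t)\lesssim M^q$, and such a bound is not integrable against $t^{q-1}\,dt$: using $B(t)\leq\min\bigl(\sum_i\beta_i,\,CM^qt^{-q}\bigr)$ one gets
\[ q\int_0^\infty t^{q-1}B(t)\,dt \;\leq\; \Bigl(\sum_i\beta_i\Bigr)t_0^{\,q} \;+\; qCM^q\log\bigl(s_1/t_0\bigr), \]
and no choice of the threshold $t_0$ removes the logarithm, whose argument is unbounded over admissible configurations. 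This is the familiar failure of endpoint Marcinkiewicz interpolation (weak type $(q,q)$ at every scale never self-improves to strong type $(q,q)$), so ``integrating these estimates against $t^{q-1}\,dt$ and optimizing the threshold'' cannot prove $(*)$, let alone identify the sharp constant $q^{1/q}$. Closing the gap requires estimates that couple the scales, which is what Pisier's key lemma accomplishes by applying condition (1) with a carefully chosen \emph{non-constant} weight vector $(\alpha_i)$ adapted to the decreasing rearrangement of the $s_i$ and the cumulative weights $\sum_{i\leq k}\beta_i$; that lemma is precisely the content your sketch gestures at but does not supply, and your own ``Main obstacle'' paragraph in effect concedes this.
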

\Cref{t3.1} is very well-known. See, for example, \cite[Chapter 7]{AK}. \Cref{t3.2}  is the implication (iii)$\implies$(i) in \cite[Theorem 1.1]{Pisier}. Note that condition (2) in \Cref{t3.2} is equivalent to
\[ \bigl\|\frac{f}{g}\chi_U\|_{L^1 (g\cdot \mu)} \leq \gamma_p M\bigl((g\cdot\mu)(U)\bigr)^{1-\frac{1}{p}}.
\]
In particular, under the above conditions, $\bigl\|\frac{f}{g}\bigr\|_{L^{p,\infty} (g\cdot \mu)} \leq \gamma_p M$ for all $f\in A$.
\medskip

We will use the above factorization theorems to represent $p$-convex Banach lattices inside of infinity sums of $L^p$ spaces and Banach lattices with upper $p$-estimates inside of infinity sums of weak-$L^p$ spaces. Recall that, given a measure space $(\Omega, \Sigma,\mu)$, the space $L^{p,\infty}(\mu)$ is defined as the set of all measurable functions $h:\Omega\rightarrow \R$ such that the quasinorm
\[  \sup_{t>0} t \mu(\{|h|>t\})^{\frac{1}{p}}   \]
is finite. Note, in particular, that every $h\in  L^{p,\infty}(\mu)$ has $\sigma$-finite support. Unless otherwise specified, we will equip $L^{p,\infty}(\mu)$ with the norm
\[ \|h\|_{L^{p,\infty}} = \sup \{\mu(E)^{\frac{1}{p}-1}\int_E |h| d\mu: 0 < \mu(E) <\infty\}.\]
It is  a standard fact (cf.~\cite[Exercise 1.1.12]{GrafakosCFA}) that the above norm is equivalent to the usual quasinorm on weak-$L^p$; it can also be checked to be a lattice norm with upper-$p$ estimate constant one.
\medskip


\begin{prop}\label{p3.3}The following statements hold:
\begin{enumerate}
    \item Let $X$ be a $p$-convex Banach lattice with constant $M$.
There is a family $\Gamma$ of probability measures and a lattice isomorphism
\[
J : X\to \bigl(\oplus_{\mu\in\Gamma}L^p(\mu)\bigr)_{\infty}\]
such that $\|x\| \leq \|Jx\| \leq M\|x\|$ for all $x\in X$.
    \item Let $X$ be a $(p,\infty)$-convex Banach lattice with constant $M$.
There is a family $\Gamma$ of probability measures and a lattice isomorphism
\[ J : X\to \bigl(\oplus_{\mu\in \Gamma}L^{p,\infty}(\mu)\bigr)_{\infty}\]
such that $\|x\| \leq \|Jx\| \leq \gamma_p M\|x\|$ for all $x\in X$.
\end{enumerate}
\end{prop}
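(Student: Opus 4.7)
The plan is to construct $J$ as a diagonal map into an $\ell^\infty$-sum of factorization spaces, indexed by the positive functionals in $B_{X^*}$. For each positive $\phi\in B_{X^*}$, the formula $\rho_\phi(x):=\phi(|x|)$ defines an $L$-seminorm on $X$, so by Kakutani's representation the completion of $X/\ker\rho_\phi$ is identified with some $L^1(\mu_\phi)$. Let $q_\phi\colon X\to L^1(\mu_\phi)$ denote the induced contractive lattice homomorphism, which satisfies $\|q_\phi(x)\|_1=\phi(|x|)$. Being a lattice homomorphism, $q_\phi$ commutes with the functional calculus appearing in the definitions of $p$-convexity and upper $p$-estimates, so the relevant hypothesis on $X$ (with constant $M$) transfers directly to the corresponding estimate on $A:=q_\phi(B_X)\subseteq L^1(\mu_\phi)$.

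For part (1), the transferred estimate is exactly hypothesis (1) of \Cref{t3.1}. Maurey's theorem then produces $g_\phi\in L^1(\mu_\phi)_+$ with $\|g_\phi\|_1=1$, making $\nu_\phi:=g_\phi\cdot\mu_\phi$ a probability measure, and the map $J_\phi(x):=q_\phi(x)/g_\phi$ (defined on $\supp g_\phi$) satisfies $\|J_\phi(x)\|_{L^p(\nu_\phi)}\le M\|x\|$ for all $x\in X$. Since $J_\phi$ is the composition of $q_\phi$ with division by a strictly positive weight, it is a lattice homomorphism. Assembling $J=(J_\phi)_\phi$ over positive $\phi\in B_{X^*}$ gives a lattice homomorphism into $(\oplus_\phi L^p(\nu_\phi))_\infty$ with $\|Jx\|\le M\|x\|$. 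For the reverse inequality, given $x\in X$ I would use Hahn-Banach together with the Banach lattice structure of $X^*$ to pick a positive $\phi\in B_{X^*}$ with $\phi(|x|)=\|x\|$; then since $\nu_\phi$ is a probability measure, H\"older's inequality gives
\[\|J_\phi(x)\|_{L^p(\nu_\phi)}\ge \|J_\phi(x)\|_{L^1(\nu_\phi)}=\int|q_\phi(x)|\,d\mu_\phi=\phi(|x|)=\|x\|.\]

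Part (2) follows the same outline, now invoking Pisier's \Cref{t3.2}. The upper $p$-estimate yields hypothesis (1) of \Cref{t3.2} for $A=q_\phi(B_X)$, producing $g_\phi$ with $\|g_\phi\|_1\le 1$ and (by the remark following \Cref{t3.2}) $\|q_\phi(x)/g_\phi\|_{L^{p,\infty}(g_\phi\cdot\mu_\phi)}\le\gamma_p M\|x\|$. Since the exponent $1-\tfrac1p$ is positive and $\|g_\phi\|_1\le 1$, rescaling $g_\phi$ to have unit $L^1$-norm only strengthens the estimate, so we may assume $\nu_\phi$ is a probability measure. Setting $J=(J_\phi)_\phi$ and using the same norming $\phi$ together with the inequality $\|h\|_{L^{p,\infty}(\nu)}\ge\|h\|_{L^1(\nu)}$ valid on a probability space (immediate from the chosen representative of the $L^{p,\infty}$-norm) delivers $\|x\|\le\|Jx\|\le\gamma_p M\|x\|$.

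I do not anticipate a real obstacle here: once the seminorms $\rho_\phi$ are in place, the factorization theorems do all the heavy lifting and every remaining verification (the lattice-homomorphism property of division by $g_\phi$, the normalization to probability measures in part (2), and the lower bounds via domination by $L^1(\nu_\phi)$) is essentially routine. The one mild subtlety is to restrict throughout to $\supp g_\phi$ so that the codomain $L^p(\nu_\phi)$ or $L^{p,\infty}(\nu_\phi)$ is unambiguously defined.
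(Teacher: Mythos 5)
Your proposal is correct and follows essentially the same route as the paper: an $L$-seminorm from a positive norming functional, Kakutani representation onto $L^1(\mu_\phi)$, transfer of the $p$-convexity (resp.\ upper $p$-estimate) hypothesis along the contractive lattice homomorphism, Maurey (resp.\ Pisier) factorization, and a diagonal lattice embedding into the $\ell^\infty$-sum, with the lower bound obtained by dominating the $L^1(\nu_\phi)$-norm. The only differences are cosmetic (indexing by positive $\phi\in B_{X^*}$ rather than by points of $S_X^+$ with chosen norming functionals), and your rescaling of $g_\phi$ in part (2) is in fact slightly more careful than the paper's proof, which leaves the measures $g_x\cdot\mu_x$ with total mass $\leq 1$ even though the statement asserts probability measures.
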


\begin{proof}
Let $x\in S_X^+=\{z\in X_+:\|z\|=1\}$. Choose $x^*\in X^*_+$, $\|x^*\|=1,$ such that $x^*(x) = 1$.
Define $\rho_x:X\to \R$ by $\rho_x(z) = x^*(|z|)$.  Then $\rho_x$ is an $L$-norm on $X/\ker \rho_x$. Let $q_x: X\to X/\ker\rho_x$ be the quotient map.
There exist a probability measure $\mu_x$ and a contractive lattice homomorphism $i_x: X/\ker\rho_x\to L^1(\mu_x)$ such that $i_xq_xx = 1$, the constant $1$ function. 
Set $A_x = i_xq_x(B_X)\subseteq L^1(\mu_x)$.
Let $\al_i$ be a finitely supported real sequence and let $f_i = i_xq_xx_i$ for $x_i \in B_X$.
Since $i_xq_x$ is a lattice homomorphism,
\begin{align*}
\|(\sum |\al_if_i|^p)^{\frac{1}{p}}\|_{L^1(\mu_x)} & = \la (\sum |\al_ix_i|^p)^{\frac{1}{p}}, x^*\ra\leq \|(\sum |\al_ix_i|^p)^{\frac{1}{p}}\|\\
&\leq M(\sum|\al_i|^p)^{\frac{1}{p}} \text{ if $X$ is $p$-convex with constant $M$},
\\ \|\bigvee |\al_if_i|\|_{L^1(\mu_x)} & = \la \bigvee |\al_ix_i|, x^*\ra\leq \|\bigvee |\al_ix_i|\|\\
&\leq M(\sum|\al_i|^p)^{\frac{1}{p}} \text{ if $X$ is $(p,\infty)$-convex with constant $M$}.
\end{align*}
By the factorization theorems above, there exists $g_x\in L^1(\mu_x)_+$, $\|g_x\|_{L^1(\mu_x)}\leq 1$, such that for any $f\in A$ and any $\mu_x$ measurable set $U$,
\[ \bigl\|\frac{f}{g_x}\bigr\|_{L^p(g_x\cdot \mu_x)} \leq M,\ \text{respectively},\
\bigl\|\frac{f}{g_x} \bigr\|_{L^{p,\infty} (g_x\cdot \mu_x)} \leq \gamma_p M.\]
Define \[
J : X\to \bigl(\oplus_{x\in S_{X}^+}L^p(g_x\cdot\mu_x)\bigr)_{\infty}, \quad \text{respectively}\quad
J : X\to \bigl(\oplus_{x\in S_{X}^+}L^{p,\infty}(g_x\cdot\mu_x)\bigr)_{\infty}\]
by $Jz = (i_xq_x z/g_x)_x$.  It is easy to check that $J$ is a lattice homomorphism.
If $z\in B_X$, we have  $i_xq_xz\in A_x$.  Hence,
\[ \|i_xq_xz/g_x\|_{L^p(g_x\cdot\mu_x)} \leq M,\ \text{respectively}\  \|i_xq_xz/g_x\|_{L^{p,\infty}(g_x\cdot\mu_x)} \leq \gamma_p M.
\]
Thus, $\|Jz\| \leq M\|z\|$, respectively $\|Jz\| \leq \gamma_p M\|z\|$ for all $z\in X$.
\medskip

On the other hand, if $x\in S_X^+$, then in the $p$-convex case,
\[ \|Jx\| \geq  \|i_xq_xx/g_x\|_{L^p(g_x\cdot\mu_x)} \geq \|i_xq_xx/g_x\|_{L^1(g_x\cdot\mu_x)} \]
since $\|g_x\cdot \mu_x\|\leq 1$.
Therefore, $\|Jx\| \geq \|i_xq_xx\|_{L^1( \mu_x)}= x^*(x) =1$.
In the $(p,\infty)$-convex case, suppose that $\mu_x$ is a measure defined on $\Om_x$. Then
\[  \int  \frac{i_xq_xx}{g_x}\,d(g_x\cdot \mu_x) = \int  {i_xq_xx}\,d\mu_x= x^*(x) =1 \geq  \bigl((g_x\cdot \mu_x)(\Om_x)\bigr)^{1-\frac{1}{p}}.
\]
Hence, $\|Jx\|\geq \bigl\|\frac{i_xq_xx}{g_x}\bigr\|_{L^{p,\infty}(g_x\cdot\mu_x)} \geq 1$.
It follows that  $\|Jx\| \geq \|x\|$ for all $x\in X$ in both cases.
\end{proof}

Proposition \ref{p3.3} has the following interpretation.  Assume that $1< p< \infty$.
Let $\cC_p$ be the class of all $p$-convex Banach lattices with constant $1$ and let $\cX_p$ be the class of all $L^p(\mu)$ spaces for any measure $\mu$. 
Then $\cC_p \subseteq \ol{\cX}_p$.  Since the reverse inclusion is evident, we have $\cC_p = \ol{\cX}_p$.
By Corollary \ref{c2.2}, for any Banach space $E$, $\rho_{\cC_p} = \rho_{\cX_p}$ on $\FVL[E]$. Note further that, since the measures $\mu\in \Gamma$  in Proposition \ref{p3.3} are finite, we could restrict the class $\cX_p$ to the class of $L^p(\mu)$ spaces with finite measures (let us denote this class by $\cX^F_p$) and the equality of norms would still hold. Actually, it can be proved by a standard argument that $\cX^F_p\subseteq \cX_p\subseteq \ol\cX^F_p$. The following result gives an alternative method for computing $\rho_{\cC_p}$.

\begin{thm}\label{t3.4}
Let $E$ be a Banach space and let $1< p<\infty$.
The norm $\rho_{\cC_p}$ on $\FVL[E]$ is given by
\[ \rho_{\cC_p}(f)  = \rho_p(f) : = \sup \|(f(x^*_1),\dots,f(x^*_n))\|_p,
\]
where the supremum is taken over all finite collections $x^*_1,\dots, x^*_n\in B_{E^*}$ satisfying the constraint
$\sup_{x\in B_E}\|(x^*_1(x),\dots, x^*_n(x))\|_p \leq 1$.
\end{thm}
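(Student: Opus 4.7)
My plan is to combine the class-identification $\cC_p = \overline{\cX_p}$ recorded in the paragraph preceding the statement (so that $\rho_{\cC_p} = \rho_{\cX_p}$ on $\FVL[E]$ by Corollary \ref{c2.2}) with the r.i.-space formula of Theorem \ref{t2.5} applied to $L^p$ on a suitably large non-atomic $\sigma$-finite measure space. Since every $L^p(\mu') \in \cX_p$ embeds as a closed sublattice of a single sufficiently large $L^p(\Omega,\Sigma,\mu)$, we have $\cX_p \subseteq \overline{\{L^p(\mu)\}}$, and by Corollary \ref{c2.2} again it suffices to identify $\rho_{\{L^p(\mu)\}}$.

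For the inequality $\rho_p(f) \leq \rho_{\cC_p}(f)$, given $x_1^*, \dots, x_n^* \in E^*$ satisfying $\sup_{x \in B_E}\|(x_i^*(x))_{i=1}^n\|_p \leq 1$, I would consider $T: E \to \ell^p(n)$ defined by $Tx = (x_i^*(x))_{i=1}^n$. The constraint says $\|T\|\leq 1$, the target $\ell^p(n)$ is obviously in $\cC_p$, and by the uniqueness of the lattice-homomorphic extension, the $i$-th coordinate of $\wh T$ is simply evaluation at $x_i^*$. Hence $\wh T f = (f(x_i^*))_{i=1}^n$ and $\|(f(x_i^*))_i\|_p = \|\wh T f\|_{\ell^p(n)} \leq \rho_{\cC_p}(f)$, giving the desired bound after a supremum.

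For the reverse inequality, I would apply Theorem \ref{t2.5} with $X = L^p(\mu)$ non-atomic and $\sigma$-finite. Since $\mu(V_i) = 1/r$ by the construction of $\rho_{mr}$,
\[\rho_{mr}(a_1, \dots, a_m) = \Bigl\|\sum_{i=1}^m a_i \chi_{V_i}\Bigr\|_{L^p(\mu)} = r^{-1/p}\|a\|_p,\]
so the formula of Theorem \ref{t2.5} reads
\[\rho_{\cC_p}(f) = \sup\bigl\{r^{-1/p}\|(f(x_i^*))_{i=1}^m\|_p : (m,r) \in I,\ \sup_{x \in B_E}\|(x_i^*(x))_{i=1}^m\|_p \leq r^{1/p}\bigr\}.\]
Setting $y_i^* = r^{-1/p}x_i^*$ forces $\sup_{x \in B_E}\|(y_i^*(x))_i\|_p \leq 1$ (in particular $y_i^* \in B_{E^*}$), while the positive homogeneity of $f$ as a function on $E^*$ yields $r^{-1/p}\|(f(x_i^*))_i\|_p = \|(f(y_i^*))_i\|_p$. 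The two factors of $r^{1/p}$ cancel and the right-hand side becomes precisely the defining supremum of $\rho_p(f)$; conversely, any $(y_i^*)$ admissible for $\rho_p(f)$ is realized by taking $r = m$ and $x_i^* = r^{1/p} y_i^*$, which always satisfies $(m,r) \in I$ once $\mu(\Omega) \geq 1$.

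The main obstacle is the reduction to a single non-atomic $L^p(\mu)$ at the outset: Proposition \ref{p3.3} produces a family of probability measures of possibly uncountable cardinality, so some care is needed to fit them all inside a common non-atomic $L^p(\Omega,\Sigma,\mu)$ before invoking Theorem \ref{t2.5}. An alternative avoiding this step is to run the Lemma \ref{l2.4}/conditional-expectation argument directly inside each $L^p(\mu')$ coming from Proposition \ref{p3.3}: approximating $\wh T f$ by $\sum_i f(T^*\chi_{U_i})\chi_{U_i}/\mu'(U_i)$ and renormalizing via $z_i^* = (T^*\chi_{U_i})/\mu'(U_i)^{1-1/p}$, one checks using Jensen's inequality on $|\int_{U_i} Tx\, d\mu'|^p$ that $\sup_{x\in B_E}\|(z_i^*(x))_i\|_p \leq \|Tx\|_{L^p(\mu')} \leq 1$, and this delivers the bound by $\rho_p(f)$ directly without ever leaving the given $L^p(\mu')$.
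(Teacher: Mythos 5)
Your proposal is correct in substance and, at its core, follows the same route as the paper: identify $\rho_{\cC_p}$ with the norm associated to a class of $L^p$ spaces via Proposition \ref{p3.3} and Corollary \ref{c2.2}, and then extract the formula $\rho_p$ from Theorem \ref{t2.5} by the rescaling $y_i^* = r^{-1/p}x_i^*$ — which is exactly the computation the paper leaves implicit in the line ``$\rho_{\{L^p(\mu)\}}(f)=\rho_p(f)$''. Your direct verification of $\rho_p \leq \rho_{\cC_p}$ via the contraction $T\colon x\mapsto (x_i^*(x))_{i=1}^n\in \ell^p(n)$, using $\pi_i\wh{T}=\widehat{\pi_i T}$ and the fact that the homomorphic extension of a functional $x_i^*$ is evaluation at $x_i^*$, is also sound, and slightly more elementary than recovering this direction from the first half of Theorem \ref{t2.5}.

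The one genuine misstep is your opening reduction: the claim that every $L^p(\mu')\in\cX_p$ embeds as a sublattice of ``a single sufficiently large $L^p(\Omega,\Sigma,\mu)$'' is untenable as stated, since $\cX_p$ contains $L^p$ spaces of arbitrarily large density character, so no single $L^p(\mu)$ can serve; nor is it clear that $\cX_p\subseteq\overline{\{L^p(\mu)\}}$ for a fixed $\mu$ (it is doubtful, for instance, that $L^p(\{0,1\}^\kappa)$ for large $\kappa$ lattice embeds into an $\ell^\infty$-sum of copies of $L^p[0,1]$, as lattice homomorphisms, unlike conditional expectations, cannot ``restrict coordinates''). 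You flag this obstacle yourself, but note how the paper sidesteps it entirely: since the measures produced by Proposition \ref{p3.3} are probability measures, one first replaces $\cX_p$ by the class $\cX^F_p$ of $L^p$ spaces over \emph{finite} measures; each such space embeds lattice isometrically into some non-atomic finite $L^p(\mu)$ (e.g.\ by multiplying with Lebesgue measure on $[0,1]$), and one then takes the supremum over this whole family of measure spaces, observing that Theorem \ref{t2.5} yields the \emph{same} formula $\rho_p$ for each of them — no common ambient space is needed, only the sandwich $\rho_{\cX^F_p}=\sup_X\rho_{\{X\}}\leq \sup_\mu \rho_{\{L^p(\mu)\}} = \rho_p \leq \rho_{\cX^F_p}$. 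Your fallback — running the Lemma \ref{l2.4} conditional-expectation argument directly inside each $L^p(\mu')$ from Proposition \ref{p3.3}, with $z_i^* = T^*\chi_{U_i}/\mu'(U_i)^{1-1/p}$ and Jensen's inequality giving $\sup_{x\in B_E}\|(z_i^*(x))_i\|_p\leq 1$ — is likewise valid, and amounts to reproving the hard half of Theorem \ref{t2.5} specialised to $L^p$, where it even simplifies: the passage to subsets $V_i\subseteq U_i$ of rational measure and equal-measure decompositions becomes unnecessary because the $L^p$ norm of a step function is computed directly. Either repair closes the gap, so the proof stands.
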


\begin{proof}
As observed above, $\rho_{\cC_p} = \rho_{\cX_p}=\rho_{\cX^F_p}$, where $\cX^F_p$ is the class of all $L^p(\mu)$ spaces with finite $\mu$.
If $X\in \cX^F_p$ there exists a non-atomic finite measure $\mu$ so that $X$ is lattice isometric to a sublattice of $L^p(\mu)$. Thus, $\rho_{\{X\}} \leq \rho_{\{L^p(\mu)\}}$, where the two norms here are the ones associated to the classes with single elements, as indicated.
By Theorem \ref{t2.5},
$\rho_{\{L^p(\mu)\}}(f)  = \rho_p(f)$.
Finally, note that
\[ \rho_{\cX^F_p} =\sup_{X\in \cX^F_p}\rho_{\{X\}}\leq \sup_{\substack{\mu \text{ non-atomic}\\ \text{finite}}} \rho_{\{L^p(\mu)\}} = \rho_p \leq \rho_{\cX^F_p}.\]
Hence, $\rho_{\cC_p} = \rho_p$, as claimed.
\end{proof}

Let $\cC_{p,\infty}$ be the class of all $(p,\infty)$-convex Banach lattices with constant $1$ and let $\cX^\sigma_{p,\infty}$ be the class of all $L^{p,\infty}(\mu)$ spaces for any $\sigma$-finite measure $\mu$. Similarly to \Cref{t3.4}, we have the following result.

\begin{thm}\label{t3.5}
Let $E$ be a Banach space and let $1< p<\infty$.
The norm $\rho_{\cX^\sigma_{p,\infty}}$ on $\FVL[E]$ is given by
\[ \rho_{\cX^\sigma_{p,\infty}}(f)  = \rho_{p,\infty}(f) : = \sup \|(f(x^*_1),\dots,f(x^*_n))\|_{p,\infty},
\]
where the supremum is taken over all finite collections $x^*_1,\dots, x^*_n\in B_{E^*}$ satisfying the constraint
$\sup_{x\in B_E}\|(x^*_1(x),\dots, x^*_n(x))\|_{p,\infty} \leq 1$.
Moreover, $\rho_{p,\infty} \leq \rho_{\cC_{p,\infty}} \leq \gamma_p\,\rho_{p,\infty}$, where $\gamma_p =  (1-\frac{1}{p})^{\frac{1}{p}-1}$.
\end{thm}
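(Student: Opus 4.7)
The proof follows the pattern of \Cref{t3.4}, with \Cref{t3.2} (Pisier) replacing Maurey's theorem and forcing the factor $\gamma_p$ in the upper bound.

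\smallskip
\noindent\emph{First assertion.} The plan is to apply \Cref{t2.5} to the r.i.\ Banach function space $X_0 := L^{p,\infty}(\R,\lambda)$, where $\lambda$ is Lebesgue measure; this is valid because $\lambda$ is non-atomic and $\sigma$-finite and weak-$L^p$ (with the norm fixed in the excerpt) is an r.i.\ Banach function space with the Fatou property. Since $\lambda(\R) = \infty$, the index set $I$ in \Cref{t2.5} is all of $\N^2$. For $(n,r) \in I$ and disjoint measurable sets $V_1, \ldots, V_n \subseteq \R$ with $\lambda(V_i) = 1/r$, a direct computation (reducing the supremum defining the $L^{p,\infty}$ norm of a step function constant on equimeasurable sets to integer unions of the $V_i$, by monotonicity) yields
\[
\rho_{nr}(a_1, \ldots, a_n) = r^{-1/p}\,\|(a_1, \ldots, a_n)\|_{\ell^{p,\infty}(n)}.
\]
Substituting this into the expression for $\rho$ from \Cref{t2.5} and performing the change of variables $y_i^* := r^{-1/p} x_i^*$ (legitimate since $r^{-1/p} \leq 1$ and elements of $\FVL[E]$ are positively homogeneous of degree one), both the objective and the constraint shed the factor $r^{-1/p}$, and the supremum collapses to $\rho_{p,\infty}(f)$. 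Hence $\rho_{\{X_0\}} = \rho_{p,\infty}$ on $\FVL[E]$. A standard atom-splitting / measure-preserving argument shows that every $L^{p,\infty}(\mu)$ with $\sigma$-finite $\mu$ embeds lattice-isometrically into $X_0$, so $\cX^\sigma_{p,\infty} \subseteq \overline{\{X_0\}}$; by \Cref{c2.2},
\[
\rho_{\cX^\sigma_{p,\infty}} = \rho_{\overline{\{X_0\}}} = \rho_{\{X_0\}} = \rho_{p,\infty}.
\]

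\smallskip
\noindent\emph{Second assertion.} Since every member of $\cX^\sigma_{p,\infty}$ is $(p,\infty)$-convex with constant $1$, one has $\cX^\sigma_{p,\infty} \subseteq \cC_{p,\infty}$, and therefore $\rho_{p,\infty} = \rho_{\cX^\sigma_{p,\infty}} \leq \rho_{\cC_{p,\infty}}$. For the upper bound, fix $X \in \cC_{p,\infty}$ and a linear contraction $T : E \to X$. By \Cref{p3.3}(2) there is a lattice isomorphism $J : X \to Y := (\oplus_{\mu \in \Gamma} L^{p,\infty}(\mu))_\infty$ with $\|x\| \leq \|Jx\| \leq \gamma_p\|x\|$. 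In particular, $Y \in \overline{\cX^\sigma_{p,\infty}}$ and $\|JT\| \leq \gamma_p$, so \Cref{p2.1} applied to $JT$ gives
\[
\|\widehat{JT}f\| \leq \gamma_p\,\rho_{\cX^\sigma_{p,\infty}}(f) = \gamma_p\,\rho_{p,\infty}(f)
\]
for all $f \in \FVL[E]$. By uniqueness of the lattice extension, $\widehat{JT} = J\widehat{T}$, and the estimate $\|J^{-1}\| \leq 1$ yields $\|\widehat{T}f\| \leq \|\widehat{JT}f\| \leq \gamma_p\,\rho_{p,\infty}(f)$. Taking the supremum over $T$ and $X$ gives $\rho_{\cC_{p,\infty}} \leq \gamma_p\,\rho_{p,\infty}$.

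\smallskip
\noindent\emph{Main obstacle.} The only genuinely non-mechanical step is the explicit evaluation of $\rho_{nr}$ for weak-$L^p$: one must check that, for a step function constant on the equimeasurable sets $V_i$, the supremum defining the $L^{p,\infty}$ norm is attained on unions of the $V_i$ rather than on fractional parts of them. Beyond this verification, the argument is a direct combination of the general machinery of \Cref{s2} with Pisier's factorization theorem.
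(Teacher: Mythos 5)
Your overall architecture coincides with the paper's: the paper proves the first assertion ``similarly to \Cref{t3.4}'', i.e.\ by applying \Cref{t2.5} to weak-$L^p$ spaces over non-atomic $\sigma$-finite measures, computing $\rho_{mr}(a)=r^{-1/p}\|a\|_{\ell^{p,\infty}(n)}$, and rescaling via positive homogeneity; the second assertion is obtained from \Cref{p3.3} and \Cref{p2.1} exactly as in your write-up (the paper normalizes $\gamma_p^{-1}JT$ to a contraction, which is the same estimate). Your proof of the second assertion is correct, and your evaluation of $\rho_{nr}$ is true, but the justification ``by monotonicity'' is not quite right: for fixed total mass $s=\sum_i\mu(E\cap V_i)$ the optimal choice gives a piecewise linear concave envelope $g$, and on each linear piece the function $s\mapsto s^{\frac1p-1}(c+ds)$ has derivative $s^{\frac1p-2}\bigl((\tfrac1p-1)c+\tfrac1p ds\bigr)$, which changes sign at most once, from negative to positive; so the maximum occurs at an endpoint $s=k/r$. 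It is this quasiconvexity on each piece, not monotonicity, that reduces the supremum to integer unions of the $V_i$.

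The one genuine flaw is the reduction to the single space $X_0=L^{p,\infty}(\R,\lambda)$. The claim that every $L^{p,\infty}(\mu)$ with $\sigma$-finite $\mu$ lattice-isometrically embeds into $X_0$ by a ``standard atom-splitting / measure-preserving argument'' fails as stated: measure-preserving embeddings into $(\R,\lambda)$ exist only for separable (countably generated modulo null sets) measure algebras, whereas $\cX^\sigma_{p,\infty}$ contains, for instance, $L^{p,\infty}$ of the product measure on $\{0,1\}^{\omega_1}$, a probability (hence $\sigma$-finite) space whose measure algebra is non-separable and admits no measure-preserving embedding into Lebesgue measure. The gap is repairable in two ways. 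You could localize: any $\widehat{T}f$ lies in the sublattice generated by finitely many functions, which is measurable with respect to a countably generated sub-$\sigma$-algebra, and since the norm is determined by distribution functions it is unchanged under this restriction, so the supremum defining $\rho_{\cX^\sigma_{p,\infty}}$ only sees separable pieces --- but this argument is absent from your write-up. More simply, do what the paper does: atom-split each $\mu$ into a non-atomic $\sigma$-finite measure, apply \Cref{t2.5} to each such measure separately, and observe that the resulting formula $\rho_{\{L^{p,\infty}(\mu)\}}=\rho_{p,\infty}$ is independent of $\mu$, so that $\rho_{\cX^\sigma_{p,\infty}}=\sup_\mu\rho_{\{L^{p,\infty}(\mu)\}}=\rho_{p,\infty}$ with no universal space needed. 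With that repair your proof is correct and is essentially the paper's.
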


\begin{proof}
The proof that $\rho_{\cX^\sigma_{p,\infty}} = \rho_{p,\infty}$ is similar to the proof of Theorem \ref{t3.4}.
Since $\cX^\sigma_{p,\infty} \subseteq \cC_{p,\infty}$, $ \rho_{\cX^\sigma_{p,\infty}} \leq \rho_{\cC_{p,\infty}}$.
On the other hand, let $X\in \cC_{p,\infty}$.  By Proposition \ref{p3.3}, there exists $Y\in \ol{\cX}^\sigma_{p,\infty}$ and a lattice isomorphism $J:X\to Y$ so that $\|x\| \leq \|Jx\| \leq \gamma_p\|x\|$ for all $x\in X$.
Let $T:E\to X$ be a linear contraction.  Then $\gamma_p^{-1}JT: E \to Y$ is a linear contraction.
Since $J$ is a lattice homomorphism, $\widehat{\gamma_p^{-1}JT} = \gamma_p^{-1}J\wh{T}$.
For any $f\in \FVL[E]$,
\[ \|\wh{T}f\| \leq \gamma_p\|\gamma_p^{-1}J\wh{T}f\| = \gamma_p\|\widehat{\gamma_p^{-1}JT}f\|\leq \gamma_p\rho_{\cX^\sigma_{p,\infty}}(f).
\]
Taking supremum over all linear contractions $T:E\to X$  for any $X\in \cC_{p,\infty}$ shows that $\rho_{\cC_{p,\infty}}\leq \gamma_p \rho_{\cX^\sigma_{p,\infty}}$.
\end{proof}

Observe that Theorems \ref{t3.4} and \ref{t3.5} say that $\rho_{\cC_p} =\rho_{\{\ell^p\}}$ and that
$\rho_{\{\ell^{p,\infty}\}} \leq \rho_{\cC_{p,\infty}} \leq \gamma_p\,\rho_{\{\ell^{p,\infty}\}}$.
As a result, $\fbl^{\cC_p}[E] = \fbl^{\{\ell^p\}}[E]$ and  $\fbl^{\cC_{p,\infty}}[E] = \fbl^{\{\ell^{p,\infty}\}}[E]$
and it is clear that both of these spaces are vector sublattices of $C(B_{E^*})$.
The proof works in exactly the same way if $p=1$, in which case we get that $\cC_1$, the class of all ($1$-convex) Banach lattices, is equal to $\ol{\cX}_1$, where $\cX_1$ consists of all $L^1(\mu)$ spaces.
The Banach lattice $\fbl^{\cC_1}[E]$ is the free Banach lattice generated by $E$, which is usually denoted by $\fbl[E]$.
From the above, we see that $\rho_{\cC_1} = \rho_{\cX_1} = \rho_{\{\ell^1\}}$.\medskip

\begin{remark}
    In this subsection we have made heavy use of factorization theorems through $L^p$ and weak-$L^p$ to identify the norm of the associated free spaces as a type of nonlinear weak-strong summing norm. We note that there is a large literature (see \cite{MR4081126, MR3712577, MR3011260,MR3248477} and related papers of these authors) on factorization theory and generalized summing operators  on  Banach function spaces, which may prove to be useful in the study of $\fbl^\mathcal{C}[E].$ There are also several  papers \cite{MR0599147, MR2591637, MR0604702,MR0613037,MR1333528,MR3900037} which study  factorizations of $p$-convex and $q$-concave operators. Although we will not do so here, it is certainly possible to extend some of these results to   other convexity/concavity conditions, such as $(p,\infty)$-convexity and $(q,1)$-concavity.
    \end{remark}

\subsection{Solution to Problem \ref{pr2.4} for the classes \texorpdfstring{$\cC_p$}{} and \texorpdfstring{$\cC_{p,\infty}$}{}.}\label{Solution to C=Cbar}
Let $\cC_p$ and $\cC_{p,\infty}$ be the class of $p$-convex Banach lattices and the class of Banach lattices satisfying an upper $p$-estimate with constant $1$, respectively.  
 We will characterize the classes of Banach lattices $\cD$ so that $(\fbl^{\cC_p}[E],\rho_{\cC_p})$ and $(\fbl^\cD[E], \rho_\cD)$ agree for any Banach space $E$ and also the classes $\cD'$ so that $(\fbl^{\cC_{p,\infty}}[E],\rho_{\cC_{p,\infty}})$ and $(\fbl^{\cD'}[E], \rho_{\cD'})$ are lattice isomorphic.
\medskip

For the proofs of the above claims, we need an extension of the isometric lattice-lifting property which may  be of independent interest. A Banach lattice $X$ is said to have the \emph{isometric lattice-lifting property} if there exists a lattice isometric embedding $\alpha$ from $X$ to $\fbl[X]$  such that $\wh{id_X}\alpha=id_X$. This property was introduced in \cite{AMRT}, inspired by previous works of Godefroy and Kalton \cite{GK} on Lipschitz-free spaces. Notably, Banach lattices ordered by a $1$-unconditional basis have this property.
\medskip

In \cite[Theorem 8.3]{OTTT} an alternative proof of the lattice lifting property for spaces  ordered by a $1$-unconditional basis was given, which also worked with $\fbl[E]$ replaced by $\fbp[E]$. We now show how to generalize these results to $\fbl^\cC[E]$. 

\begin{prop}\label{lattice lifting property}
    Let $X\in \ol{\cC}$ be a Banach space with a normalized 1-unconditional basis $(e_i)$, viewed as a Banach lattice in the pointwise order induced by the basis. Then $\fbl^\cC[X]$ contains a lattice isometric copy of $X$. Moreover, there exists a contractive lattice homomorphic projection onto this sublattice.
\end{prop}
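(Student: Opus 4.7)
The plan is to construct a linear isometric lattice homomorphic embedding $\alpha:X\hookrightarrow\fbl^{\cC}[X]$ which is a right inverse of $\widehat{id_X}:\fbl^{\cC}[X]\to X$. Since $X\in\ol{\cC}$, \Cref{p2.1} provides $\widehat{id_X}$ as a contractive lattice homomorphism. Once $\alpha$ is built, $P:=\alpha\,\widehat{id_X}$ is the desired projection: it is contractive and a lattice homomorphism because both factors are, and $P^2=\alpha(\widehat{id_X}\alpha)\widehat{id_X}=\alpha\,\widehat{id_X}=P$, with range $\alpha(X)$.

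For the construction of $\alpha$, I would first handle the finite-dimensional case. Let $X_n:=\spn(e_1,\ldots,e_n)$ and set
\[
f_i^{(n)}:=\Bigl(\delta_{e_i}-\bigvee_{\substack{1\le j\le n\\ j\ne i}}|\delta_{e_j}|\Bigr)^+\in\FVL[X_n]\subseteq\fbl^{\cC}[X_n].
\]
Viewed as functions on $B_{X_n^*}$, these are pairwise disjoint positive elements. The 1-unconditionality of $(e_i)$ makes the basis vectors themselves pairwise disjoint and positive in the pointwise lattice on $X_n$, so applying $\widehat{id_{X_n}}$ yields $\widehat{id_{X_n}}(f_i^{(n)})=(e_i-\bigvee_{j\ne i}e_j)^+=e_i$. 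Setting $\alpha_n(\sum x_ie_i):=\sum x_if_i^{(n)}$ then produces a lattice homomorphism---being a linear combination of pairwise disjoint positives---and clearly $\widehat{id_{X_n}}\alpha_n=id_{X_n}$.

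The main technical step is to verify that $\alpha_n$ is an isometry. The inequality $\|\alpha_n(x)\|\ge\|x\|$ is automatic from contractivity of $\widehat{id_{X_n}}$. For the reverse, one invokes the universal property: given a contraction $T:X_n\to Y\in\cC$, the elements $h_i:=\widehat T(f_i^{(n)})$ are pairwise disjoint positives in $Y$ satisfying $h_i\le |Te_i|$, so the task reduces to bounding $\|\sum x_ih_i\|_Y\le \|x\|_{X_n}$. This is where I expect the main difficulty to lie: the idea is that the map $e_i\mapsto h_i$ should be a contractive lattice homomorphism $X_n\to Y$ by virtue of 1-unconditionality and the way the $h_i$'s have been ``peaked off'' from the $Te_i$'s. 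For the specific classes $\cC\in\{\cC_p,\cC_{p,\infty}\}$ relevant to \Cref{Solution to C=Cbar}, I would lean on the explicit norm formulas of \Cref{t3.4} and \Cref{t3.5}, likely treating the $p$-convex and upper-$p$-estimate cases separately.

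For the passage to infinite dimensions, the basis projections $P_n:X\to X_n$ are contractive lattice homomorphisms by 1-unconditionality, and the inclusions $\iota_n:X_n\hookrightarrow X$ are isometries. These induce $\widehat{P_n}:\fbl^{\cC}[X]\to\fbl^{\cC}[X_n]$ and $\widehat{\iota_n}:\fbl^{\cC}[X_n]\to\fbl^{\cC}[X]$, with $\widehat{P_n}\widehat{\iota_n}=id_{\fbl^{\cC}[X_n]}$ by uniqueness of the extension; it follows that $\widehat{\iota_n}$ is an isometric lattice embedding. Inside $\fbl^{\cC}[X]$, the sequence $(f_i^{(n)})_n$ is decreasing, and if one can show it is Cauchy in norm, its limit $f_i\in\fbl^{\cC}[X]$ will retain the required disjointness and positivity, allowing one to define $\alpha(\sum x_ie_i):=\sum x_if_i$ globally. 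The isometry and the identity $\widehat{id_X}\alpha=id_X$ should then pass to the limit by continuity together with the density of finitely supported vectors in $X$. Establishing the Cauchyness of $(f_i^{(n)})_n$ is another technical obstacle, likely requiring a direct estimate using the specific features of the norm on $\fbl^{\cC}[X]$.
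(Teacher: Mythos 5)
Your overall frame---construct a lattice homomorphic isometric right inverse $\alpha$ of $\wh{id_X}$ and take $P=\alpha\,\wh{id_X}$---is exactly the paper's, and your projection argument is correct. But where the paper obtains $\alpha$ in two lines, you attempt to rebuild it from scratch, and the two steps you yourself flag as the main difficulties are genuine gaps, not routine verifications. First, the isometry of $\alpha_n$: the claim that $e_i\mapsto h_i=(Te_i-\bigvee_{j\neq i}|Te_j|)^+$ is contractive does not follow from $1$-unconditionality. Since $T$ is merely linear (not a lattice homomorphism), disjointness of the $h_i$ only gives $\|\sum_i x_ih_i\|=\|\bigvee_i|x_i|h_i\|$ with $h_i\le|Te_i|$, and there is no mechanism to dominate this by $\sup_{\ep_i=\pm1}\|T(\sum_i\ep_ix_ie_i)\|$: on the band carrying $h_i$ one only knows $|Te_j|<Te_i$ for $j\neq i$, so the interference $\sum_{j\neq i}|x_j|\,|Te_j|$ can be of order $(n-1)\max_j|x_j|\cdot Te_i$. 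This is precisely why peak constructions of this type are run with a large truncation constant $K$ in place of $1$ and then yield only $(1+\ep)$-embeddings in each fixed dimension. Your proposed fallback---invoking the norm formulas of \Cref{t3.4} and \Cref{t3.5}---cannot repair this for the isometric statement: \Cref{t3.5} identifies $\rho_{\cC_{p,\infty}}$ only up to the constant $\gamma_p$, and \Cref{Dist norm} shows this loss is unavoidable, so that route can at best produce a $\gamma_p$-isomorphic copy, not the lattice isometric copy the proposition asserts (and it also abandons the stated generality of an arbitrary class $\cC$ with $X\in\ol{\cC}$).

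Second, the passage to infinite dimensions is not merely ``another technical obstacle'': decreasing positive sequences in a Banach lattice need not be norm Cauchy (no order continuity is available here), and for a general class $\cC$ you have no functional representation of $\fbl^\cC[X]$ in which to realize the limit $f_i$ pointwise---obtaining such a representation is one of the main goals of the paper, so it cannot be presupposed. The paper sidesteps both problems at once: since every Banach lattice is $1$-convex with constant one, $\cC\subseteq\cC_1$, hence $\rho_\cC\le\rho_{\cC_1}$ on $\FVL[X]$ and the formal identity extends to a norm-one lattice homomorphism $j:\fbl^{\cC_1}[X]\to\fbl^\cC[X]$; the hard construction of an isometric section $\alpha:X\to\fbl^{\cC_1}[X]$ with $\wh{id_X}\alpha=id_X$ is quoted from \cite[Theorem 4.1]{AMRT} or \cite[Theorem 8.3]{OTTT} (the case $p=1$), and the sandwich
\[
\|x\|_X=\|\wh{id_X}\,j\alpha x\|\le\rho_{\cC}(j\alpha x)\le\rho_{\cC_1}(\alpha x)\le\|x\|_X,
\]
valid because $\|\wh{id_X}\|\le1$ by \Cref{p2.1} (here the hypothesis $X\in\ol{\cC}$ enters), upgrades $j\alpha$ to a lattice isometry without any formula for $\rho_\cC$. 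As written, your proposal leaves the central estimates unproved, and the specific tools you suggest for them are either insufficient (isometry versus $\gamma_p$-equivalence) or unavailable (norm convergence of the peaks); to make it self-contained you would essentially have to reprove the construction of \cite{AMRT,OTTT} rather than the $K=1$ peak argument.
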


\begin{proof}
  Using either \cite[Theorem 4.1]{AMRT} or \cite[Theorem 8.3]{OTTT} with $p=1$ we  obtain a lattice isometric embedding $\alpha:X\rightarrow \fbl^{\cC_1}[X]$. Since $\cC\subseteq \cC_1$, it follows that $\rho_\cC(f)\leq \rho_{\cC_1}(f)$ for every $f\in \FVL[X]$, so the formal identity extends to a norm one lattice homomorphism $j:\fbl^{\cC_1}[X]\rightarrow \fbl^\cC[X]$. On the other hand, since $X\in \ol\cC$, the identity operator on $X$ extends to a contractive lattice homomorphism $\wh{id_X}:\fbl^\cC[X]\rightarrow X$. It can be checked from the construction of $\alpha$ that $\wh{id_X}j\alpha=id_X$, from which it follows that
  \[\|x\|_X=\|\wh{id_X}j\alpha x\|\leq \rho_{\cC}(j\alpha x)\leq \rho_{\cC_1}(\alpha x)\leq \|x\|_X\]
  for every $x\in X$.
\end{proof}

\begin{thm}\label{t3.6}
The following are equivalent for a class of Banach lattices $\cD$.
\begin{enumerate}
\item For any Banach space $E$, $\fbl^{\cC_p}[E] = \fbl^\cD[E]$ as sets and the norms $\rho_{\cC_p}$ and $\rho_\cD$ agree there.
\item $\cD \subseteq \cC_p$ and $\ell^p \in \ol{\cD}$.
\end{enumerate}
\end{thm}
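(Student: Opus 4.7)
My plan is to reduce everything to the associated norms $\rho_\cC$ on $\FVL[E]$, leveraging two ingredients already proved: \Cref{t3.4}, which identifies $\rho_{\cC_p} = \rho_{\{\ell^p\}}$, and \Cref{c2.2}, which gives $\rho_\cD = \rho_{\ol{\cD}}$. The direction $(2) \Rightarrow (1)$ will then be a routine chain of inequalities; the substantive work lies in $(1) \Rightarrow (2)$, where extracting $\ell^p \in \ol{\cD}$ from an abstract norm equality is the main obstacle. I intend to handle this obstacle via the lattice-lifting property \Cref{lattice lifting property}.

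For $(2) \Rightarrow (1)$ I would simply observe: the inclusion $\cD \subseteq \cC_p$ yields $\rho_\cD \leq \rho_{\cC_p}$ directly from the definition of $\rho_{(\cdot)}$, and conversely $\rho_{\cC_p} = \rho_{\{\ell^p\}} \leq \rho_{\ol{\cD}} = \rho_\cD$, where the first equality is \Cref{t3.4}, the middle inequality uses the hypothesis $\ell^p \in \ol\cD$, and the last is \Cref{c2.2}. Hence $\rho_\cD$ and $\rho_{\cC_p}$ coincide on $\FVL[E]$ for every Banach space $E$.

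For $(1) \Rightarrow (2)$ I would split into the two assertions. To prove $\cD \subseteq \cC_p$, first observe that $\cC_p$ is stable under closed sublattices and $\ell_\infty$-sums, so $\cC_p = \ol{\cC_p}$; \Cref{p2.2} then says each $\fbl^{\cC_p}[E]$ is $p$-convex with constant $1$. Given $X \in \cD$, I would specialize to $E = X$ and use the contractive lattice homomorphism $\wh{id_X}: \fbl^\cD[X] \to X$ extending the identity. Since $\fbl^\cD[X] = \fbl^{\cC_p}[X]$ is $p$-convex with constant $1$ and $\wh{id_X}$ commutes with Krivine's functional calculus, evaluating $\wh{id_X}$ at $\bigl(\sum|\phi_X(x_i)|^p\bigr)^{1/p}$ and using $\wh{id_X}\phi_X = id_X$ transports the $p$-convexity estimate from $\fbl^\cD[X]$ down to $X$. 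To prove $\ell^p \in \ol{\cD}$, I would apply \Cref{lattice lifting property} with $\cC = \cC_p$ and $X = \ell^p$ (which lies in $\cC_p = \ol{\cC_p}$ and carries a $1$-unconditional basis): this produces a lattice isometric copy of $\ell^p$ inside $\fbl^{\cC_p}[\ell^p] = \fbl^\cD[\ell^p]$; by \Cref{p2.2}, $\fbl^\cD[\ell^p] \in \ol{\cD}$, and a routine double-$\ell_\infty$-sum reindexing confirms $\ol{\ol{\cD}} = \ol{\cD}$, so $\ell^p \in \ol{\cD}$.
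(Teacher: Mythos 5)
Your proposal is correct, and three of its four steps coincide with the paper's own proof: the direction (2)$\Rightarrow$(1) is the identical chain $\rho_\cD \leq \rho_{\cC_p} = \rho_{\{\ell^p\}} \leq \rho_{\ol{\cD}} = \rho_\cD$ via \Cref{t3.4} and \Cref{c2.2}, and the extraction of $\ell^p \in \ol{\cD}$ is exactly the paper's: apply \Cref{lattice lifting property} with $\cC = \cC_p$ (legitimate since $\ell^p \in \cC_p = \ol{\cC}_p$ and has a $1$-unconditional basis), transfer the lattice isometric copy into $\fbl^\cD[\ell^p]$ via hypothesis (1) with $E = \ell^p$, and conclude with \Cref{p2.2} together with the easily checked stability $\ol{\ol{\cD}} = \ol{\cD}$. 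Where you genuinely diverge is the step $\cD \subseteq \cC_p$. The paper takes $X \in \cD$, passes to the quotient $\fbl^\cD[X]/\ker \wh{i}$ (where $\wh{i}$ is the contractive homomorphic extension of the identity), notes this quotient is $p$-convex with constant one because $\cC_p$ is closed under lattice quotients, and then verifies that $j[f] = \wh{i}f$ is a surjective lattice isometry onto $X$, using $x = j[\delta_x]$ and $\|j\| \leq 1$ to see that $j$ carries the unit ball onto $B_X$. You instead transport the defining inequality directly: since $\wh{id_X}$ is a contractive lattice homomorphism and bounded lattice homomorphisms commute with Krivine's functional calculus,
\[ \Bigl\|\bigl(\sum_i |x_i|^p\bigr)^{\frac{1}{p}}\Bigr\|_X = \Bigl\|\wh{id_X}\bigl(\sum_i|\delta_{x_i}|^p\bigr)^{\frac{1}{p}}\Bigr\| \leq \Bigl\|\bigl(\sum_i|\delta_{x_i}|^p\bigr)^{\frac{1}{p}}\Bigr\|_{\fbl^{\cC_p}[X]} \leq \bigl(\sum_i\|x_i\|^p\bigr)^{\frac{1}{p}},\]
the last step by \eqref{P-convex def} with constant one, valid since $\fbl^\cD[X] = \fbl^{\cC_p}[X] \in \ol{\cC}_p = \cC_p$ by \Cref{p2.2}. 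Both routes are valid. Yours is more direct, avoiding the quotient-stability check and the onto-isometry bookkeeping; the paper's quotient argument proves the formally stronger fact that $X$ is lattice isometric to a quotient of its free lattice and is insensitive to how the defining property of the class is expressed, which is why it transfers verbatim to \Cref{t3.7} for $\cC_{p,\infty}$. Your functional-calculus variant also adapts to that setting, but one must then replace $(\sum|\cdot|^p)^{1/p}$ by $\bigvee|\cdot|$ and invoke the reformulation \eqref{upper p}, since upper $p$-estimates are stated only for disjoint vectors and the $\delta_{x_i}$ need not be disjoint.
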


\begin{proof}
(1)$\implies$(2):  By  \Cref{lattice lifting property}, $\fbl^{\cD}[\ell^p] = \fbl^{\cC_p}[\ell^p]$ contains a lattice isometric copy of $\ell^p$. By Proposition \ref{p2.2}, $\fbl^\cD[\ell^p]\in \ol{\cD}$.
Hence, $\ell^p\in \ol{\cD}$.
\medskip

Suppose that $X\in \cD$.  Let $\vp_X:X\to \fbl^{\cD}[X]$ be the canonical embedding.
The identity $i: X\to X$ induces a lattice homomorphic contraction $\wh{i}:\fbl^\cD[X] \to X$ so that $\wh{i}\vp_X = i$.
Since $\fbl^\cD[X] = \fbl^{\cC_p}[X]$ lattice isometrically, this space belongs to $\ol{\cC}_p = \cC_p$ by Proposition \ref{p2.2}.
Hence, so does $\fbl^{\cD}[X]/\ker \wh{i}$.
Define $j: \fbl^\cD[X]/\ker \wh{i} \to X$  by $j[f] = \wh{i}f$, where $[f]$ is the equivalence class of $f$.
The map $j$ is a lattice isomorphism so that $\|j\| \leq 1$ and $x = j[\delta_x]$ for any $x\in X$.
Hence, $B_X = j(B_{\fbl^\cD[X]/\ker \wh{i}})$.  This shows that $j$ is an onto lattice isometry.
Therefore, $X\in \cC_p$.

\medskip

\noindent (2)$\implies$(1):
Since $\cD\subseteq \cC_p$, for any Banach space $E$, $\rho_\cD \leq \rho_{\cC_p}$ on $\FVL[E]$.
Similarly, since $\ell^p\in \ol{\cD}$, $\rho_{\{\ell^p\}}\leq \rho_{\ol{\cD}}$.
However, $\rho_{\cC_p} = \rho_{\{\ell^p\}}$ and $ \rho_{\ol{\cD}} = \rho_\cD$ by Corollary \ref{c2.2}.
Therefore, we have the reverse inequality $\rho_{\cC_p}\leq \rho_\cD$.
\end{proof}

For the case of $\cC_{p,\infty}$, the following result can be proven in  essentially the same way as  \Cref{t3.6}. The only difference is that, since $\ell^{p,\infty}$ does not have a basis,  \Cref{lattice lifting property} cannot be applied directly. However, we can still use \Cref{lattice lifting property} to embed $\ell^{p,\infty}$ lattice isometrically into $(\bigoplus_m\fbl^{\cC_{p,\infty}}[\ell^{p,\infty}(m)])_\infty$ and continue with the rest of the proof with  slight adaptations.
\begin{thm}\label{t3.7}
The following are equivalent for a class of Banach lattices $\cD$.
\begin{enumerate}
\item There is a constant $M_1 <\infty$ so that for any Banach space $E$, $\fbl^{\cC_{p,\infty}}[E] = \fbl^\cD[E]$ as sets and  $\frac{1}{M_1}\rho_\cD \leq \rho_{\cC_{p,\infty}}\leq M_1\rho_\cD$ there.
\item There is a constant $M_2 <\infty$ so that any $X\in \cD$ satisfies an upper $p$-estimate with constant $M_2$, and
 $\ell^{p,\infty}$ is $M_2$-lattice isomorphic to a space $Y \in \ol{\cD}$.
\end{enumerate}
\end{thm}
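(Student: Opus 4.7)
The plan is to mirror the proof of \Cref{t3.6}, with one key modification: $\ell^{p,\infty}$ lacks a $1$-unconditional basis, so \Cref{lattice lifting property} cannot be invoked directly. Instead, I will apply it to the finite-dimensional truncations $\ell^{p,\infty}(m)$, which do have a $1$-unconditional basis, and assemble the resulting embeddings via an $\ell^\infty$-direct sum.

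For (1)$\Rightarrow$(2), I first obtain the uniform upper $p$-estimate on members of $\cD$. Given $X\in\cD$, the identity on $X$ extends to a contractive lattice homomorphism $\wh{\mathrm{id}_X}:\fbl^\cD[X]\to X$, and the quotient argument from \Cref{t3.6}, which uses $\rho_\cD(\delta_x)=\|x\|$, yields a lattice isometry $\fbl^\cD[X]/\ker\wh{\mathrm{id}_X}\cong X$. By hypothesis, $\fbl^\cD[X]$ coincides with $\fbl^{\cC_{p,\infty}}[X]$ as a set under $M_1$-equivalent norms, and by \Cref{p2.2} the latter lies in $\overline{\cC_{p,\infty}}=\cC_{p,\infty}$. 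Thus its upper $p$-estimate constant is $1$ under $\rho_{\cC_{p,\infty}}$ and at most $M_1^2$ under $\rho_\cD$. Since upper $p$-estimates pass to quotients by closed ideals, $X$ has upper $p$-estimate constant at most $M_1^2$.

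Next, I produce the embedding of $\ell^{p,\infty}$ into a member of $\overline{\cD}$. Each $\ell^{p,\infty}(m)$ has a $1$-unconditional basis and belongs to $\cC_{p,\infty}$, so \Cref{lattice lifting property} supplies a lattice isometric embedding $\alpha_m:\ell^{p,\infty}(m)\hookrightarrow\fbl^{\cC_{p,\infty}}[\ell^{p,\infty}(m)]$. The restriction map $x\mapsto(x|_{\{1,\ldots,m\}})_m$ embeds $\ell^{p,\infty}$ lattice isometrically into $(\bigoplus_m\ell^{p,\infty}(m))_\infty$; composing with $(\alpha_m)$ gives a lattice isometric embedding
\[
\ell^{p,\infty}\hookrightarrow\Bigl(\bigoplus_m\fbl^{\cC_{p,\infty}}[\ell^{p,\infty}(m)]\Bigr)_\infty.
\]
By (1), the formal identity is an $M_1$-lattice isomorphism onto $(\bigoplus_m\fbl^\cD[\ell^{p,\infty}(m)])_\infty$, which lies in $\overline{\cD}$ by \Cref{p2.2} and the closure of $\overline{\cD}$ under $\ell^\infty$-sums and sublattices. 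Hence $\ell^{p,\infty}$ is $M_1$-lattice isomorphic to an element of $\overline{\cD}$, and taking $M_2=M_1^2$ completes (2).

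For (2)$\Rightarrow$(1), I invoke the classical renorming cited after \eqref{upper p}: each $X\in\cD$ admits a lattice norm $|||\cdot|||$ with $(X,|||\cdot|||)\in\cC_{p,\infty}$, equivalent to the original norm with constant controlled by $M_2$. Any contraction $T:E\to X$ rescales to a contraction into $(X,|||\cdot|||)$, and extending to $\wh T$ yields $\rho_\cD(f)\lesssim M_2\,\rho_{\cC_{p,\infty}}(f)$. Conversely, let $U:\ell^{p,\infty}\to Y$ be an $M_2$-lattice isomorphism with $Y\in\overline{\cD}$, normalized so that $\|U^{-1}\|=1$ and $\|U\|\leq M_2$. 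For any contraction $T:E\to\ell^{p,\infty}$, the composition $M_2^{-1}UT:E\to Y$ is a contraction into $\overline{\cD}$, and using $U\wh T=\wh{UT}$ (valid since $U$ is a lattice homomorphism) one deduces $\|\wh Tf\|\leq M_2\,\rho_\cD(f)$. Taking supremum over $T$ gives $\rho_{\{\ell^{p,\infty}\}}\leq M_2\,\rho_\cD$, and combining with $\rho_{\cC_{p,\infty}}\leq\gamma_p\,\rho_{\{\ell^{p,\infty}\}}$ from \Cref{t3.5} closes the argument. The main obstacle is navigating the lack of a $1$-unconditional basis in $\ell^{p,\infty}$; this is bypassed by the diagonal embedding described above. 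The rest of the argument follows \Cref{t3.6} verbatim, tracking constants to get $M_1\lesssim\gamma_p M_2$ and $M_2\leq M_1^2$.
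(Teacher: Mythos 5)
Your proposal is correct and follows essentially the same route the paper indicates: it mirrors the proof of \Cref{t3.6}, replacing the direct appeal to \Cref{lattice lifting property} by applying it to the truncations $\ell^{p,\infty}(m)$ and embedding $\ell^{p,\infty}$ lattice isometrically into $(\bigoplus_m\fbl^{\cC_{p,\infty}}[\ell^{p,\infty}(m)])_\infty$ via the restriction map, which is exactly the adaptation the authors describe. Your constant tracking ($M_2\leq M_1^2$ via the quotient argument, and $M_1\leq\gamma_p M_2$ via the renorming and the bound $\rho_{\cC_{p,\infty}}\leq\gamma_p\,\rho_{\{\ell^{p,\infty}\}}$ from \Cref{t3.5}) is a sound filling-in of the ``slight adaptations'' left implicit in the paper.
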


\section{Distinguishing \texorpdfstring{$\rho_{\cX_{p,\infty}}$}{} and \texorpdfstring{$\rho_{\cC_{p,\infty}}$}{}}\label{Dist norm}

A natural question that arises from Proposition \ref{p3.3} and Theorem \ref{t3.5}
is  whether the constant $\gamma_p$ can be improved to $1$.
In this section, we will show that the answer is negative in general.  In particular, there is a Banach space $E$ so that $\rho_{\cC_{p,\infty}}\neq \rho_{\cX_{p,\infty}}$ on $\FVL[E]$.
\medskip

Let $X, Y$ be Banach lattices and let $1\leq C<\infty$.  We say that $X$ $C$-lattice embeds into $Y$ if there exists a lattice isomorphic injection $T:X\to Y$ such that $C^{-1}\|x\| \leq \|Tx\| \leq \|x\|$ for all $x\in X$.
In this case, we call $T$ a $C$-lattice embedding.  If $C =1$, then $T$ is a lattice isometric embedding.
A Banach space with a normalized $1$-unconditional basis is regarded as a Banach lattice in the coordinate-wise order.

\begin{thm}\label{t4.1}
Let $X$ be a Banach lattice with a normalized $1$-unconditional basis $(e_i)$.
Denote the biorthogonal functionals by $(e_i^*)$.  For any $C\geq 1$, the following statements are equivalent.
\begin{enumerate}
\item For each normalized finitely supported $a = \sum^n_{i=1}a_ie_i\in X_+$ and every $\ep >0$, there exist $b = \sum^n_{i=1}b_ie^*_i\in X^*_+$ and $d = (d_i)^n_{i=1}\in \R^n_+$ so that $\sum^n_{i=1}a_ib_i > 1-\ep$,
$\sum^n_{i=1}d_i=1$
and  $\|\sum_{i\in I}b_ie^*_i\|^{p'} \leq C^{p'}\sum_{i\in I}d_i$ for any $I \subseteq \{1,\dots,n\}$.
\item There is a set $\Gamma$ of (probability) measures such that $X$ $C$-lattice  embeds into $(\bigoplus_{\mu\in \Gamma}L^{p,\infty}(\mu))_\infty$.
\end{enumerate}
\end{thm}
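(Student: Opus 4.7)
The plan is to prove the two implications separately, using the weak-$L^p$ norm formula $\|h\|_{L^{p,\infty}(\mu)} = \sup\{\mu(E)^{1/p-1}\int_E|h|\,d\mu:\,0<\mu(E)<\infty\}$ recorded earlier in the paper, together with the elementary fact that a normalized $1$-unconditional basis consists of pairwise disjoint positive elements.

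For the direction $(2)\Rightarrow(1)$, fix a $C$-lattice embedding $T:X\to\bigl(\bigoplus_{\gamma\in\Gamma}L^{p,\infty}(\mu_\gamma)\bigr)_\infty$, a normalized $a=\sum_{i=1}^n a_ie_i\in X_+$, and $\ep>0$. Since $\|Ta\|\geq 1/C$, one can choose an index $\gamma$ and a measurable set $E$ so that $\mu_\gamma(E)^{1/p-1}\int_E(Ta)_\gamma\,d\mu_\gamma$ lies within $\ep/(2C)$ of $1/C$. Define $b\in X^*_+$ by $b(x):=C\mu_\gamma(E)^{1/p-1}\int_E(Tx)_\gamma\,d\mu_\gamma$; the scaling by $C$ yields $\sum_{i=1}^n a_ib_i>1-\ep$. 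Because the $e_i$ are pairwise disjoint and $T$ is a lattice homomorphism, the functions $g_i:=(Te_i)_\gamma$ are disjointly supported. Setting $F_i:=E\cap\supp g_i$ and $d_i:=\mu_\gamma(F_i)/\mu_\gamma(E)$, the disjointness of the $F_i$ gives $\sum_id_i\leq 1$, with any deficit absorbed into a single coordinate. The key computation, for positive $x\in B_X$, is
\[\sum_{i\in I}b_ix_i = C\mu_\gamma(E)^{1/p-1}\int_{\cup_{i\in I}F_i}(Tx)_\gamma\,d\mu_\gamma \leq C\mu_\gamma(E)^{1/p-1}\mu_\gamma\Bigl(\bigcup_{i\in I}F_i\Bigr)^{1/p'}\|(Tx)_\gamma\|_{L^{p,\infty}},\]
which is bounded by $C(\sum_{i\in I}d_i)^{1/p'}$, and raising to the $p'$-th power gives $\|\sum_{i\in I}b_ie_i^*\|^{p'}\leq C^{p'}\sum_{i\in I}d_i$. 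Replacing $b$ by its image under the band projection $P_n$ puts it in the span of $e_1^*,\dots,e_n^*$ without affecting any of these estimates.

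For the direction $(1)\Rightarrow(2)$, to each normalized finitely supported $a\in X_+$ and each $\ep>0$ attach a pair $(b^{a,\ep},d^{a,\ep})$ supplied by (1). Viewing $d^{a,\ep}$ as a probability measure on $\{1,\dots,n_a\}$, define a linear map $T^{a,\ep}:X\to L^{p,\infty}(d^{a,\ep})$ by $T^{a,\ep}e_i=(b_i^{a,\ep}/d_i^{a,\ep})\chi_{\{i\}}$ for $i\leq n_a$ and $T^{a,\ep}e_i=0$ otherwise, using the convention $0/0=0$ (legitimate because the estimate in (1) forces $b_i^{a,\ep}=0$ whenever $d_i^{a,\ep}=0$). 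Since disjoint basis vectors go to disjointly supported indicators, $T^{a,\ep}$ is a lattice homomorphism. A direct computation gives
\[\|T^{a,\ep}x\|_{L^{p,\infty}(d^{a,\ep})} = \sup_{I\subseteq\{1,\dots,n_a\}}\Bigl(\sum_{i\in I}d_i^{a,\ep}\Bigr)^{-1/p'}\sum_{i\in I}|x_i|b_i^{a,\ep},\]
which is at most $C\|x\|_X$ by condition (1) applied termwise to the dual norm of $\sum_{i\in I}b_i^{a,\ep}e_i^*$. Simultaneously $\|T^{a,\ep}a\|\geq\sum a_ib_i^{a,\ep}>1-\ep$. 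Assembling $T=(T^{a,\ep}/C)_{(a,\ep)}$ into $\bigl(\bigoplus L^{p,\infty}(d^{a,\ep})\bigr)_\infty$ yields a lattice homomorphism of norm at most $1$; the lower bound $\|Tx\|\geq\|x\|/C$ follows by approximating $|x|$ by its truncations $P_m|x|/\|P_m|x|\|$ and letting $\ep\to 0$, using continuity of $T$.

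The main subtlety is the delicate balancing of scalings in the $(2)\Rightarrow(1)$ direction: the factor $C$ inserted into $b$ to secure $\sum a_ib_i>1-\ep$ propagates into exactly the $C^{p'}$ on the right side of the Pisier-type inequality, and this works only because $(1/p-1)p'=-1$ converts the leftover $\mu_\gamma(E)^{-1}$ into the normalization of the $d_i$. Identifying disjointness of the $g_i$ (forced by $1$-unconditionality of the basis) is the crucial ingredient that makes the subset identity $\mu_\gamma(\bigcup_{i\in I}F_i)=\sum_{i\in I}\mu_\gamma(F_i)$ available, which in turn is what lets the weighted weak-$L^p$ test set $E$ produce a genuine measure $d$ on the index set.
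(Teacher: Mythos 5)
Your proposal is correct and follows essentially the same route as the paper: in $(1)\Rightarrow(2)$ you build, for each pair $(a,\ep)$, a lattice homomorphism into a weak-$L^p$ space over a measure with atoms of mass $d_i$ (the paper uses a partition of $(0,1)$ with $\lambda(U_i)=d_i$, which is the same construction), and in $(2)\Rightarrow(1)$ you extract $b$ and $d$ from a near-optimal component $\gamma$ and test set $E$, using disjointness of the $(Te_i)_\gamma$ exactly as the paper does with $U_i=U\cap\supp f_i$. The only cosmetic deviations are your normalization by $\mu_\gamma(E)$ with the deficit in $\sum d_i$ absorbed into one coordinate, where the paper renormalizes by $m=(\sum_i\mu_0(U_i))^{1/p'}$ to get $\sum d_i=1$ exactly; both are valid.
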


\begin{proof}
(1)$\implies$(2):
Let $a\in X_+$ be normalized and finitely supported, and let $\ep >0$ be given.  Obtain  $b$ and $d$ from (1). Let $(U_i)^n_{i=1}$ be a measurable partition of $(0,1)$ such that $\lambda(U_i) = d_i$, $1\leq i\leq n$, where $\lambda$ denotes the Lebesgue measure.
Define
\[ S_{a,\ep}: X\to L^{p,\infty}(0,1) \text{  by } S_{a,\ep}(\sum^\infty_{i=1}c_ie_i) = \sum^n_{i=1}\frac{c_ib_i\chi_{U_i}}{d_i}.\]
We have
\[ \|S_{a,\ep} a\| \geq \lambda(0,1)^{-\frac{1}{p'}}\int^1_0|S_{a,\ep} a|\,d\lambda = \sum^n_{i=1}a_ib_i > 1-\ep.\]
On the other hand, if $\|\sum^\infty_{i=1}c_ie_i\|\leq1$ then there exists $I \subseteq \{1,\dots, n\}$ so that
\begin{align*}
 \|S_{a,\ep} c\| &= \lambda (\bigcup_{i\in I}U_i)^{-\frac{1}{p'}} \int_{\bigcup_{i\in I}U_i}|S_{a,\ep}c|\,d\lambda=(\sum_{i\in I}d_i)^{-\frac{1}{p'}}\sum_{i\in I}|c_i|b_i\\& = (\sum_{i\in I}d_i)^{-\frac{1}{p'}}\la \sum^\infty_{i=1}|c_i|e_i, \sum_{i\in I}b_ie^*_i\ra
\leq (\sum_{i\in I}d_i)^{-\frac{1}{p'}}\|\sum_{i\in I}b_ie^*_i\| \leq C.
\end{align*}
Let $\Gamma$ be the Cartesian product of the set of normalized finitely supported elements in $X_+$ and the interval $(0,\frac{1}{2})$.
It is now clear that the map $T:X\to (\bigoplus_{(a,\ep)\in \Gamma}L^{p,\infty}(0,1))_\infty$
given by
\[ T(\sum^\infty_{i=1}c_ie_i) = (C^{-1}S_{a,\ep}c)_{(a,\ep)\in \Gamma}\]
is a $C$-lattice embedding.

\bigskip

\noindent (2)$\implies$(1):  Let $T:X\to  (\bigoplus_{\mu\in \Gamma}L^{p,\infty}(\mu))_\infty$ be a $C$-lattice  embedding.
Suppose that $a = \sum^n_{i=1}a_ie_i$ is normalized  and $\ep > 0$.
For each $\mu_0\in \Gamma$, let $P_{\mu_0}$ be the projection of  $(\bigoplus_{\mu\in \Gamma}L^{p,\infty}(\mu))_\infty$ onto the $\mu_0$-th component.
There exists $\mu_0$ such that $C\|P_{\mu_0}Ta\| > 1-\ep$.
Let  $f_i = P_{\mu_0}Te_i.$
Choose a $\mu_0$-measurable set $U$ such that $0 < \mu_0(U) < \infty$ and
\[ C\int_U P_{\mu_0}Ta\,d\mu_0 > (1-\ep)(\mu_0(U))^{\frac{1}{p'}}.\]
Set $U_i = U \cap \supp f_i$ and $m = (\sum^n_{i=1}\mu_0(U_i))^{\frac{1}{p'}}$. Define
\[ b_i = \frac{C\int_{U_i}f_i\,d\mu_0}{m}, \quad d_i = \frac{\mu_0(U_i)}{m^{p'}}, \quad 1\leq i\leq n.\]
We have $d_i \geq 0$, $\sum^n_{i=1}d_i =1$ and
\[
 \sum^n_{i=1}a_ib_i  = \frac{C}{m}\int_{\bigcup^n_{i=1}U_i}\sum^n_{i=1}a_if_i\,d\mu_0 =
 \frac{C}{m}\int_{\bigcup^n_{i=1}U_i}P_{\mu_0}Ta > 1-\ep.
\]
On the other hand, suppose that $I\subseteq \{1,\dots, n\}$. If $\|\sum^\infty_{i=1}c_ie_i\| \leq 1$, then
\begin{align*}
\la \sum^\infty_{i=1}c_ie_i, \sum_{i\in I}b_ie^*_i\ra & = \sum_{i\in I}c_ib_i = \frac{C}{m}\int_{\bigcup_{i\in I}U_i}\sum_{i\in I}c_if_i\,d\mu_0\\
& \leq  \frac{C}{m}\|P_{\mu_0}T(\sum^\infty_{i=1}c_ie_i)\|\,(\mu_0(\bigcup_{i\in I}U_i))^{\frac{1}{p'}}
\leq C(\sum_{i\in I}d_i)^{\frac{1}{p'}}.
\end{align*}
This proves that $\|\sum_{i\in I}b_ie_i^*\|^{p'} \leq C^{p'}\sum_{i\in I}d_i$.
\end{proof}

\begin{remark}
If $X$ is a two dimensional Banach lattice that satisfies an upper $p$-estimate with constant one, then $C=1$ works in \Cref{t4.1}.
Indeed, let $(e_1,e_2)$ be a normalized $1$-unconditional basis for $X$.  Suppose that $a= a_1e_1 + a_2e_2$ is normalized and non-negative.   Choose $b = b_1e_1^* + b_2e^*_2\in X^*_+$ such that $\la a,b\ra =1 = \|b\|$. Define $d_i = b_i^{p'}/(b_1^{p'} + b_2^{p'})$, $i=1,2$.
Note that $X^*$ satisfies a lower $p'$-estimate with constant one and $e_1^*$ and $e_2^*$ are disjoint, so
\[ b_1^{p'} + b_2^{p'} = \|b_1e^*_1\|^{p'} + \|b_2e^*_2\|^{p'} \leq \|b\|^{p'} =1.\]
It is easy to check that $\|\sum_{i\in I}b_ie^*_i\|^{p'} \leq \sum_{i\in I}d_i$ for any $I \subseteq \{1,2\}$ and $d_1 + d_2 = 1$.
\end{remark}

\begin{cor}\label{c4.2}
Suppose that $X$ is a  Banach lattice with a normalized  $1$-unconditional basis $(e_i)$ that $C$-lattice embeds into $(\bigoplus_{\mu\in \Gamma}L^{p,\infty}(\mu))_\infty$.
For any normalized $b = \sum^n_{i=1}b_ie^*_i \in X^*_+$ and any  $I_j\subseteq \{1,\dots, n\}$, $1\leq j\leq m$, such  that $\sum^m_{j=1}\chi_{I_j} = l\chi_{\{1,\dots,n\}}$ for some $l\in \N$, we have
\[ \sum_{j=1}^m\|\sum_{i\in I_j}b_ie^*_i\|^{p'} \leq C^{p'}l.\]
\end{cor}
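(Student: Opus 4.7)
The plan is to reduce Corollary~\ref{c4.2} to Theorem~\ref{t4.1}(1) by applying the latter to a carefully chosen positive element $\bar a\in X_+$ whose pairing against $b$ realizes the quantity we want to bound. Write $S=\sum_{j=1}^m\|b|_{I_j}\|^{p'}$, which we may assume is strictly positive.

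Fix $\varepsilon>0$. For each $j$ choose $a^j\in X_+$ finitely supported in $I_j$ with $\|a^j\|\leq 1$ and $\langle a^j,b\rangle\geq (1-\varepsilon)\|b|_{I_j}\|$. The key move is then to form the weighted combination
\[
\bar a=\sum_{j=1}^m\lambda_j a^j\in X_+,\qquad \lambda_j=\|b|_{I_j}\|^{p'-1},
\]
which is finitely supported in $\{1,\dots,n\}$. A direct computation gives $\langle\bar a,b\rangle\geq(1-\varepsilon)\sum_j\lambda_j\|b|_{I_j}\|=(1-\varepsilon)S$, and since $\|b\|=1$ this forces $\|\bar a\|\geq(1-\varepsilon)S$. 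Now the $C$-lattice embedding hypothesis makes Theorem~\ref{t4.1}(1) available: applied to the normalized element $\bar a/\|\bar a\|$, it produces $\beta\in X^*_+$ and $d\in\R^n_+$ with $\sum_i d_i=1$, $\|\beta|_I\|^{p'}\leq C^{p'}\sum_{i\in I}d_i$ for every $I\subseteq\{1,\dots,n\}$, and $\langle\bar a,\beta\rangle\geq(1-\varepsilon)\|\bar a\|\geq(1-\varepsilon)^2S$.

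To bound $\langle\bar a,\beta\rangle$ from above, note that since $a^j$ is supported in $I_j$ with $\|a^j\|\leq 1$ we have $\langle a^j,\beta\rangle\leq\|\beta|_{I_j}\|\leq C\bigl(\sum_{i\in I_j}d_i\bigr)^{1/p'}$. Expanding $\bar a$ and applying H\"older's inequality with exponents $p,p'$ together with the identity $(p'-1)p=p'$ yields
\[
\langle\bar a,\beta\rangle\leq C\sum_j\lambda_j\Bigl(\sum_{i\in I_j}d_i\Bigr)^{1/p'}\leq C\Bigl(\sum_j\|b|_{I_j}\|^{p'}\Bigr)^{1/p}\Bigl(\sum_j\sum_{i\in I_j}d_i\Bigr)^{1/p'}=C\,S^{1/p}\,l^{1/p'},
\]
where the covering identity $\sum_j\chi_{I_j}=l\chi_{\{1,\dots,n\}}$ contracts the double sum to $l\sum_i d_i=l$. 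Combining with the lower bound gives $(1-\varepsilon)^2 S\leq CS^{1/p}l^{1/p'}$, which, using $1-1/p=1/p'$, rearranges to $S\leq C^{p'}l/(1-\varepsilon)^{2p'}$; letting $\varepsilon\to 0$ completes the proof.

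The only real subtlety lies in choosing the weights $\lambda_j$: they are ``self-dual'' in the sense that $\sum_j\lambda_j\|b|_{I_j}\|=S$ and $\sum_j\lambda_j^p=S$ simultaneously, so that H\"older against the covering measure produces exactly the factor $l^{1/p'}$ with the correct exponent $C^{p'}$ on the embedding constant. The corresponding identity $(p'-1)p=p'$ is what balances the two sides.
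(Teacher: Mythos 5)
Your proof is correct, and it takes a genuinely different route from the paper's. The paper argues by perturbation: choosing a normalized $a\in X_+$ with $\langle a,b\rangle=1$, it first assumes $[(e_i)_{i=1}^n]$ carries a uniformly convex lattice norm, so that the functional $b'$ produced by Theorem~\ref{t4.1}(1) is forced to be $\varepsilon$-close to the given $b$; the estimate $\|\sum_{i\in I}b_ie_i^*\|\leq \varepsilon\cdot\#I+C(\sum_{i\in I}d_i)^{1/p'}$ is thereby transferred to $b$ for \emph{every} $I$ simultaneously, summed over the cover via the triangle inequality in $\ell^{p'}(m)$, the error term $\varepsilon nl$ is removed by letting $\varepsilon\downarrow 0$, and the general case needs a further renorming step with a nearby uniformly convex norm and $(1+\varepsilon)$-factors. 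You never compare $b$ with the functional from Theorem~\ref{t4.1} at all: instead you encode $S=\sum_j\|\sum_{i\in I_j}b_ie_i^*\|^{p'}$ into a single test vector $\bar a=\sum_j\lambda_ja^j$ with self-dual weights $\lambda_j=\|\sum_{i\in I_j}b_ie_i^*\|^{p'-1}$ (your choice of block-supported almost-norming $a^j\in X_+$ is legitimate, though worth a line of justification: $1$-unconditionality makes the band projection onto $[(e_i)_{i\in I_j}]$ contractive, and a positive functional is almost normed by positive vectors), apply the theorem once to $\bar a/\|\bar a\|$, and close the loop with H\"older, the covering hypothesis entering only to collapse $\sum_j\sum_{i\in I_j}d_i$ to $l\sum_i d_i=l$. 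All the computations check out, including the exponent identity $(p'-1)p=p'$, the lower bound $\|\bar a\|\geq\langle\bar a,b\rangle\geq(1-\varepsilon)S$ from $\|b\|=1$, and the final rearrangement $S^{1/p'}\leq Cl^{1/p'}(1-\varepsilon)^{-2}$. Your argument is shorter and more elementary --- no uniform convexity, no renorming, no additive error bookkeeping --- and it isolates exactly how the cover is used; what the paper's route buys in exchange is the stronger intermediate information that (after smoothing the norm) the functional produced by Theorem~\ref{t4.1} can be taken norm-close to the given $b$ itself, yielding the blockwise estimate for all subsets $I$ at once rather than only in the summed form required by the corollary.
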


\begin{proof}
 Choose a normalized $a = \sum^n_{i=1}a_ie_i\in X_+$ so that $\sum^n_{i=1}a_ib_i =1$.  First assume that $[(e_i)^n_{i=1}]$ is uniformly convex.
Let $\ep > 0$ be given. There exists $\delta >0$ so that if $b' = \sum^n_{i=1}b'_ie^*_i\in X^*_+$, $\|b'\|=1$
and $\la a, b'\ra > 1-\delta$, then $\|b-b'\| < \ep$.
By \Cref{t4.1}, there exists $b'\in X^*_+$ and $d = (d_i)^n_{i=1}\in \R^n_+$ so that $\la a,b'\ra > 1-\delta$ and  $\|\sum_{i\in I}b_i'e_i^*\|^{p'} \leq C^{p'}\sum_{i\in I}d_i$ for any $I\subseteq \{1,\dots,n\}$.
It follows that for any $I\subseteq \{1,\dots, n\}$,
\[ \|\sum_{i\in I}b_ie_i^*\|  \leq  \|\sum_{i\in I}|b_i-b_i'|e_i^*\|+ \|\sum_{i\in I}b'_ie_i^*\| \leq \ep\cdot \#I + C(\sum_{i\in I}d_i)^{\frac{1}{p'}}.
\]
Now assume that $I_j\subseteq \{1,\dots, n\}$, $1\leq j\leq m$, and that $\sum^m_{j=1}\chi_{I_j} = l\chi_{\{1,\dots,n\}}$ for some $l\in \N$.
Set $A_j = \ep\cdot \#I_j$ and $B_j = C(\sum_{i\in I_j}d_i)^{\frac{1}{p'}}$.  Then
\begin{align*}(\sum_{j=1}^m&\|\sum_{i\in I_j}b_ie^*_i\|^{p'})^{\frac{1}{p'}}\leq (\sum^m_{j=1}(A_j+ B_j)^{p'})^{\frac{1}{p'}}
\leq (\sum^m_{j=1}A_j^{p'})^{\frac{1}{p'}} + (\sum^m_{j=1}B_j^{p'})^{\frac{1}{p'}} \\
& \leq \sum^m_{j=1}A_j + (\sum^m_{j=1}B_j^{p'})^{\frac{1}{p'}}
= \ep\sum^m_{j=1}\# I_j  + C(\sum^m_{j=1}\sum_{i\in I_j}d_i)^{\frac{1}{p'}}
\leq \ep nl + Cl^{\frac{1}{p'}}.
\end{align*}
Taking $\ep \downarrow 0$ gives the desired result.
\medskip

We now return to a general $X$ with a normalized $1$-unconditional basis $(e_i)$.  Given a normalized $b = \sum^n_{i=1}b_ie^*_i \in X^*_+$ and  $\ep > 0$, there is a uniformly convex lattice norm $|||\cdot|||$ on $[(e_i)^n_{i=1}]$ such that
$\|x\| \leq |||x||| \leq (1+\ep)\|x\|$ for any $x\in [(e_i)^n_{i=1}]$.
Set $u_i = e_i/|||e_i|||$, $u_i^* = e_i^*/|||e_i^*|||$ and $b' = \sum^n_{i=1}b_i'u^*_i = b/|||b|||$.
Note that $(u_i)^n_{i=1}$ is a $1$-unconditional basis for $[(e_i)^n_{i=1}] =   [(u_i)^n_{i=1}]$ and $b'$ is $|||\cdot|||$-normalized.   Moreover, if $T: [(e_i)^n_{i=1}]\to (\bigoplus_{\mu\in \Gamma}L^{p,\infty}(\mu))_\infty$ is a $C$-lattice embedding, then
\[ \ti{T}x = Tx: [(u_i)^n_{i=1}]\to (\bigoplus_{\mu\in \Gamma}L^{p,\infty}(\mu))_\infty\]
is a $(1+\ep)C$-lattice embedding.
Assume that $I_j\subseteq \{1,\dots, n\}$, $1\leq j\leq m$, such  that $\sum^m_{j=1}\chi_{I_j} = l\chi_{\{1,\dots,n\}}$ for some $l\in \N$.
By the first part,
\[  \sum_{j=1}^m|||\sum_{i\in I_j}b_i'u^*_i|||^{p'} \leq (1+\ep)^{p'}C^{p'}l.
\]
Since $|||b|||\leq 1+\ep$, for any $I\subseteq \{1,\dots,n\}$ we have
\[ |||\sum_{i\in I}b'_iu^*_i||| =\frac{1}{|||b|||} |||\sum_{i\in I}b_ie^*_i||| \geq \frac{1}{1+\ep}\|\sum_{i\in I}b_ie^*_i\|.
\]
Thus,
\[ \sum^m_{j=1}\|\sum_{i\in I}b_ie_i^*\|^{p'} \leq (1+\ep)^{2p'}C^{p'}l.\]
Taking $\ep\downarrow 0$ gives the desired result.
\end{proof}

The next example shows that $\gamma_p$ cannot be made to be $1$ in Proposition \ref{p3.3}.

\begin{ex}\label{e4.3} Suppose that $1< p < \infty$. Denote by $C_p$ the infimum of the set of constants $C$   so that every $(p,\infty)$-convex Banach lattice with constant one $C$-lattice embeds into $(\bigoplus_{\gamma\in \Gamma}L^{p,\infty}(\mu))_\infty$ for some set of measures $\Gamma$.
Then $C_p^{p'} \geq  \frac{3 \cdot 2^{p'}}{2(1+2^{p'})}> 1$.
\end{ex}

\begin{proof}
Let $X = \R^3$ so that the norm on $X^*$  is
\[ \|(b_1,b_2,b_3)\|_{X^*} = \max\bigl\{\bigl(|b_i|^{p'} + (|b_j|+|b_k|)^{p'}\bigr)^{\frac{1}{p'}}: \{i,j,k\} = \{1,2,3\}\bigr\}.\]
It is clear that the coordinate unit vectors form a normalized $1$-unconditional basis for $X$.
Suppose that $b=(b_1,b_2,b_3)\in X^*$ and $I,J$ are disjoint non-empty  subsets of $\{1,2,3\}$. Without loss of generality, we may assume that $I= \{i,j\}$ and $J = \{k\}$, where $i,j,k$ are distinct. Then
\[ \|b\chi_I\|^{p'}_{X^*} + \|b\chi_J\|^{p'}_{X^*} = (|b_i| + |b_j|)^{p'} + |b_k|^{p'} \leq \|b\|_{X^*}^{p'}.\]
Thus, $X^*$ satisfies a lower $p'$-estimate with constant $1$.  Hence, $X$ satisfies an upper $p$-estimate with constant $1$.
Suppose that $X$ $C$-lattice embeds into some $(\bigoplus_{\gamma\in \Gamma}L^{p,\infty}(\mu))_\infty$.
The vector $(1+2^{p'})^{-\frac{1}{p'}}(1,1,1)$ is normalized in $X^*_+$.
By \Cref{c4.2},
\[ \frac{1}{1+2^{p'} }\,(\|(1,1,0)\|^{p'} + \|(1,0,1)\|^{p'} + \|(0,1,1)\|^{p'}) \leq 2C^{p'}.\]
Thus,
\[ \frac{3 \cdot 2^{p'}}{2(1+2^{p'})} \leq C^{p'}.
\]
Taking infimum over the  eligible $C$'s gives the desired result. Direct verification shows that  $\frac{3 \cdot 2^{p'}}{2(1+2^{p'})}> 1$ if $1< p < \infty$.
\end{proof}
Now we are ready to show that $\gamma_p$ also cannot be made to be $1$ in Theorem \ref{t3.5}. Note that the following result can be adapted to work for other classes of lattices $\mathcal{C}$.
\begin{prop}\label{thmembed}
    Let $X$ be a Banach lattice with a normalized 1-unconditional basis $(e_i)$ satisfying upper $p$-estimates. Let $C\geq1$. The following are equivalent.
    \begin{enumerate}
        \item $X$ $C$-lattice embeds in $(\bigoplus_{\gamma\in \Gamma}L^{p,\infty}(\mu))_\infty$.
        \item For every operator $T:X\rightarrow X$, the extension $\widehat T:\fbl^{\cX_{p,\infty}}[X]\rightarrow X$ satisfies $\|\widehat T\|\leq C\|T\|.$
        \item The extension $\widehat{id_X}:\fbl^{\cX_{p,\infty}}[X]\rightarrow X$ of the identity $id_X:X\rightarrow X$ satisfies $\|\widehat{id_X}\|\leq C.$
    \end{enumerate}
\end{prop}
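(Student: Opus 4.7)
The plan is to establish the cyclic implications $(2) \Rightarrow (3) \Rightarrow (1) \Rightarrow (2)$. The implication $(2) \Rightarrow (3)$ is trivial by taking $T = id_X$. The implication $(1) \Rightarrow (2)$ is a direct bookkeeping argument using \Cref{p2.1} and the uniqueness of lattice homomorphic extensions. The heart of the matter is $(3) \Rightarrow (1)$, where we need to manufacture an honest lattice embedding of $X$ into a sum of weak-$L^p$ spaces. The obstacle here is that the canonical embedding $\phi_X : X \to \fbl^{\cX_{p,\infty}}[X]$ is only a linear isometry, \emph{not} a lattice homomorphism, so one cannot use $\phi_X$ directly together with \Cref{p2.2}. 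The assumption that $X$ has a 1-unconditional basis is precisely what allows us to bypass this obstruction via the lattice lifting property.

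For $(1) \Rightarrow (2)$, assume $J : X \to Y := (\bigoplus_{\gamma\in\Gamma} L^{p,\infty}(\mu_\gamma))_\infty$ is a $C$-lattice embedding. Since $Y \in \ol{\cX}_{p,\infty}$, given any operator $T : X \to X$, \Cref{p2.1} applied to the composition $JT : X \to Y$ yields a lattice homomorphism $\widehat{JT} : \fbl^{\cX_{p,\infty}}[X] \to Y$ with $\|\widehat{JT}\| = \|JT\| \leq \|T\|$. By uniqueness of extensions (since both $J\widehat{T}$ and $\widehat{JT}$ are lattice homomorphisms agreeing with $JT$ on $X$), one has $\widehat{JT} = J\widehat{T}$. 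Hence for every $f \in \fbl^{\cX_{p,\infty}}[X]$,
\[ C^{-1}\|\widehat{T}f\| \leq \|J\widehat{T}f\| = \|\widehat{JT}f\| \leq \|T\|\,\rho_{\cX_{p,\infty}}(f),\]
giving $\|\widehat{T}\| \leq C\|T\|$.

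For $(3) \Rightarrow (1)$, the strategy is to route the identity through the full free Banach lattice $\fbl[X] = \fbl^{\cC_1}[X]$. Since $X$ has a normalized 1-unconditional basis, the lattice lifting result (\Cref{lattice lifting property} applied with $\cC = \cC_1$, which trivially satisfies $X \in \ol{\cC_1}$) provides a lattice isometric embedding $\alpha : X \to \fbl[X]$ such that $\widehat{id_X}^{[\cC_1]} \circ \alpha = id_X$. Because $\cX_{p,\infty} \subseteq \cC_1$, we have $\rho_{\cX_{p,\infty}} \leq \rho_{\cC_1}$ on $\FVL[X]$, so the formal identity extends to a contractive lattice homomorphism $\pi : \fbl[X] \to \fbl^{\cX_{p,\infty}}[X]$. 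Setting $\beta := \pi \circ \alpha$, one verifies on $\FVL[X]$ (where both sides agree by the uniqueness of the lattice homomorphic extension of $id_X$) that $\widehat{id_X}^{[\cX_{p,\infty}]} \circ \pi = \widehat{id_X}^{[\cC_1]}$, and hence $\widehat{id_X}^{[\cX_{p,\infty}]} \circ \beta = id_X$. Using hypothesis (3), for every $x \in X$ we obtain
\[ \|x\| = \|\widehat{id_X}^{[\cX_{p,\infty}]} \beta x\| \leq C\,\|\beta x\| \leq C\|\alpha x\| = C\|x\|,\]
so $\beta$ is a $C$-lattice embedding of $X$ into $\fbl^{\cX_{p,\infty}}[X]$. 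Finally, \Cref{p2.2} gives a lattice isometric embedding of $\fbl^{\cX_{p,\infty}}[X]$ into some $(\bigoplus_{\gamma\in\Gamma} L^{p,\infty}(\mu_\gamma))_\infty$; composing with $\beta$ yields the desired $C$-lattice embedding, completing the cycle.
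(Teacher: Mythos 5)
Your proof is correct and follows essentially the same route as the paper's: the implications $(1)\Rightarrow(2)\Rightarrow(3)$ are handled by the same routine uniqueness-of-extension bookkeeping, and your $(3)\Rightarrow(1)$ is exactly the paper's argument, routing the identity through $\alpha:X\to\fbl^{\cC_1}[X]$ and the contractive formal identity into $\fbl^{\cX_{p,\infty}}[X]$, then concluding via \Cref{p2.2}. The only cosmetic difference is that the paper first identifies $\rho_{\cX^\sigma_{p,\infty}}=\rho_{\{\ell^{p,\infty}\}}$ to land in $(\bigoplus_\gamma\ell^{p,\infty})_\infty$, whereas you apply \Cref{p2.2} directly to get membership in $\ol{\cX_{p,\infty}}$, which suffices for statement (1).
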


\begin{proof}
  Clearly, $(1)\Rightarrow(2)\Rightarrow (3)$.
\medskip

    For the proof of $(3)\Rightarrow (1)$ we first note that by the remark after Theorem \ref{t3.5}, for any Banach space $E$, $\rho_{\cX^\sigma_{p,\infty}} = \rho_{\{\ell^{p,\infty}\}}$ on $\FVL[E]$.
Thus, it follows from  Proposition \ref{p2.2} that $\fbl^{\cX_{p,\infty}}[E]$ belongs to the class $\ol{\{\ell^{p,\infty}\}}$, i.e., it lattice isometrically embeds into $(\bigoplus_{\gamma \in \Gamma}\ell^{p,\infty})_\infty$.
\medskip

 We now adapt the proof of  \Cref{lattice lifting property} to our setting. Recall that by \cite[Theorem 8.3]{OTTT} there is a lattice isometric embedding $\alpha:X\rightarrow \fbl[X]=\fbl^{\cC_1}[X]$ such that $\widehat{id_X}\alpha=id_X$. Moreover, since $\cX^\sigma_{p, \infty}\subseteq \cC_1$, the formal identity $j:\fbl^{\cC_1}[X]\rightarrow \fbl^{\cX^\sigma_{p,\infty}}[X]$ is a continuous norm one injection such that $\widehat{id_X}j\alpha=id_X$. We claim that $j\alpha:X\rightarrow \fbl^{\cX^\sigma_{p,\infty}}[X]$ defines a $C$-lattice embedding. If true, the previous paragraph gives the implication $(3)\Rightarrow (1)$. To prove the claim, simply note that for every $x\in X$
 \[\frac{1}{C}\|x\|\leq  \frac{1}{\|\widehat{id_X}\|}\|\widehat{id_X}j\alpha x\| \leq \rho_{\cX_{p\infty}}(j\alpha x)\leq \rho_{\cC_1}(\alpha x) \leq \|x\|.\]
\end{proof}

\subsection{Equivalent renormings of \texorpdfstring{$L^{p,\infty}(\mu)$}{}}\label{appendix weak Lp}\label{lattice renorm}
Let $1<p< \infty$. As mentioned above, given a measure space $(\Omega,\Sigma,\mu)$, the space $L^{p,\infty}(\mu)$ is the set of all measurable functions $f:\Omega\rightarrow \R$ such that
\[ \|f\|^*_{L^{p,\infty}}:=\sup_{t>0} t\mu(\{\,|f|>t\})^{\frac{1}{p}} \]
is finite. In the previous subsections, we equipped $L^{p,\infty}(\mu)$ with the norm
\[ \|f\|_{L^{p,\infty}} = \sup \{\mu(E)^{\frac{1}{p}-1}\int_E |f| d\mu: 0 < \mu(E) <\infty\}.\]
However, for each $1\leq r<p$, we could have equally chosen to equip $L^{p,\infty}(\mu)$ with the norm 
\[    \|f\|_{L^{p,\infty}_r}:=\sup_{0<\mu(A)<\infty} \mu(A)^{-\frac{1}{r}+\frac{1}{p}} \left(\int_A |f|^r\,d\mu\right)^{\frac{1}{r}}. \]
Indeed, it is  well-known (cf. \cite[Exercise 1.1.12]{GrafakosCFA}) that
\[\|f\|^*_{L^{p,\infty}} \leq \|f\|_{L^{p,\infty}_r} \leq \left(\frac{p}{p-r}\right)^{\frac{1}{r}}  \| f\|^*_{L^{p,\infty}}\]
for every $f\in L^{p,\infty}(\mu)$, and it is easy to see that for each $1\leq r<p$,  $\|\cdot\|_{L^{p,\infty}_r}$ defines a lattice norm on $L^{p,\infty}(\mu)$ with upper $p$-estimate constant $1$.  For this reason, it is natural to ask whether the ambiguity in choosing a norm on $L^{p,\infty}(\mu)$ is the reason why $\gamma_p$ cannot be chosen to be $1$ in Proposition \ref{p3.3} and Theorem \ref{t3.5}.
\medskip

The aim of this subsection is to show that $(L^{p,\infty}(\mu),\|\cdot\|_{L^{p,\infty}_r})$ lattice isometrically  embeds into a Banach lattice of the form
$(\bigoplus_{\mu\in \Gamma}L^{p,\infty}(\nu))_\infty$, where each $L^{p,\infty}(\nu)$ is given the $L^{p,\infty}_1$-norm. As a consequence, we deduce that $\rho_{p,\infty}(\cdot)=\rho_{L^{p,\infty}_1}(\cdot)\geq \rho_{L^{p,\infty}_r}(\cdot)$ on $\FVL[E]$, proving that the above renormings cannot eliminate the constant $\gamma_p$ in  Proposition \ref{p3.3} and Theorem \ref{t3.5}.
\medskip

We begin with a lemma.

\begin{lem}\label{lem A2}
Let $e = (1,\dots, 1)\in \R^n$ and let $(e_i)^n_{i=1}$ be the coordinate unit vector basis for $\R^n$. Let $\beta, b\in \R^n_+$, $\|\beta\|_1, \|b\|_1 \leq 1$, and let $0 < s < 1$. Define $\al : \R^n_+ \to \R$ by
$\al(x) = (\beta\cdot x)^{1-s}\, (b\cdot x)^s$, where $\cdot$ is the dot product on $\R^n$. 
Then $\al$ is positively homogeneous, $\al(e) = \|\beta\|_1^{1-s}\cdot \|b\|_1^s \leq 1$ and $\sum^m_{j=1}\al(x_j) \leq \al(\sum^m_{j=1}x_j)$ for any sequence $(x_j)^m_{j=1} \subseteq \ell^1(n)_+$.
\end{lem}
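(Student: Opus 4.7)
The plan is to handle the three claims in order, with the superadditivity being the only substantive point.

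Positive homogeneity is immediate: for $\lambda \geq 0$, bilinearity of the dot product gives $\alpha(\lambda x) = (\lambda\, \beta\cdot x)^{1-s}(\lambda\, b\cdot x)^{s} = \lambda^{(1-s)+s}\alpha(x) = \lambda\alpha(x)$. Evaluating at $e$ yields $\beta\cdot e = \sum_i \beta_i = \|\beta\|_1$ and likewise $b\cdot e = \|b\|_1$, so $\alpha(e) = \|\beta\|_1^{1-s}\|b\|_1^s$, which is at most $1$ since each factor lies in $[0,1]$.

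For the superadditivity, set $A_j = \beta\cdot x_j$ and $B_j = b\cdot x_j$; these are nonnegative because $\beta, b, x_j \in \R^n_+$. Linearity of the dot product gives $\beta\cdot\sum_j x_j = \sum_j A_j$ and $b\cdot \sum_j x_j = \sum_j B_j$, so the claim becomes
\begin{equation*}
  \sum_{j=1}^m A_j^{1-s}B_j^s \leq \Bigl(\sum_{j=1}^m A_j\Bigr)^{1-s}\Bigl(\sum_{j=1}^m B_j\Bigr)^{s}.
\end{equation*}
This is precisely H\"older's inequality applied with the conjugate exponents $p_1 = \tfrac{1}{1-s}$ and $p_2 = \tfrac{1}{s}$ to the sequences $(A_j^{1-s})_j$ and $(B_j^s)_j$, since $(A_j^{1-s})^{p_1} = A_j$ and $(B_j^s)^{p_2} = B_j$.

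There is no real obstacle: once one recognizes $\alpha$ as a weighted geometric mean of two linear functionals, the inequality reduces to H\"older's inequality with exponents $1/(1-s)$ and $1/s$, and the remaining assertions are one-line verifications.
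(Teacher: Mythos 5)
Your proof is correct and follows essentially the same route as the paper: the superadditivity is exactly H\"older's inequality with conjugate exponents $\tfrac{1}{1-s}$ and $\tfrac{1}{s}$ applied to $\bigl((\beta\cdot x_j)^{1-s}\bigr)_j$ and $\bigl((b\cdot x_j)^{s}\bigr)_j$, which is precisely the paper's one-line argument, and your verification of the homogeneity and the value at $e$ fills in what the paper dismisses as clear.
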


\begin{proof}
Let $(x_j)^m_{j=1} \subseteq \R^n_+$.
Then
\begin{align*}
\sum^m_{j=1}\al(x_j) & \leq \|(\beta\cdot x_j)^{1-s}\|_{\frac{1}{1-s}}\, \|(b\cdot x_j)^s\|_{\frac{1}{s}}\\
& = (\beta\cdot \sum^m_{j=1}x_j)^{1-s}\, (b\cdot \sum^m_{j=1}x_j)^{s} = \al(\sum^m_{j=1}x_j).
\end{align*}
The remaining assertions are clear.
\end{proof}

Let $x\in \R^n$ be expressed as $(x(j))^n_{j=1}$. In our next result, for fixed $i\in \N$, we write $x < i$ if $\max\supp x <i$.

\begin{prop}\label{prop A3}
There is a sequence $(d_i)^n_{i=1}$ of non-negative real numbers such that for each $i$,
\[ \sum^{i-1}_{j=1}x(j)d_j +\al(z) - m -\sum^{i-1}_{j=1}y(j)d_j \leq d_i \leq m' + \sum^{i-1}_{j=1}y'(j)d_j
- \sum^{i-1}_{j=1}x'(j)d_j - \al(z')\]
provided that $m, m' \in \R_+$, $x,y,z, x',y',z' \in \R^n_+$,
\begin{equation}\label{e4.1} x,y,x',y' < i,\quad  x+ z = e_i + me + y \quad \text{and} \quad e_i + x' +z' =  m'e+ y'.\end{equation}
\end{prop}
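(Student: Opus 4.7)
My plan is to define the entire sequence $(d_i)_{i=1}^n$ in closed form as the gradient of $\alpha$ at the all-ones vector $e = (1,\dots,1)$; assuming first that $\|\beta\|_1, \|b\|_1 > 0$, set
\[ d_i := \alpha(e) \left[ (1-s)\frac{\beta(i)}{\|\beta\|_1} + s\frac{b(i)}{\|b\|_1}\right].\]
(If $\beta$ or $b$ vanishes then $\alpha \equiv 0$ and $d_i = 0$ works trivially.) Two properties of this choice are crucial. First, summing yields
\[D_n := \sum_{j=1}^n d_j = \alpha(e) = \|\beta\|_1^{1-s}\|b\|_1^s \leq 1\]
by \Cref{lem A2}. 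Second, the weighted AM--GM inequality $(1-s)u_1 + su_2 \geq u_1^{1-s}u_2^s$, applied with $u_1 = (\beta\cdot v)/\|\beta\|_1$ and $u_2 = (b\cdot v)/\|b\|_1$, forces $d\cdot v \geq \alpha(v)$ for every $v\in\R^n_+$.

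With these two ingredients in hand, both bounds on $d_i$ collapse to the same scalar inequality. For the lower bound, I would use the constraint $x+z = e_i + me + y$ coordinate by coordinate: at $j<i$ it gives $y(j)-x(j) = z(j)-m$, while $z(i)=m+1$ and $z(j)=m$ for $j>i$. Feeding this into
\[\sum_{j=1}^{i-1} x(j) d_j + \alpha(z) - m - \sum_{j=1}^{i-1} y(j) d_j - d_i\]
and using the identity $d\cdot z = \sum_{j<i} z(j) d_j + m(D_n - D_{i-1}) + d_i$ produces a telescoping cancellation, leaving $\alpha(z) - d\cdot z + m(D_n - 1)$, which is $\leq 0$ by the two properties above. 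The upper bound $d_i \leq m' + \sum_{j<i} y'(j)d_j - \sum_{j<i} x'(j)d_j - \alpha(z')$ reduces in exactly the same way to $\alpha(z') \leq d\cdot z' + m'(1-D_n)$; the constraint $e_i + x' + z' = m'e + y'$ forces $m' \geq 1$ via $z'(i)=m'-1\geq 0$, so this causes no issue.

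The main obstacle is bookkeeping rather than analysis: one has to track carefully the $i$-th and post-$i$ coordinates of $z$ and $z'$, whose values are pinned down by the affine constraints, and then observe that everything else cancels against $d\cdot z$ and $d\cdot z'$. As a sanity check, the extreme case $x=y=x'=y'=0$, $m=0$, $m'=1$ collapses the two bounds to $\alpha(e_i) \leq d_i \leq 1 - \alpha(e-e_i)$, which follows immediately from the superadditivity $\alpha(e_i) + \alpha(e-e_i) \leq \alpha(e) \leq 1$ already supplied by \Cref{lem A2}, confirming that the proposed $d_i$'s sit in the correct interval.
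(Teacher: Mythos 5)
Your proposal is correct, but it takes a genuinely different route from the paper's. The paper constructs $(d_i)$ by induction on $i$: having chosen $d_1,\dots,d_{i-1}$, it must verify that the prescribed interval for $d_i$ is non-empty, and this feasibility check is the heart of the argument --- it combines two admissible configurations, reduces via $u=(x+x')\wedge(y+y')$ to disjoint remainders $v_1,v_2$, isolates $j_0=\max\supp v_1$, and applies the inductive hypothesis at $j_0$ together with the superadditivity of $\al$ from \Cref{lem A2}. You bypass the induction entirely by exhibiting the whole sequence in closed form as the tangent functional $d=\nabla\al(e)$, and your two global facts --- $\|d\|_1=\al(e)\leq 1$ (Euler's identity for the degree-one homogeneous $\al$) and $d\cdot v\geq \al(v)$ for all $v\in\R^n_+$ (weighted AM--GM, i.e.\ the supporting hyperplane of the concave function $\al$ at $e$) --- do collapse both inequalities after the bookkeeping you describe: writing $D_k=\sum_{j\leq k}d_j$, the constraint $x+z=e_i+me+y$ pins $z(i)=m+1$ and $z(j)=m$ for $j>i$, and the lower-bound residual reduces exactly to $\al(z)-d\cdot z+m(D_n-1)\leq 0$, while $e_i+x'+z'=m'e+y'$ gives $z'(i)=m'-1$, $z'(j)=m'$ for $j>i$, and the upper-bound residual reduces to $\al(z')-d\cdot z'+m'(D_n-1)\leq 0$; the degenerate case $\beta=0$ or $b=0$ is handled by $d=0$ as you note (there $m'\geq 0$ already suffices). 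Your approach buys an explicit $d$, a shorter non-inductive proof, and a conceptual reading of the statement: \Cref{prop A3} is precisely the assertion that some $d\in\R^n_+$ with $\|d\|_1\leq 1$ linearly dominates $\al$ on the positive cone, which is exactly what the corollary following it extracts, so everything downstream (in particular the embedding $L^{p,\infty}_r(\mu)\to L^{p,\infty}_1(\nu)$, where $d_i>0$ is needed and follows from $d_i\geq \al(e_i)>0$ in the application) goes through unchanged. What the paper's induction buys is independence from any closed form or smoothness --- it uses only positive homogeneity, $\al(e)\leq 1$, and superadditivity --- though since a superadditive positively homogeneous function is automatically concave, a supergradient at $e$ always exists, so your argument also generalizes with that substitution in place of the explicit gradient.
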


\begin{proof}
The proof is by induction on $i$.
For $i=1$, $x = y = x' = y'= 0$.  Thus it suffices to show that
 \[\al(z) - m \leq m' -\al(z') \quad\text{if}\quad z= e_1 + me\ \ \text{and}\  \ e_1 + z' = m'e.\]
To see this, note that
\[ m+m' \geq  \al((m+m')e) = \al(z+ z') \geq \al(z) + \al(z'),
\]
from which the desired conclusion follows.
\medskip

Assume now that $d_j$, $1 \leq j < i$, have been chosen to satisfy the proposition. To be able to choose $d_i$ satisfying  the proposition, we need to show that
\[ \sum^{i-1}_{j=1}x(j)d_j +\al(z) - m -\sum^{i-1}_{j=1}y(j)d_j \leq m' + \sum^{i-1}_{j=1}y'(j)d_j
- \sum^{i-1}_{j=1}x'(j)d_j - \al(z')\]
if  $m,m',x,y,z,x',y',z'$   satisfy (\ref{e4.1}).
Equivalently, we must show that
\begin{equation}\label{e4.2}\sum^{i-1}_{j=1}(x(j)+x'(j))d_j  + \al(z) + \al(z') \leq m + m' + \sum^{i-1}_{j=1}(y(j)+y'(j))d_j.\end{equation}
If $x+x' = y+ y'$, then (\ref{e4.1}) gives $z+ z' = (m+m')e$.
Thus, $m+m' \geq \al(z) + \al(z')$, yielding (\ref{e4.2}). Otherwise, let
  $u = (x+ x') \wedge (y+y')\in \R^n_+$.  Then   $u <i$ and
 $v_1:= x+ x' - u,\ v_2:=y+y'-u$ are  disjoint in $\R^n_+$, not both $0$.
Assume that  $j_0 = \max \supp v_1 > \max\supp v_2$, as the other case is similar.
Set $w = v_1 - v_1(j_0)e_{j_0}$.
Then $w, v_2 < j_0$, $w(j)=v_1(j)$ if $j<j_0$ and
\[ v_1(j_0)e_{j_0} + w + (z+z') =   (m+m')e + v_2.
\]
Using the right  inequality in the inductive hypothesis and the positive homogeneity of $\al$, we see that
\[  v_1(j_0) d_{j_0} \leq (m+m') +\sum^{j_0-1}_{j=1}v_2(j)d_j  -\sum^{j_0-1}_{j=1}v_1(j)d_j-  \al(z+z').  \]
Adding $\sum^{i-1}_{j=1}u(j)d_j$ to both sides and rearranging  gives
\[ \sum^{i-1}_{j=1}(x(j)+x'(j))d_j  + \al(z+z') \leq m + m' + \sum^{i-1}_{j=1}(y(j)+y'(j))d_j.\]
The desired inequality (\ref{e4.2}) then follows since $\al(z) + \al(z') \leq \al(z+ z')$.
\end{proof}

\begin{cor}
In the above notation, we have  $d= (d_i)^n_{i=1} \in \R^n_+$, $\al(e) \leq \|d\|_1 \leq 1$ and $\al(x) \leq x\cdot d$
for any $x\in \R^n_+$.
\end{cor}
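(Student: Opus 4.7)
The plan is to deduce all three assertions of the corollary from Proposition \ref{prop A3} by plugging in specific test data; the inequality $\alpha(x)\le x\cdot d$ is the main claim, from which the other two follow by specialization.

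For $\alpha(x)\le x\cdot d$ with $x\in\R^n_+$, I would dispose of the case $x=0$ and otherwise set $i_0:=\max\supp x$, $\lambda:=x(i_0)>0$, and $w:=\lambda^{-1}x$. Then $w(i_0)=1$ and $w-e_{i_0}\in\R^n_+$ has support contained in $\{1,\dots,i_0-1\}$. Applying the left-hand inequality of Proposition \ref{prop A3} at $i=i_0$ with the dummy data $(0,\,w-e_{i_0},\,w,\,0)$ in the roles of $(x,y,z,m)$ — the constraint $0+w=e_{i_0}+0\cdot e+(w-e_{i_0})$ being automatic — yields
\[\alpha(w)-\sum_{j<i_0}(w(j)-e_{i_0}(j))\,d_j\le d_{i_0},\]
i.e.\ $\alpha(w)\le w\cdot d$. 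Multiplying by $\lambda$ and invoking the positive homogeneity of $\alpha$ recorded in Lemma \ref{lem A2} gives $\alpha(x)=\lambda\alpha(w)\le x\cdot d$. Specializing to $x=e_i$ then gives $d_i\ge\alpha(e_i)=\beta(i)^{1-s}b(i)^s\ge 0$, so $d\in\R^n_+$, and specializing to $x=e$ gives the lower bound $\alpha(e)\le e\cdot d=\|d\|_1$.

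For the upper bound $\|d\|_1\le 1$, I would use the right-hand inequality of Proposition \ref{prop A3} at $i=n$ with dummy data $(e-e_n,\,0,\,0,\,1)$ in the roles of $(x',y',z',m')$; the constraint $e_n+(e-e_n)+0=1\cdot e+0$ holds trivially, and the inequality collapses to $d_n\le 1-\sum_{j<n}d_j$, i.e.\ $\sum_{j=1}^n d_j\le 1$. There is no genuine obstacle: Proposition \ref{prop A3} (itself proved by a delicate induction on $i$) has already absorbed the combinatorial content, and the present corollary is purely a matter of choosing the right test data. The only mild care needed is verifying in the proof of the main inequality that $w-e_{i_0}$ is indeed nonnegative and supported below $i_0$, which is immediate from the choice of $i_0$ and the normalization $w(i_0)=1$.
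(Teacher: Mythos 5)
Your proof is correct and takes essentially the same route as the paper: all three assertions are obtained by feeding suitable test data into \Cref{prop A3}, and your choice of data for the bound $\|d\|_1\leq 1$ is identical to the paper's (the paper also packages it with $\al(e)\leq\|d\|_1$ in a single application at $i=n$). The only difference is cosmetic: for $\al(x)\leq x\cdot d$ the paper applies the proposition at $i=n$ with $y=x-x(n)e_n$ and invokes positive homogeneity, whereas you normalize at $i_0=\max\supp x$, which if anything handles the degenerate case $x(n)=0$ more transparently.
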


\begin{proof}
Taking $i = n$ in \Cref{prop A3}, we have
\[x = 0,\ y = e -e_n,\ z = e,\ m=0;\ x' = e-e_n,\ y' = 0,\ z' = 0,\ m'  = 1.\]
Note that $x, y, x', y' < n$, $x+z = e_n +me+y$ and $e_n + x' + z'= m'e+y'$.
Thus,
\[ \al(e) -\sum^{n-1}_{j=1}d_j \leq d_n \leq 1
- \sum^{n-1}_{j=1}d_j.\]
Hence, $\al(e) \leq \|d\|_1 \leq 1$.   Finally, if $u\in \R^n_+$, let $x = 0,\ z = u,\ m=0$ and $y = u-u(n)e_n$.
Then
 $x+ z = u(n)e_n + me + y$.  By Proposition \ref{prop A3} and the positive homogeneity of $\al$,
\[ \al(u) -\sum^{n-1}_{j=1}u(j)d_j \leq u(n)d_n \quad \implies \quad \al(u) \leq u\cdot d.\]
\end{proof}


\begin{lem}
Let $(\Om,\Sigma,\mu)$ be a measure space. Suppose that $(a_i)^n_{i=1} \in \R^n_+$ and $(U_i)^n_{i=1}$ is a disjoint sequence in $\Sigma$ with $0< \mu(U_i)<\infty$ so that $C: =(\sum^n_{i=1}\mu(U_i))^{-\frac{1}{r}+\frac{1}{p}} \|\sum^n_{i=1}a_i\chi_{U_i}\|_{L^r(\mu)} \leq 1$.
There is a measure $\nu$ on $\Sigma$ and a linear lattice homomorphism
$S: L^{p,\infty}_r(\mu) \to L^{p,\infty}_1(\nu)$ so that $\|S\| \leq 1$ and $\|S(\sum^n_{i=1}a_i\chi_{U_i})\| \geq C^r$.
\end{lem}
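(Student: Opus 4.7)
The plan is to construct $\nu$ as an appropriately reweighted version of $\mu$ supported on $\bigcup_{i=1}^n U_i$, and to take $S$ to be multiplication by a positive step function constant on each $U_i$. The weights for both $\nu$ and $S$ will be read off from the vector $d=(d_i)_{i=1}^n$ supplied by the preceding corollary.

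First, I would apply that corollary with $s=1/r$, $\beta_i=\mu(U_i)/M$, and $b_i=a_i^r\mu(U_i)/K$, where $M=\sum_{i=1}^n\mu(U_i)$ and $K=\sum_{i=1}^n a_i^r\mu(U_i)$. Under these choices, $\al(x)=(\beta\cdot x)^{1/r'}(b\cdot x)^{1/r}$ is precisely the Hölder expression that dominates $\sum_{i\in I}a_i\mu(U_i)$, and $\al(e)=1$. The corollary then yields $d\in\R^n_+$ with $\|d\|_1=1$ satisfying
\[
\mu\Bigl(\bigcup_{i\in I}U_i\Bigr)^{1/r'}\Bigl(\sum_{i\in I}a_i^r\mu(U_i)\Bigr)^{1/r}\leq M^{1/r'}K^{1/r}\sum_{i\in I}d_i
\]
for every $I\subseteq\{1,\ldots,n\}$.

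Next, I would set $\nu=\bigl(\sum_i h_i\chi_{U_i}\bigr)\mu$ and $Sf=f\cdot\sum_i w_i\chi_{U_i}$, with $h_i$ proportional to $d_i/\mu(U_i)$ and $w_i$ proportional to $a_i^{r-1}$, up to a global scaling constant determined by $M$ and $K$. The measure $\nu$ is then supported on $\bigcup_i U_i$ and $S$ is manifestly a lattice homomorphism. For the lower bound, evaluating the $L^{p,\infty}_1(\nu)$-norm of $S(\sum_j a_j\chi_{U_j})$ at the test set $A=\bigcup_i U_i$ and using $\|d\|_1=1$ collapses the calculation to exactly $C^r$. For the upper bound, applying Hölder's inequality with exponents $(r,r')$ to $\int_A f(wh)\,d\mu$ on a general measurable set $A\subseteq\bigcup_i U_i$, together with the standard estimate $\int_B f^r\,d\mu\leq\mu(B)^{1-r/p}\|f\|_{L^{p,\infty}_r(\mu)}^r$, reduces the inequality $\|Sf\|_{L^{p,\infty}_1(\nu)}\leq\|f\|_{L^{p,\infty}_r(\mu)}$ precisely to the displayed estimate above.

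The main obstacle is pinning down the global scaling constant so that both bounds hold with constant $1$ simultaneously. The Hölder calculation for the upper bound produces a remainder whose dependence on $\mu(\bigcup_I U_i)$, $\sum_I a_i^r\mu(U_i)$, and $\sum_I d_i$ involves exponents governed by $r,r',p,p'$, and these do not a priori match the exponents in the corollary's estimate. Reconciling them requires raising the corollary's inequality to an appropriate power; the hypothesis $C\leq 1$ is exactly what makes the reconciliation work without loss, while the identity $\al(e)=1$ simultaneously produces the sharp value $C^r$ on the lower end.
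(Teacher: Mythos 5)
Your overall skeleton does match the paper's proof (a reweighted measure $\nu$ built from the vector $d$, a multiplication operator $S$ constant on each $U_i$, and H\"older combined with the estimate $\|f\chi_B\|_{L^r(\mu)}\leq\mu(B)^{\frac1r-\frac1p}\|f\|_{L^{p,\infty}_r(\mu)}$), but two of your concrete choices are wrong, and the ``reconciliation by raising to a power'' you propose cannot repair them. First, the exponent: the paper takes $s=p'(\frac1r-\frac1p)$, not $s=\frac1r$. After H\"older, the upper bound requires, with $x_i=\mu(A\cap U_i)/\mu(U_i)$ and in your notation ($\beta$ the measure vector, $b$ the $a_i^r$-weighted vector), an inequality of the form $(b\cdot x)^{p'/r'}(\beta\cdot x)^{p'(\frac1r-\frac1p)}\leq x\cdot d$, the two exponents summing to $1$; your application of the corollary with $s=\frac1r$ instead yields $(\beta\cdot x)^{1/r'}(b\cdot x)^{1/r}\leq x\cdot d$. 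Both left-hand sides, and $x\cdot d$, are positively homogeneous of degree one in $x$, so raising your inequality to any power $\theta\neq1$ destroys the homogeneity match and proves nothing; quantitatively, the ratio of the needed geometric mean to yours is $\bigl((b\cdot x)/(\beta\cdot x)\bigr)^{t}$ with $t=\frac{p'}{r'}-\frac1r$, which vanishes only when $r=2-\frac1p$ and is otherwise unbounded as the data $(a_i)$ vary (taking $x$ a coordinate vector), while the lemma demands $\|S\|\leq1$ exactly. The hypothesis $C\leq1$ does not rescue this: its role in the paper is only to guarantee $\|\beta\|_1=C^r\leq1$ for the \emph{unnormalized} choice $\beta_i=M^{\frac rp-1}\mu(U_i)a_i^r$, so that $\|d\|_1\leq1$ and hence $\nu(\bigcup_iU_i)\leq1$, which is what makes the lower-bound test set admissible; it has nothing to do with reconciling exponents.

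Second, the multiplier: you take $w_i\propto a_i^{r-1}$ with only a global constant, but the paper's operator is $Sf=M^{\frac rp}\sum_i\frac{a_i^{r-1}b_i}{d_i}f\chi_{U_i}$, with $1/d_i$ built into each weight precisely so that it cancels against $d\nu=\sum_i\frac{d_i}{\mu(U_i)}\chi_{U_i}\,d\mu$. Without that cancellation your lower bound at $A=\bigcup_iU_i$ evaluates to a constant times $\sum_i a_i^rd_i$, which does not ``collapse to exactly $C^r$'': the vector $d$ is known only through the inequalities $\alpha(x)\leq x\cdot d$ and $\|d\|_1\leq1$, so $\sum_i a_i^rd_i$ is not computable and admits no useful lower bound of the required form. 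Likewise, in the upper bound the H\"older step then leaves the uncontrolled weights $d_i^{r'}$ inside the $L^{r'}$-factor, namely $\bigl(\sum_i a_i^rd_i^{r'}\mu(U_i)^{1-r'}x_i\bigr)^{1/r'}$, about which the corollary says nothing. Repairing the argument amounts to reverting to the paper's choices: $s=p'(\frac1r-\frac1p)$, $b_i=\mu(U_i)/M$, $\beta_i=M^{\frac rp-1}\mu(U_i)a_i^r$ (so that $\|\beta\|_1=C^r\leq1$, which is where $C\leq1$ enters), and the weight $a_i^{r-1}b_i/d_i$; then the $d_i$'s cancel in both bounds, the lower bound computes exactly to $M^{\frac rp-1}\|\sum_i a_i\chi_{U_i}\|_{L^r(\mu)}^r=C^r$, and the upper bound reduces, after raising to the power $p'$, precisely to $\alpha(x)\leq x\cdot d$.
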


\begin{proof}
Define $M = \sum^n_{j=1}\mu(U_j)$, $b_i = \frac{\mu(U_i)}{M}$, $b= (b_i)^n_{i=1}$ and
$\beta_i = M^{\frac{r}{p}-1}\mu(U_i)a_i^r$, $\beta = (\beta_i)^n_{i=1}$.  Clearly, $\beta, b\in \R^n_+$ and $\|b\|_1 =1$.
Moreover,
\[ \|\beta\|_1 =M^{r(\frac{1}{p}-\frac{1}{r})}\,\|\sum^n_{i=1}a_i\chi_{U_i}\|^r_{L^r(\mu)} = C^r \leq 1.
\]
Take $s = p'(\frac{1}{r}-\frac{1}{p})$ and define $\al$ as in Lemma \ref{lem A2}. Note that $\al(e) = \|\beta\|_1^{1-s}\,\|b\|_1^s = C^{r(1-s)}\leq 1$. Obtain $d$ using Proposition \ref{prop A3}.
Define a measure $\nu$ on $\Sigma$ by $\nu(A) = \sum^n_{i=1}\frac{d_i\,\mu(A\cap U_i)}{\mu(U_i)}$ and a linear operator
\[ S: L^{p,\infty}_r(\mu) \to L^{p,\infty}_1(\nu) \quad \text{by}\quad Sf = M^{\frac{r}{p}}\sum^n_{i=1}\frac{a_i^{r-1}b_i}{d_i}f\chi_{U_i}.\]
Clearly, $S$ is a linear lattice homomorphism.
Note that $\nu(\bigcup^n_{i=1}U_i) = \sum^n_{i=1}d_i \leq 1$.  Hence,
\begin{align*} \|S(\sum^n_{i=1}a_i\chi_{U_i})\| &\geq   M^{\frac{r}{p}}\|\sum^n_{i=1}\frac{a_i^rb_i}{d_i}\chi_{U_i}\|_{L^1(\nu)} = M^{\frac{r}{p}}\sum^n_{i=1}a^r_ib_i \\&= M^{\frac{r}{p}-1}\|\sum^n_{i=1}a_i\chi_{U_i}\|^r_{L^r(\mu)} = C^r.
\end{align*}
On the other hand, let $f\in L^{p,\infty}_r(\mu)$ with $\|f\| \leq 1$.  If $A\in \Sigma$ with $0 < \nu(A) <\infty$ then
\begin{align*}
 \int_A|Sf| \,d\nu & = M^{\frac{r}{p}}\sum^n_{i=1}\frac{a_i^{r-1}b_i}{d_i}\int_{A\cap U_i} |f| \,d\nu
=  M^{\frac{r}{p}-1}\sum^n_{i=1} a_i^{r-1}\int_{A\cap U_i} |f| \,d\mu\\
&\leq M^{\frac{r}{p}-1}\bigl\|\sum^n_{i=1}a^{r-1}_i\chi_{A\cap U_i}\bigr\|_{L^{r'}(\mu)}\, \bigl\|f\chi_{\bigcup^n_{i=1}(A\cap U_i)}\bigr\|_{L^r(\mu)}\\
&\leq  \left(\sum^n_{i=1}\beta_i\frac{\,\mu(A\cap{U_i})}{\mu(U_i)}\right)^{\frac{1}{r'}}\, \left(\sum^n_{i=1} b_i\frac{\,\mu(A\cap{U_i})}{\mu(U_i)}\right)^{\frac{1}{r}-\frac{1}{p}}.
\end{align*}
Let $x = \bigl(\frac{\mu(A\cap U_i)}{\mu(U_i)}\bigr)^n_{i=1} \in \R^n_+$.
By direct calculation, $1-s =\frac{p'}{r'}$.  Thus,
\[ \bigl|\int_ASf \,d\nu\bigr| \leq (\beta\cdot x)^{\frac{1-s}{p'}}\, (b\cdot x)^{\frac{s}{p'}} = \al(x)^{\frac{1}{p'}} \leq (x\cdot d)^{\frac{1}{p'}} = \nu(A)^{\frac{1}{p'}}.
\]
Hence, $\|Sf\| \leq 1$.
\end{proof}

\begin{thm}
Suppose that $(\Om,\Sigma,\mu)$ is a measure space and  $1< r < p$.
There is a set $\Gamma$ of measures on $\Sigma$ so that $L^{p,\infty}_r(\mu)$ lattice isometrically embeds into $(\bigoplus_{\nu \in \Gamma} L^{p,\infty}_1(\nu))_{\infty}.$
\end{thm}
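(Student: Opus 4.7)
The strategy is to use the preceding lemma as a building block and assemble the full $L^{p,\infty}_r$ norm via an $\ell^\infty$ direct sum. Index $\Gamma$ by all admissible configurations $\gamma=((a_i,U_i))_{i=1}^n$, meaning finite sequences of non-negative scalars $a_i$ together with pairwise disjoint measurable sets $U_i$ with $0<\mu(U_i)<\infty$ such that
\[C_\gamma:=\Bigl(\sum_{i=1}^n\mu(U_i)\Bigr)^{-\frac{1}{r}+\frac{1}{p}}\Bigl\|\sum_{i=1}^n a_i\chi_{U_i}\Bigr\|_{L^r(\mu)}\leq 1.\]
For each $\gamma\in\Gamma$ the preceding lemma supplies a measure $\nu_\gamma$ and a lattice homomorphism $S_\gamma: L^{p,\infty}_r(\mu)\to L^{p,\infty}_1(\nu_\gamma)$ with $\|S_\gamma\|\leq 1$ and $\|S_\gamma g_\gamma\|\geq C_\gamma^{r}$, where $g_\gamma=\sum_i a_i\chi_{U_i}$. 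The candidate embedding is
\[T:L^{p,\infty}_r(\mu)\to\Bigl(\bigoplus_{\gamma\in\Gamma}L^{p,\infty}_1(\nu_\gamma)\Bigr)_\infty,\qquad Tf=(S_\gamma f)_{\gamma\in\Gamma}.\]
Since the lattice operations on the $\ell^\infty$-sum are coordinatewise and each $S_\gamma$ is a lattice homomorphism of norm at most one, $T$ is a lattice homomorphism with $\|T\|\leq 1$.

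The real content is the reverse inequality $\|Tf\|\geq\|f\|_{L^{p,\infty}_r}$. After normalization, assume $\|f\|_{L^{p,\infty}_r}=1$; fix $\varepsilon>0$ and, by definition of the norm, select a set $A\in\Sigma$ with $0<\mu(A)<\infty$ satisfying $\mu(A)^{-\frac{1}{r}+\frac{1}{p}}\|f\chi_A\|_{L^r(\mu)}\geq 1-\varepsilon$. Because $f\chi_A\in L^r(\mu)$ and $\mu(A)<\infty$, one can approximate $|f|\chi_A$ from below in $L^r$-norm by a positive simple function $g=\sum_{i=1}^n a_i\chi_{V_i}$ with the $V_i$'s pairwise disjoint subsets of $A$, satisfying $\|g\|_{L^r(\mu)}\geq(1-\varepsilon)\|f\chi_A\|_{L^r(\mu)}$. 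Letting $\gamma$ denote this configuration, one has
\[C_\gamma\leq\mu(A)^{-\frac{1}{r}+\frac{1}{p}}\|f\chi_A\|_{L^r(\mu)}\leq\|f\|_{L^{p,\infty}_r}=1,\]
so $\gamma\in\Gamma$, and moreover $C_\gamma\geq(1-\varepsilon)^2$.

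To transfer the lower bound from $g$ to $f$, invoke the explicit formula $S_\gamma h=M^{r/p}\sum_i (a_i^{r-1}b_i/d_i)\,h\chi_{V_i}$ from the proof of the preceding lemma, which has non-negative coefficients. Since $g\chi_{V_i}\leq |f|\chi_{V_i}$ pointwise (as $V_i\subseteq A$ and $g\leq |f|\chi_A$), we deduce $S_\gamma|f|\geq S_\gamma g\geq 0$, and because $S_\gamma$ is a lattice homomorphism, $|S_\gamma f|=S_\gamma|f|$; hence
\[\|S_\gamma f\|_{L^{p,\infty}_1(\nu_\gamma)}=\|S_\gamma|f|\|\geq \|S_\gamma g\|\geq C_\gamma^{r}\geq(1-\varepsilon)^{2r}.\]
Letting $\varepsilon\to 0$ gives $\|Tf\|=\sup_\gamma\|S_\gamma f\|\geq 1=\|f\|_{L^{p,\infty}_r}$, which establishes that $T$ is a lattice isometric embedding. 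The main (mild) obstacle is arranging the approximation in the second paragraph so that $C_\gamma\leq 1$ is preserved while $C_\gamma$ remains close to the $L^{p,\infty}_r$ norm of $f$; once that is done, the preceding lemma does the rest.
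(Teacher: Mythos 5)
Your construction is essentially the paper's proof: the same index set $\Gamma$ of admissible simple configurations with $C_\gamma\leq 1$, the same map $Tf=(S_\gamma f)_{\gamma}$ into the $\ell^\infty$-sum, and the same use of the lemma's lower bound $\|S_\gamma g_\gamma\|\geq C_\gamma^r$ together with positivity to transfer that bound to $f$. (Incidentally, invoking the explicit formula for $S_\gamma$ is unnecessary: $S_\gamma$ is a lattice homomorphism, hence positive, so $0\leq g\leq |f|$ already gives $\|S_\gamma f\|=\|S_\gamma|f|\|\geq\|S_\gamma g\|$, which is exactly how the paper argues.) Where you genuinely deviate is the approximation step: the paper approximates $g$ from below by a simple function $h$ in the $L^{p,\infty}_r$-\emph{norm} and then asserts that the $L^{p,\infty}_r$-norm of the simple function $h$ is attained on a union $\bigcup_{i\in I}U_i$ of whole level sets (a true but unproved combinatorial claim), whereas you first fix a near-optimal set $A$ and then approximate $|f|\chi_A$ from below merely in the $L^r$-norm. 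Your variant sidesteps the subset-attainment claim entirely, which is a small but real simplification.

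One intermediate inequality in your write-up is wrong as stated, though the conclusion survives: you claim $C_\gamma\leq \mu(A)^{-\frac{1}{r}+\frac{1}{p}}\|f\chi_A\|_{L^r(\mu)}$, but $C_\gamma$ is computed with the measure of $B:=\bigcup_i V_i$, which may be strictly smaller than $\mu(A)$, and since the exponent $-\frac{1}{r}+\frac{1}{p}$ is negative this makes the prefactor \emph{larger}, not smaller; so the first inequality in that chain can fail. The correct route to $\gamma\in\Gamma$ is to test the norm of $f$ on $B$ itself: since $g\leq|f|\chi_B$ one has $C_\gamma=\mu(B)^{-\frac{1}{r}+\frac{1}{p}}\|g\|_{L^r(\mu)}\leq\mu(B)^{-\frac{1}{r}+\frac{1}{p}}\|f\chi_B\|_{L^r(\mu)}\leq\|f\|_{L^{p,\infty}_r}=1$. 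Note that the same sign observation is what makes your lower bound work ($\mu(B)^{-\frac{1}{r}+\frac{1}{p}}\geq\mu(A)^{-\frac{1}{r}+\frac{1}{p}}$ gives $C_\gamma\geq(1-\varepsilon)^2$), so with this one-line repair the argument is complete.
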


\begin{proof}
Let $\Gamma$ be the set of simple functions $f = \sum^n_{i=1}a_i\chi_{U_i}$, where $n\in \N$, $a_i >0$, $0< \mu(U_i) <\infty$ and $(U_i)^n_{i=1}$ is a disjoint sequence in $\Sigma$ with
\[ C_f = (\sum^n_{i=1}\mu(U_i))^{-\frac{1}{r}+\frac{1}{p}}\|\sum^n_{i=1}a_i\chi_{U_i}\|_r \leq 1.\]
For each $f\in \Gamma$, use the above results to choose a lattice homomorphism $S: L^{p,\infty}_r(\mu) \to L^{p,\infty}_1(\nu_f)$ so that $\|S_f\| \leq 1$ and $\|S_ff\| \geq C^r_f$.
Define \[T:  L^{p,\infty}_r(\mu) \to (\bigoplus_{f\in \Gamma}L^{p,\infty}_1(\nu_f))_\infty \quad\text{by} \quad Tg = (S_fg)_{f\in \Gamma}.\]
Clearly, $T$ is a lattice homomorphism and $\|T\|\leq 1$.
Suppose that $g\in L^{p,\infty}_r(\mu)_+$ with $\|g\|_{L^{p,\infty}_r}=1$.
For any $\ep >0$, there exists a non-negative simple function $h$ such that $g \geq h$ and $\|h\|_{L^{p,\infty}_r} \geq 1-\ep$.
Write $h = \sum^n_{i=1}a_i\chi_{U_i}$.  There exists $I\subseteq \{1,\dots,n\}$ so that
\[\|(\sum_{i\in I}\mu(U_i))^{-\frac{1}{r}+\frac{1}{p}}\sum_{i\in I}a_i\chi_{U_i}\|_r = \|h\|_{L^{p,\infty}_r}  \geq 1-\ep.\]
Then $f : = \sum_{i\in I}a_i\chi_{U_i}\in \Gamma$ with $C_f \geq 1-\ep.$
Thus,
\[ \|Tg\|\geq \|S_fg\| \geq \|S_ff\| \geq C_f^r \geq (1-\ep)^r,
\]
so that $\|Tg\|\geq 1$.  This shows that $T$ is an isometric embedding.
\end{proof}

\section{Subspace problem}\label{Sub problem}
In this section we characterize when an embedding $\iota: F\hookrightarrow E$  induces a lattice embedding $\overline{\iota}: \fbl^{(p,\infty)}[F]\hookrightarrow \fbl^{(p,\infty)}[E]$. The analogous problem for $\fbp$ was solved in \cite[Theorem 3.7]{OTTT} by making use of an extension theorem for regular operators due to Pisier \cite[Theorem 4]{Pisier_REG94}. This scheme of proof, however, does not seem to be  applicable to $\fbl^{(p,\infty)}$. For this reason, we develop in  \Cref{s4.1} an entirely new approach to the subspace problem which is based on push-outs.  Then, in \Cref{s4.2} we prove the injectivity of $\ell^p$ in the class of $p$-convex Banach lattices, which shows that our solution to the subspace problem for $\fbp$ is equivalent to the one in \cite{OTTT}. Finally, in \Cref{s4.3} we prove that $\ell^{p,\infty}$ is \emph{not} injective in the class of Banach lattices with upper $p$-estimates.
\medskip

We begin with some preliminaries. Let $F$ be a closed subspace of a Banach space $E$ and let $\iota:F\to E$ be the inclusion map. Let $q = \iota^*:E^*\to F^*$ and note that $q$ is weak$^*$-to-weak$^*$ continuous with $q(B_{E^*}) = B_{F^*}$. Hence, $f\circ q\in C(B_{E^*})$ if $f\in \FVL[F]$.

\begin{prop}\label{p5.1} 
The map $\overline{\iota}: \FVL[F]\to \FVL[E]$, $\overline{\iota}f = f\circ q$, is a vector lattice isomorphism from $\FVL[F]$ onto the sublattice $\cL$ of $\FVL[E]$ generated by $\{\delta_{\iota y}: y\in F\}$.
\end{prop}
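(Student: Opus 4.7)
The plan is to exploit the functional representation of $\FVL[\cdot]$ inside $C(B_{\cdot^*})$, together with the Hahn--Banach surjectivity $q(B_{E^*}) = B_{F^*}$, where $q = \iota^*$. Four things need checking: (i) $\overline{\iota}$ sends $\FVL[F]$ into $\FVL[E]$; (ii) $\overline{\iota}$ is a vector lattice homomorphism; (iii) its image equals $\cL$; and (iv) it is injective.

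First I would establish (ii): since $q : B_{E^*} \to B_{F^*}$ is weak$^*$-to-weak$^*$ continuous, precomposition with $q$ is clearly a linear lattice homomorphism from $C(B_{F^*})$ into $C(B_{E^*})$. Indeed, for $f,g \in C(B_{F^*})$ and $x^*\in B_{E^*}$, both $(f\vee g)\circ q(x^*)$ and $(f\circ q)\vee(g\circ q)(x^*)$ equal $f(qx^*)\vee g(qx^*)$, and similarly for $\wedge$, $|\cdot|$, and linear combinations.

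For (i) and (iii) the key one-line identity is $\delta_y \circ q = \delta_{\iota y}$ for every $y\in F$, which is immediate from $(\delta_y\circ q)(x^*) = (qx^*)(y) = x^*(\iota y)$. Now, by Proposition~\ref{p1.2}, any $f\in\FVL[F]$ can be written as $f = G(\delta_{y_1},\dots,\delta_{y_n})$ for some lattice--linear expression $G$ and some $y_1,\dots,y_n\in F$. Applying (ii),
$$
\overline{\iota}f = G(\delta_{y_1}\circ q,\dots,\delta_{y_n}\circ q) = G(\delta_{\iota y_1},\dots,\delta_{\iota y_n}) \in \cL.
$$
This simultaneously yields (i) and the inclusion $\overline{\iota}(\FVL[F])\subseteq\cL$. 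The reverse inclusion follows because $\overline{\iota}(\FVL[F])$ is a vector sublattice of $\FVL[E]$ containing every generator $\delta_{\iota y} = \overline{\iota}\delta_y$ of $\cL$, and hence contains $\cL$.

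Finally, step (iv) is where the surjectivity of $q$ is essential: if $\overline{\iota}f = f\circ q = 0$ on $B_{E^*}$, then by Hahn--Banach we have $q(B_{E^*}) = B_{F^*}$, so $f\equiv 0$ on $B_{F^*}$ and hence $f = 0$ in $C(B_{F^*})$, a fortiori in $\FVL[F]$. I do not anticipate any genuine obstacle: the only substantive input is the Hahn--Banach surjectivity $q(B_{E^*})=B_{F^*}$, and the remainder is a bookkeeping consequence of pointwise lattice operations commuting with precomposition by a continuous map.
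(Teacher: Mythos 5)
Your proof is correct and follows essentially the same route as the paper: the core ingredients are identical (the identity $\overline{\iota}\delta_y=\delta_{\iota y}$ and the fact that images and preimages of sublattices under a lattice homomorphism are sublattices), the only cosmetic difference being that you derive $\overline{\iota}(\FVL[F])\subseteq\cL$ from the lattice-linear representation of Proposition~\ref{p1.2}, whereas the paper applies the minimality of $\FVL[F]$ (Proposition~\ref{p1.1}) to the preimage sublattice $\overline{\iota}^{-1}\cL$. You also spell out injectivity via the Hahn--Banach surjectivity $q(B_{E^*})=B_{F^*}$, which the paper records just before the proposition and leaves implicit in its proof.
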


\begin{proof}
Clearly, $\overline{\iota}$ is a lattice homomorphism from $\FVL[F]$ into $C(B_{E^*})$.
Moreover, $(\overline{\iota}\delta_y)(x^*) =(\delta_y\circ q)(x^*) = \delta_{\iota y}(x^*)$ for all $y\in F$ and $x^*\in B_{E^*}$.
Hence, $\overline{\iota}\delta_y = \delta_{\iota y}$.
Thus, $\overline{\iota}\FVL[F]$ is a sublattice of $C(B_{E^*})$ that contains $\delta_{\iota y}$ for all $y\in F$, which implies that $\cL \subseteq \overline{\iota}\FVL[F]$.  On the other hand, $\overline{\iota}^{-1}\cL$ is a sublattice of $\FVL[F]$ containing $\delta_y$ for all $y\in F$.  Hence, $\overline{\iota}^{-1}\cL = \FVL[F]$, which implies that $\overline{\iota}\FVL[F]=\cL$.
\end{proof}


Note that  $\vp_E\iota: F\to \fbl^\cC[E]$ is a contraction (in fact, an isometry) and $\fbl^\cC[E] \in \ol{\cC}$.
By Proposition \ref{p2.1}, $\widehat{\vp_E\iota}: (\FVL[F],\rho_\cC)\to \fbl^\cC[E]$ is a contraction.

\begin{prop}\label{p5.2}
Let $\overline{\iota}$ be the map  from Proposition \ref{p5.1}.
Then $\widehat{\vp_E\iota}f = \overline{\iota}f$ for any $f\in \FVL[F]$.
\end{prop}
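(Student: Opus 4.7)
The plan is to invoke the uniqueness clause of the free vector lattice property (Proposition \ref{p1.3}). The map $\vp_E\iota: F \to \fbl^\cC[E]$, $y\mapsto \delta_{\iota y}$, admits by Proposition \ref{p1.3} a unique lattice homomorphic extension from $\FVL[F]$ to $\fbl^\cC[E]$, and this extension is precisely $\widehat{\vp_E\iota}$ (bounded by Proposition \ref{p2.1}). I would like to exhibit $\overline{\iota}$, composed with the canonical inclusion $\FVL[E] \hookrightarrow \fbl^\cC[E]$, as another such extension; then uniqueness forces $\widehat{\vp_E\iota}f = \overline{\iota}f$ on all of $\FVL[F]$.

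First, I would check agreement on the generators $\{\delta_y : y\in F\}$. On the one hand, $\widehat{\vp_E\iota}\delta_y = \vp_E(\iota y) = \delta_{\iota y}$ by the defining property of the extension. On the other hand, unwinding the definition of $\overline{\iota}$ and using $q = \iota^*$, for every $x^* \in B_{E^*}$ we have
\[
(\overline{\iota}\delta_y)(x^*) = (\delta_y\circ q)(x^*) = q(x^*)(y) = x^*(\iota y) = \delta_{\iota y}(x^*),
\]
so $\overline{\iota}\delta_y = \delta_{\iota y}$ as elements of $\FVL[E] \subseteq \fbl^\cC[E]$. Next, Proposition \ref{p5.1} shows that $\overline{\iota}$ is a vector lattice homomorphism into $\FVL[E]$, and the canonical inclusion $\FVL[E]\hookrightarrow \fbl^\cC[E]$ is a vector lattice homomorphism (as $\fbl^\cC[E]$ is, by construction, the norm completion of $(\FVL[E],\rho_\cC)$ as a normed vector lattice). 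Composing the two, I obtain a lattice homomorphism $\FVL[F]\to \fbl^\cC[E]$ that extends $y\mapsto \delta_{\iota y}$.

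Applying Proposition \ref{p1.3} with the Archimedean target $\fbl^\cC[E]$, both $\widehat{\vp_E\iota}$ and $\overline{\iota}$ are lattice homomorphic extensions of the same linear map $F\to \fbl^\cC[E]$, $y\mapsto \delta_{\iota y}$, and must therefore coincide. There is no real obstacle here: the identification is essentially a bookkeeping consequence of the uniqueness clause of the universal property, once the computation $\overline{\iota}\delta_y = \delta_{\iota y} = \widehat{\vp_E\iota}\delta_y$ is recorded.
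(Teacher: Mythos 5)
Your proof is correct, but it takes a genuinely different (more structural) route than the paper's. The paper argues by direct computation: it writes $f = G(\delta_{y_1},\dots,\delta_{y_n})$ for a lattice-linear expression $G$, evaluates $\overline{\iota}f$ pointwise on $B_{E^*}$ via $(\overline{\iota}f)(x^*) = f(qx^*)$ to obtain $\overline{\iota}f = G(\delta_{\iota y_1},\dots,\delta_{\iota y_n})$, and separately unwinds the construction of $\widehat{\vp_E\iota}$ from Proposition~\ref{p1.3} to obtain the same expression. You instead verify agreement only on the generators ($\overline{\iota}\delta_y = \delta_{\iota y} = \widehat{\vp_E\iota}\delta_y$), note that both maps are vector lattice homomorphisms from $\FVL[F]$ into the Archimedean lattice $\fbl^\cC[E]$ --- for $\overline{\iota}$ this correctly combines Proposition~\ref{p5.1} with the fact that $\FVL[E]$ sits as a sublattice of its norm completion --- and then invoke the uniqueness clause of Proposition~\ref{p1.3}. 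This is legitimate and is indeed the style the paper uses for analogous identifications elsewhere (e.g., $\widehat{\pi_\gamma T} = \pi_\gamma \wh{T}$ in the proof of Proposition~\ref{p2.1}); moreover, the uniqueness clause of Proposition~\ref{p1.3} is itself established by essentially the computation the paper repeats here, so the two arguments share the same mathematical core. What the paper's explicit computation buys is the formula $\overline{\iota}\,G(\delta_{y_1},\dots,\delta_{y_n}) = G(\delta_{\iota y_1},\dots,\delta_{\iota y_n})$, which is cited verbatim in the later proofs of Propositions~\ref{p5.3} and~\ref{p5.4}; what your version buys is brevity and a cleaner conceptual statement, delegating all bookkeeping to the universal property at the cost of not recording that explicit formula for later reuse.
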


\begin{proof}
Let $f\in \FVL[F]$. Suppose that $f = G(\delta_{y_1},\dots, \delta_{y_n})$, where $G$ is  a lattice-linear expression and $y_1,\dots, y_n\in F$. For any $x^*\in B_{E^*}$,
\begin{align*}
(\overline{\iota}f)(x^*) &= f(qx^*) = G(qx^*(y_1),\dots, qx^*(y_n)) = G(x^*(\iota y_1),\dots, x^*(\iota y_n))
\\
& = G(\delta_{ \iota y_1},\dots,\delta_{ \iota y_n})(x^*).
\end{align*}
Thus, $\overline{\iota}f =  G(\delta_{\iota y_1},\dots,\delta_{ \iota y_n})$.
On the other hand,
\[ \widehat{\vp_E\iota}(f) = G(\vp_E\iota y_1,\dots, \vp_E \iota y_n) = G(\delta_{ \iota y_1},\dots, \delta_{\iota y_n}).\]
Therefore, $\overline{\iota}= \widehat{\vp_E\iota}$.
\end{proof}

The operator $\overline{\iota}$ extends to a contraction from  $\fbl^\cC[F]$ into $\fbl^\cC[E]$, which we denote again by $\overline{\iota}$. The objective now is to determine when $\overline{\iota}$ is an embedding. We begin with the following elementary observation.
\begin{prop}\label{p5.3}
Suppose that for any $X\in \cC$, any contraction $T:F\to X$ extends to a contraction $S: E\to X$.
Then $\overline{\iota}$ is a lattice isometric embedding.
\end{prop}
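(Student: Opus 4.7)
The plan is to prove $\rho_\cC(\overline{\iota}f) = \rho_\cC(f)$ for every $f \in \FVL[F]$, since the reverse inequality is immediate from the fact that $\overline{\iota}$ is a contraction (this also suffices for general $f \in \fbl^\cC[F]$ by density and continuity).

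First I would unpack the definition of $\rho_\cC$: given $\epsilon > 0$ and $f \in \FVL[F]$, choose $X \in \cC$ and a linear contraction $T \colon F \to X$ such that
\[ \|\wh{T}f\|_X > \rho_\cC(f) - \epsilon. \]
By the extension hypothesis, there is a contraction $S \colon E \to X$ with $S \iota = T$. Since $X \in \cC \subseteq \overline{\cC}$, Proposition~\ref{p2.1} gives a lattice homomorphic contraction $\wh{S} \colon \fbl^\cC[E] \to X$ with $\wh{S}\vp_E = S$.

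The key step is the commutation $\wh{S}\circ\overline{\iota} = \wh{T}$ on $\FVL[F]$. Both sides are vector lattice homomorphisms from $\FVL[F]$ to $X$, so by the uniqueness part of Proposition~\ref{p1.3} it suffices to check the identity on the generators $\delta_y$, $y \in F$. Using Proposition~\ref{p5.2} (or Proposition~\ref{p5.1}) one has $\overline{\iota}\delta_y = \delta_{\iota y} = \vp_E(\iota y)$, and therefore
\[ \wh{S}(\overline{\iota}\delta_y) = \wh{S}\vp_E(\iota y) = S(\iota y) = Ty = \wh{T}\delta_y. \]

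Combining these ingredients,
\[ \rho_\cC(\overline{\iota}f) \;\geq\; \|\wh{S}(\overline{\iota}f)\|_X \;=\; \|\wh{T}f\|_X \;>\; \rho_\cC(f) - \epsilon, \]
so letting $\epsilon \downarrow 0$ yields $\rho_\cC(\overline{\iota}f) \geq \rho_\cC(f)$. Since $\overline{\iota}$ is already a contractive lattice homomorphism, this proves it is a lattice isometric embedding on $\FVL[F]$, and by density on all of $\fbl^\cC[F]$. There is no genuine obstacle here; the only thing to be careful about is invoking the universal/uniqueness property of the free vector lattice (Proposition~\ref{p1.3}) to identify $\wh{S}\circ\overline{\iota}$ with $\wh{T}$ rather than trying to verify this by hand on an arbitrary lattice-linear expression.
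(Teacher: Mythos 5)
Your proof is correct and follows essentially the same route as the paper's: extend the contraction $T$ to $S$, establish $\wh{S}\circ\overline{\iota}=\wh{T}$, and conclude $\rho_\cC(f)\leq\rho_\cC(\overline{\iota}f)$. The only (cosmetic) difference is that you verify the commutation via the uniqueness clause of Proposition~\ref{p1.3} on the generators $\delta_y$, whereas the paper checks it by direct computation on a lattice-linear expression $f=G(\delta_{y_1},\dots,\delta_{y_n})$; both are valid and of equal length.
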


\begin{proof}
In view of the preceding propositions and the fact that $\widehat{\vp_E\iota}$ is a contraction, it suffices to show that $\rho_\cC(f) \leq \rho_\cC(\overline{\iota}f)$ for all $f\in \FVL[F]$.
Let $X\in \cC$ and let $T:F\to X$ be a contraction.
There exists a contraction $S:E\to X$ so that $S \iota = T$.
Suppose that $f = G(\delta_{y_1},\dots, \delta_{y_n})$, where $G$ is  a lattice-linear expression and $y_1,\dots, y_n\in F$. We have
\[ \wh{S}(\overline{\iota}f) = \wh{S}\widehat{\vp_E\iota}(f) = G(S\iota y_1,\dots,S\iota y_n)= G(Ty_1,\dots, Ty_n) = \wh{T}f.
\]
Thus, $\|\wh{T}f\| \leq \rho_\cC(\overline{\iota}f)$.
Taking supremum over all such $T$ shows that $\rho_\cC(f) \leq \rho_\cC(\overline{\iota}f)$.
\end{proof}

Using the above results we may now characterize when $\fbl^\mathcal{C}[F]$ embeds via $\overline{\iota}$ onto a complemented sublattice of $\fbl^\mathcal{C}[E]$. This characterization should be contrasted with \cite[Proposition 3.12]{OTTT} and our solution to the subspace problem in \Cref{SS problem}. Note, in particular,  that if condition (1) in \Cref{t5.4} holds for a class $\mathcal{C}$ then it also holds for any class $\mathcal{C}'$  such that $\mathcal{C}'\subseteq \mathcal{C}$.
\begin{thm}\label{t5.4}
Let $F$ be a closed subspace of a Banach space $E$ and let $\iota : F\to E$ be the inclusion map.
Set $\overline{\iota} = \widehat{\vp_E \iota}$. 
The following are equivalent.
\begin{enumerate}
\item For any $X\in \ol{\cC}$, any bounded linear operator $T:F\to X$ has a bounded linear extension $\ti{T}:E\to X$ with $\|\ti{T}\|=  \|T\|$.
\item $\overline{\iota}$ is a lattice isometric embedding and there is a contractive  lattice homomorphic projection $P$ from $\fbl^\cC[E]$ onto $\overline{\iota}(\fbl^\cC[F])$.
\end{enumerate}
\end{thm}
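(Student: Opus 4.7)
The plan is to leverage the functoriality already established in Propositions \ref{p2.1} and \ref{p5.3}, together with the crucial observation from Proposition \ref{p2.2} that $\fbl^\cC[F]$ itself belongs to $\ol{\cC}$. For the implication $(1)\Rightarrow(2)$, the lattice-isometric-embedding part of the conclusion is immediate from Proposition \ref{p5.3}, since hypothesis (1) specializes to $X\in\cC\subseteq\ol{\cC}$. To construct the projection, I would apply hypothesis (1) to the canonical inclusion $\vp_F:F\to\fbl^\cC[F]$; this is legitimate precisely because $\fbl^\cC[F]\in\ol{\cC}$. The result is a contractive extension $\ti{T}:E\to\fbl^\cC[F]$ of $\vp_F$, which by Proposition \ref{p2.1} lifts to a contractive lattice homomorphism $\wh{\ti{T}}:\fbl^\cC[E]\to\fbl^\cC[F]$. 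A direct computation on generators yields
\[ \wh{\ti{T}}(\overline{\iota}\,\delta_y)=\wh{\ti{T}}\,\delta_{\iota y}=\ti{T}(\iota y)=\vp_F(y)=\delta_y \qquad (y\in F), \]
and since both $\wh{\ti{T}}$ and $\overline{\iota}$ are lattice homomorphisms, $\wh{\ti{T}}\circ\overline{\iota}$ agrees with the identity on the vector-lattice generators $\{\delta_y:y\in F\}$, hence on $\FVL[F]$ by the universal property and therefore on $\fbl^\cC[F]$ by density. Setting $P:=\overline{\iota}\circ\wh{\ti{T}}$ then yields a contractive lattice homomorphism with $P^2=\overline{\iota}\wh{\ti{T}}\overline{\iota}\wh{\ti{T}}=\overline{\iota}\wh{\ti{T}}=P$ and range $\overline{\iota}(\fbl^\cC[F])$, as the surjectivity of $\wh{\ti{T}}$ onto $\fbl^\cC[F]$ is automatic from $\wh{\ti{T}}\circ\overline{\iota}=\mathrm{id}$.

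For the reverse implication $(2)\Rightarrow(1)$, given $X\in\ol{\cC}$ and a bounded operator $T:F\to X$, I would rescale so that $\|T\|=1$ and invoke Proposition \ref{p2.1} to obtain the contractive lattice homomorphism $\wh{T}:\fbl^\cC[F]\to X$. Because $\overline{\iota}$ is an isometric embedding by assumption, the prescription $S(\overline{\iota}f):=\wh{T}f$ unambiguously defines a contractive lattice homomorphism $S:\overline{\iota}(\fbl^\cC[F])\to X$. Then $\ti{T}:=S\circ P\circ\vp_E:E\to X$ is contractive, and for $y\in F$ one has $P\,\delta_{\iota y}=\delta_{\iota y}$ (since $\delta_{\iota y}=\overline{\iota}\,\delta_y$ lies in the range of $P$), whence $\ti{T}(\iota y)=S(\delta_{\iota y})=\wh{T}\,\delta_y=Ty$. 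Thus $\ti{T}$ is the sought norm-preserving extension.

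The main thing to flag is that neither direction requires genuinely new machinery: the heart of the argument is the observation that condition (1) can be applied to $\vp_F$ itself, and this is exactly where Proposition \ref{p2.2}, placing $\fbl^\cC[F]$ inside $\ol{\cC}$, is indispensable. Without this, one would not have enough extension power to produce the lift $\wh{\ti{T}}$ that underlies the projection $P$, and the equivalence would collapse.
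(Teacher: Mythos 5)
Your proof is correct and follows essentially the same route as the paper's: both directions hinge on applying hypothesis (1) to $\vp_F$ (legitimate because $\fbl^\cC[F]\in\ol{\cC}$ by Proposition \ref{p2.2}), lifting the resulting contraction to a lattice homomorphism via Proposition \ref{p2.1}, checking the retraction identity on the generators $\delta_y$, and setting $P=\overline{\iota}\circ\wh{\ti{\vp_F}}$, with the converse using exactly the composite $\wh{T}\,\overline{\iota}^{-1}P\vp_E$. The only cosmetic difference is that you delegate the isometric-embedding claim to Proposition \ref{p5.3}, whereas the paper reads it off directly from the retraction identity $\wh{\ti{\vp_F}}\,\overline{\iota}=\mathrm{id}$ that you establish anyway.
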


\begin{proof}
(1)$\implies$(2):  By the above, $\overline{\iota}$ is a contraction. Furthermore, $\overline{\iota} =\widehat{\vp_E\iota}$ is a lattice homomorphism.
Since $\fbl^\cC[F]\in \ol{\cC}$, the isometric embedding $\vp_F: F\to \fbl^\cC[F]$ has a bounded linear extension $\ti{\vp_F}: E\to \fbl^\cC[F]$ such that $\|\ti{\vp_F}\| = 1$.
Therefore,  $\widehat{\ti{\vp_F}}:\fbl^\cC[E] \to \fbl^\cC[F]$ is a lattice homomorphism such that $\|\widehat{\ti{\vp_F}}\| = \|\ti{\vp_F}\|=1$ and  $\widehat{\ti{\vp_F}}\vp_E = \ti{\vp_F}$.
For $y\in F$, it is easy to see that
\[ \widehat{\ti{\vp_F}}\overline{\iota} \delta_y = \widehat{\ti{\vp_F}}\delta_{\iota y} = \ti{\vp_F}(\iota y) = \vp_Fy = \delta_y.\]
Since $\widehat{\ti{\vp_F}}\overline{\iota}$ is a lattice homomorphism, it follows that $\widehat{\ti{\vp_F}}\overline{\iota}$ is the identity map on $\fbl^\cC[F]$.  In particular,
\[\rho_\cC(f) = \rho_\cC(\widehat{\ti{\vp_F}}\overline{\iota}f) \leq  \rho_\cC(\overline{\iota}f) \leq \rho_\cC(f)\text{ for any $f\in \fbl^\cC[F]$.}
\]
Hence, $\overline{\iota}$ is a lattice isometric embedding. Furthermore, $P:= \overline{\iota} \widehat{\ti{\vp_F}}$ is a lattice homomorphic projection on $\fbl^\cC[E]$ such that $P\fbl^\cC[E] = \overline{\iota} \fbl^\cC[F]$ and  $\|P\|=1$.
\medskip

\noindent (2)$\implies$(1):  Let $X\in \ol{\cC}$ and let $T: F\to X$ be a bounded linear operator.
Then $\wh{T}:\fbl^\cC[F] \to X$ is a lattice homomorphism such that $\|\wh{T}\| = \|T\|$ and  $\wh{T}\vp_F = T$.
Let $\ti{T} = \wh{T}\overline{\iota}^{-1}P\vp_E: E\to X$.
For any $y\in F$,
\[ \ti{T}y =  \wh{T}\overline{\iota}^{-1}P\delta_{\iota y} = \wh{T}\overline{\iota }^{-1}\delta_{\iota y} =\wh{T}\delta_y = Ty.
\]
Hence, $\ti{T}$ is a bounded linear extension of $T$.
Clearly, $\|\ti{T}\|\leq  \|\wh{T}\| = \|T\|$.  Hence, $\|\ti{T}\|
= \|T\|$.
\end{proof}
We conclude this preliminary subsection with another simple observation.
\begin{prop}\label{p5.4}
Assume that $\overline{\iota }:\fbl^\cC[F]\to \fbl^\cC[E]$ is a lattice isometric embedding.
Let $X\in \cC$ and let $T:F\to X$ be a bounded linear operator.
The following are equivalent.
\begin{enumerate}
\item There exists an operator $S:E\to X$ such that  $S\iota = T$ and  $\|S\| = \|T\|$.
\item There exists a lattice homomorphism $R: \fbl^\cC[E] \to X$ such that $R\overline{\iota} = \wh{T}$ and  $\|R\| = \|T\|$.
\end{enumerate}
\end{prop}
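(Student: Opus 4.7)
The plan is to deduce both implications by direct manipulation of the universal property of $\fbl^\cC[E]$ (available because $X\in\cC$) together with the identity $\overline{\iota}=\widehat{\vp_E\iota}$ from \Cref{p5.2}. Both lattice homomorphisms in question will be shown to coincide on the generating set $\{\delta_y:y\in F\}$, and both operators in question will be shown to coincide on $\iota(F)\subseteq E$.

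For $(1)\Rightarrow(2)$, given a norm-preserving linear extension $S:E\to X$ of $T$, I would simply set $R:=\wh{S}:\fbl^\cC[E]\to X$, invoking the universal property of $\fbl^\cC[E]$ (valid since $X\in\cC$) to obtain a lattice homomorphism with $\|R\|=\|S\|=\|T\|$ and $R\vp_E=S$. To see that $R\overline{\iota}=\wh{T}$, note that both are lattice homomorphisms defined on $\fbl^\cC[F]$, so it suffices to check equality on the generators: for each $y\in F$,
\[
R\overline{\iota}\delta_y \;=\; R\delta_{\iota y} \;=\; \wh{S}\vp_E(\iota y) \;=\; S\iota y \;=\; Ty \;=\; \wh{T}\delta_y.
\]

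For $(2)\Rightarrow(1)$, given the lattice homomorphism $R$ with $R\overline{\iota}=\wh{T}$ and $\|R\|=\|T\|$, I would define $S:=R\vp_E:E\to X$. Since $\vp_E$ is a linear isometric embedding, $\|S\|\leq \|R\|\,\|\vp_E\|=\|T\|$. To see that $S$ extends $T$, for any $y\in F$ we have
\[
S\iota y \;=\; R\vp_E(\iota y) \;=\; R\delta_{\iota y} \;=\; R\overline{\iota}\delta_y \;=\; \wh{T}\delta_y \;=\; Ty.
\]
Because $\iota$ is an isometric embedding, $\|T\|=\|S\iota\|\leq \|S\|$, and combined with the previous upper bound we obtain $\|S\|=\|T\|$. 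I don't foresee any real obstacle here: the statement is a formal compatibility between the two universal properties (operators on $E$ extending to $X$ versus lattice homomorphisms on $\fbl^\cC[E]$ extending $\wh{T}$), and the only ingredients needed are \Cref{p5.2} and the universal property of the free Banach lattice associated to $\cC$.
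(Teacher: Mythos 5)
Your proof is correct and follows essentially the same route as the paper's: take $R=\wh{S}$ for (1)$\Rightarrow$(2) and $S=R\vp_E$ for (2)$\Rightarrow$(1), using $\overline{\iota}\delta_y=\delta_{\iota y}$ from \Cref{p5.2} to verify the intertwining relations. The only cosmetic difference is that the paper checks $\wh{S}\overline{\iota}=\wh{T}$ by computing on a general lattice-linear expression $G(\delta_{y_1},\dots,\delta_{y_n})$, while you check equality on the generators $\delta_y$ and invoke uniqueness of lattice homomorphic extensions plus density, which is an equivalent justification; your norm argument in (2)$\Rightarrow$(1), via $\|S\|\leq\|R\|=\|T\|$ and $\|T\|=\|S\iota\|\leq\|S\|$, is in fact slightly more careful than the paper's.
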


\begin{proof}
(1)$\implies$(2): Suppose that $S$ is as given.  Then $\wh{S}:\fbl^\cC[E] \to X$ is a lattice homomorphism such that $\|\wh{S}\| = \|S\|= \|T\|$.
If $f= G(\delta_{y_1},\dots, \delta_{y_n})\in \FVL[F]$, then from the proof of Proposition \ref{p5.2}, $\overline{\iota}f = G(\delta_{\iota y_1},\dots, \delta_{\iota y_n})$. Thus,
\[ \wh{S}\overline{\iota}f = \wh{S}G(\delta_{\iota y_1},\dots, \delta_{\iota y_n}) = G(S\iota y_1,\dots, S\iota y_n) = G(Ty_1,\dots, Ty_n)
= \wh{T}f,\]
so (2) holds with $R= \wh{S}$.
\medskip

\noindent(2)$\implies$(1):  Assume that $R$ is as given.
Let $S = R \vp_E: E\to X$. Then $\|S\| = \|R\| = \|T\|$.
Moreover, if $y\in F$ then
\[ S\iota y = R\vp_E\iota y  = R \delta_{\iota y} = R\overline{\iota}\delta_y = \wh{T}\delta_y = Ty.\]
Thus,  (1) holds.
\end{proof}




\subsection{Push-outs in \texorpdfstring{$\mathcal C$}{}}\label{s4.1}

Recall that for objects $A_0,A_1,A_2$ and morphisms $\alpha_i:A_0\rightarrow A_i$, $i=1,2$, a push-out diagram is an object $PO=PO(\alpha_1,\alpha_2)$ together with morphisms $\beta_i:A_i\rightarrow PO$, $i=1,2$, making the following diagram commutative
\begin{center}
\begin{tikzcd}A_1\arrow[r, "\beta_1"]& PO \\A_0\arrow[r, "\alpha_2"]\arrow[u,"\alpha_1"]& A_2\arrow[u, "\beta_2"]\end{tikzcd}
\end{center}
and with the universal property that if $\beta'_i:A_i\rightarrow B$ are such that $\beta'_1\alpha_1=\beta'_2\alpha_2$, then there is a unique $\gamma:PO\rightarrow B$ such that $\gamma\beta_i=\beta'_i$ for $i=1,2,$ as follows:
\begin{center}
\begin{tikzcd}&&B\\ A_1\arrow[r, "\beta_1"]\arrow[bend left, rru, "\beta'_1"]& PO\arrow[ru, "\gamma"] &\\A_0\arrow[r, "\alpha_2"]\arrow[u,"\alpha_1"]& A_2\arrow[u, "\beta_2"]\arrow[bend right, uur, "\beta'_2"]&\end{tikzcd}
\end{center}

In the category of Banach lattices and lattice homomorphisms, isometric push-outs were shown to exist in \cite{Aviles-Tradacete} -- these are push-outs satisfying the extra condition that $\max\{\|\beta_1\|,\|\beta_2\|\}\leq 1$ and which guarantee the inequality $\|\gamma\|\leq \max\{ \|\beta'_1\|,\|\beta'_2\| \}$ in the universal property.
The construction of such push-outs can be adapted to Banach lattices within certain classes $\overline{\mathcal C}$, as follows.

\begin{thm}\label{push-out}
Let $\mathcal C$ be a class of Banach lattices such that $\overline{\mathcal C}$ is closed under lattice quotients. Given Banach lattices $X_0,X_1,X_2$ in $\overline{\mathcal C}$ and lattice homomorphisms $T_i:X_0\rightarrow X_i$ for $i=1,2$, there is a Banach lattice $PO^{\overline{\mathcal C}}$ in  $\overline{\mathcal C}$ and lattice homomorphisms $S_1,S_2$  so that the following is an isometric push-out diagram in $\overline{\mathcal C}$:
\begin{center}
\begin{tikzcd}X_1\arrow[r, "S_1"]& PO^{\overline{\mathcal C}} \\X_0\arrow[r, "T_2"]\arrow[u,"T_1"]& X_2\arrow[u, "S_2"]\end{tikzcd}
\end{center}
\end{thm}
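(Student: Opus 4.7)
The plan is to adapt the isometric push-out construction for arbitrary Banach lattices from \cite{Aviles-Tradacete} to the subcategory $\ol{\cC}$. First I would produce an ambient Banach lattice $Z \in \ol{\cC}$ together with contractive lattice homomorphisms $\iota_i\colon X_i \to Z$ that realise $Z$ as the coproduct of $X_1, X_2$ inside $\ol{\cC}$: for every $B \in \ol{\cC}$ and every pair of lattice homomorphisms $R_1\colon X_1 \to B$ and $R_2\colon X_2 \to B$ there should exist a unique lattice homomorphism $\widetilde R\colon Z \to B$ with $\widetilde R\iota_i = R_i$ and $\|\widetilde R\| \le \max\{\|R_1\|, \|R_2\|\}$. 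Once $Z$ is in hand, let $J$ be the closed ideal of $Z$ generated by $\{\iota_1 T_1 x - \iota_2 T_2 x : x \in X_0\}$, define $PO^{\ol\cC} := Z/J$, and take $S_i$ to be the composition of $\iota_i$ with the quotient map. The hypothesis that $\ol{\cC}$ is closed under lattice quotients immediately guarantees $PO^{\ol\cC} \in \ol{\cC}$, and by construction $S_1, S_2$ are contractive lattice homomorphisms with $S_1 T_1 = S_2 T_2$.

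The universal property should then fall out quickly. Given lattice homomorphisms $R_i\colon X_i \to B$ into some $B \in \ol{\cC}$ with $R_1 T_1 = R_2 T_2$, the coproduct property of $Z$ produces a unique lattice homomorphism $\widetilde R\colon Z \to B$ extending the $R_i$, with norm at most $\max\{\|R_1\|, \|R_2\|\}$. Because $\widetilde R$ is a lattice homomorphism its kernel is a closed ideal, and the compatibility relation $R_1 T_1 = R_2 T_2$ forces every generator of $J$ to lie in this kernel, so $J \subseteq \ker \widetilde R$. Hence $\widetilde R$ descends to a lattice homomorphism $\gamma\colon PO^{\ol\cC} \to B$ satisfying $\gamma S_i = R_i$ and $\|\gamma\| \le \max\{\|R_1\|, \|R_2\|\}$, and uniqueness of $\gamma$ is inherited from that of $\widetilde R$.

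The hard part will be producing the coproduct $Z$ inside $\ol{\cC}$. In the unrestricted Banach lattice category the required coproduct exists by \cite{Aviles-Tradacete}, and the remaining issue is to arrange that it stays in $\ol{\cC}$ whenever both summands do. My preferred approach is to realise $Z$ as a further lattice quotient of a visibly $\ol{\cC}$-valued object, namely $\fbl^{\ol\cC}[X_1 \oplus_\infty X_2]$, which lies in $\ol{\cC}$ by \Cref{p2.2}; one quotients by the closed ideal generated by the relations that force the canonical inclusions $X_i \hookrightarrow \fbl^{\ol\cC}[X_1 \oplus_\infty X_2]$ (coming from $\delta_{(\cdot,0)}$ and $\delta_{(0,\cdot)}$) to be lattice homomorphic, for example of the form $|\delta_{(x_1,0)}| - \delta_{(|x_1|,0)}$ and its analogue for $X_2$. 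The quotient-closure hypothesis keeps $Z$ in $\ol{\cC}$, and the subsequent quotient by $J$ stays there for the same reason, completing the construction.
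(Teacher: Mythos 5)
Your overall architecture is exactly the paper's: build the free Banach lattice over the direct sum $X_1\oplus X_2$ (which lies in $\ol\cC$ by \Cref{p2.2}), quotient by the closed ideal forcing the canonical inclusions to be lattice homomorphisms and imposing the amalgamation relations $\iota_1T_1x=\iota_2T_2x$, and use closedness of $\ol\cC$ under lattice quotients to stay in the class. (The paper does this in a single quotient of $\fbl^{\cC}[X_1\oplus_1 X_2]$ by the ideal generated by all three families of relations and then cites \cite[Theorem 4.3]{Aviles-Tradacete}; your two-stage quotient --- first the coproduct, then the ideal $J$ --- is equivalent by the usual isomorphism theorem for quotients.)

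However, there is one genuine gap: you take the $\ell^\infty$-sum $X_1\oplus_\infty X_2$, whereas the construction requires the $\ell^1$-sum $X_1\oplus_1 X_2$. The isometric push-out property demands $\|\gamma\|\leq\max\{\|R_1\|,\|R_2\|\}$, and the only source of this bound is the norm of the linear glue map $R\colon(x_1,x_2)\mapsto R_1x_1+R_2x_2$ together with \Cref{p2.1}, which gives $\|\widehat R\|=\|R\|$. One has $\|R\|\leq\max\{\|R_1\|,\|R_2\|\}$ precisely for the $\ell^1$-norm on the sum; for the $\ell^\infty$-norm one only gets $\|R\|\leq\|R_1\|+\|R_2\|$, and this is sharp. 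Concretely, take $X_0=\{0\}$, $X_1=X_2=B=\R$ (note $\R\in\ol\cC$) and $R_1=R_2=\mathrm{id}$: in your construction $\|S_1(1)+S_2(1)\|_{PO}\leq\|(1,1)\|_\infty=1$ while $\gamma(S_1(1)+S_2(1))=2$, so $\|\gamma\|\geq 2>\max\{\|R_1\|,\|R_2\|\}$. Thus your object satisfies the push-out universal property with the weaker estimate $\|\gamma\|\leq\|R_1\|+\|R_2\|$, but it is not an \emph{isometric} push-out, which is what the theorem asserts and what is quantitatively needed downstream (the constant $K_\cC$ in \Cref{parallelisometries}, and hence the constants in \Cref{SS problem}, rely on the max-estimate). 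The fix is immediate: replace $\oplus_\infty$ by $\oplus_1$ throughout; the inclusions $j_i\colon X_i\to X_1\oplus_1X_2$ are still isometric, so the $S_i$ remain contractive, and the rest of your argument then goes through exactly as in the paper.
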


\begin{proof}
Let $X_1\oplus_1 X_2$ denote the direct sum equipped with the norm $\|(x_1,x_2)\|=\|x_1\|+\|x_2\|$ and let $j_i:X_i\rightarrow X_1\oplus_1 X_2$ denote the canonical embedding for $i=1,2$. Let $\phi:X_1\oplus_1 X_2\rightarrow \fbl^\cC[X_1\oplus_1 X_2]$ be the canonical embedding and $Z$ be the (closed) ideal in $\fbl^{\mathcal{C}}[X_1\oplus_1 X_2]$ generated by the families $(\phi(j_1 |x|)-|\phi(j_1 x)|)_{x\in X_1}$, $(\phi(j_2 |y|)-|\phi(j_2 y)|)_{y\in X_2}$ and $(\phi j_1T_1z-\phi j_2T_2z)_{z\in X_0}$. Let
$$
PO^{\overline{\mathcal C}}=\fbl^{\mathcal{C}}[X_1\oplus_1 X_2]/Z,
$$
and let $S_i=q\phi j_i:X_i\rightarrow PO^{\overline{\mathcal C}}$, for $i=1,2$, where $q:\fbl^{\mathcal{C}}[X_1\oplus_1 X_2]\rightarrow PO^{\overline{\mathcal C}}$ denotes the canonical quotient map.
\medskip

Since $\overline{\mathcal C}$ is closed under lattice quotients and $\fbl^{\mathcal{C}}[X_1\oplus_1 X_2]$ is in $\overline{\mathcal C}$, so is $PO^{\overline{\mathcal C}}$. The rest of the proof follows exactly as in \cite[Theorem 4.3]{Aviles-Tradacete}.
\end{proof}

\Cref{push-out} applies, in particular, when $\mathcal{C}$ is the class of $p$-convex Banach lattices or the class of Banach lattices with an upper $p$-estimate (with constant one). Moreover, as in \cite[Theorem 4.4]{Aviles-Tradacete}, for these classes we  have the following.

\begin{thm}\label{parallelisometries}
Let $\mathcal C$ be the class of $p$-convex Banach lattices or Banach lattices with an upper $p$-estimate. Set $K_{\mathcal C}=2^{1-\frac1p}$ if $\cC=\cC_p$ and $K_{\mathcal C}=2^{1-\frac1p}\gamma_p$ if $\cC=\cC_{p,\infty}$, and let
\begin{center}
\begin{tikzcd}X_1\arrow[r, "\widetilde{T}_1"]& PO^{\overline{\mathcal C}} \\X_0\arrow[r, "T_2"]\arrow[u,"T_1"]& X_2\arrow[u, "\widetilde{T}_2"]\end{tikzcd}
\end{center}be an isometric push-out diagram in $\overline{\mathcal C}$. If $\|T_1\|\leq 1$ and the lower arrow $T_2$ is an isometric embedding then the upper arrow $\widetilde{T}_1$ is a $K_{\mathcal C}$-embedding.
\end{thm}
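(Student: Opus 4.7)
The plan is to establish the lower bound $\|\widetilde T_1 x_1\|_{PO^{\overline{\cC}}} \geq \|x_1\|/K_{\cC}$ for each $x_1\in X_1$. Unwinding the construction of $PO^{\overline{\cC}}$ in \Cref{push-out} as the quotient $\fbl^\cC[X_1\oplus_1 X_2]/Z$ and invoking the universal property of the free Banach lattice (\Cref{p2.1}), lattice homomorphisms $PO^{\overline{\cC}}\to Y$ of norm at most $1$ (for $Y\in\overline{\cC}$) correspond bijectively to pairs $(U_1,U_2)$ of lattice homomorphisms $U_i:X_i\to Y$ satisfying $\max(\|U_1\|,\|U_2\|)\leq 1$ and $U_1T_1=U_2T_2$ (the correspondence being $L(x_1,x_2)=U_1 x_1+U_2 x_2$, where the $\ell^1$-norm on $X_1\oplus_1 X_2$ turns the bound $\|L\|\leq 1$ into $\max\|U_i\|\leq 1$, and the defining relations of $Z$ translate into $U_i$ being lattice homomorphisms with $U_1T_1=U_2T_2$). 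Consequently,
\[ \|\widetilde T_1 x_1\|_{PO^{\overline{\cC}}} = \sup\,\|U_1 x_1\|_Y, \]
where the supremum ranges over all such admissible pairs, and it suffices to produce, for each $\epsilon>0$, an admissible pair with $\|U_1 x_1\|_Y \geq (\|x_1\|-\epsilon)/K_{\cC}$.

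To build $U_1$, I would apply \Cref{p3.3} to $X_1\in\overline{\cC}$: in the $p$-convex case, $X_1$ embeds lattice-isometrically into $(\bigoplus_\gamma L^p(\mu_\gamma))_\infty$, and in the $(p,\infty)$-convex case it embeds with distortion at most $\gamma_p$ into $(\bigoplus_\gamma L^{p,\infty}(\mu_\gamma))_\infty$. Projecting onto an index $\gamma^\ast$ nearly realising the sup-norm of $Jx_1$ yields a lattice homomorphism $W_1:X_1\to L$ for some $L\in\cC$, with $\|W_1\|\leq 1$ in the $p$-convex case (respectively $\leq \gamma_p$ in the $(p,\infty)$-convex case) and $\|W_1 x_1\|\geq \|x_1\|-\epsilon$.

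The main obstacle, and where the factor $2^{1-1/p}$ is paid, is to extend the lattice homomorphism $W_1T_1:X_0\to L$ along the isometric embedding $T_2:X_0\hookrightarrow X_2$ to a lattice homomorphism $W_2:X_2\to \widetilde L$ for a suitable enlargement $L\hookrightarrow\widetilde L$ in $\overline{\cC}$, with $\|W_2\|$ controlled by $2^{1-\frac{1}{p}}\|W_1T_1\|$. Since $L^p(\mu)$ and $L^{p,\infty}(\mu)$ need not be injective in the relevant class, an exact extension within $L$ is impossible; the target must be enlarged to $\widetilde L=(L\oplus V)_\infty$ for some auxiliary $V\in\cC$. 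The construction decomposes each $x\in X_2$ approximately as a part in $T_2(X_0)$, sent to $L$ via $W_1T_1$, together with a complementary part sent into $V$; combining these two disjoint contributions in a $p$-convex setting via
\[ \|a\|+\|b\|\leq 2^{1-\frac{1}{p}}\bigl(\|a\|^p+\|b\|^p\bigr)^{1/p} \]
is precisely what produces the factor $2^{1-1/p}$. Setting $U_1 = K_{\cC}^{-1}\iota\circ W_1$ and $U_2 = K_{\cC}^{-1}W_2$ with $\iota:L\hookrightarrow \widetilde L$ the canonical inclusion then gives an admissible pair with $\|U_1 x_1\|_{\widetilde L}\geq (\|x_1\|-\epsilon)/K_{\cC}$, and letting $\epsilon\to 0$ concludes the proof.
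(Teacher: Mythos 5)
Your opening reduction is sound: by the isometric push-out property, $\|\widetilde T_1 x_1\|_{PO^{\overline{\cC}}}$ is indeed the supremum of $\|U_1x_1\|_Y$ over pairs of contractive lattice homomorphisms $U_i:X_i\to Y$ with $Y\in\overline{\cC}$ and $U_1T_1=U_2T_2$ (using that $PO^{\overline{\cC}}\in\overline{\cC}$ is normed by contractive lattice homomorphisms into members of $\cC$), so it suffices to exhibit one such pair, scaled by $K_{\cC}^{-1}$, nearly norming $x_1$. The genuine gap is the step you yourself flag as the main obstacle: extending the lattice homomorphism $W_1T_1:X_0\to L$ along the isometric lattice embedding $T_2$ to a lattice homomorphism $W_2:X_2\to\widetilde L$ satisfying the \emph{exact} identity $\iota W_1T_1=W_2T_2$ with $\|W_2\|\leq 2^{1-\frac1p}\|W_1T_1\|$. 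This extension statement carries essentially the full strength of the theorem (compare the equivalence between extension properties and embedding of push-out arrows implicit in \Cref{t5.4} and \Cref{SS problem}), and the mechanism you propose does not exist: $T_2(X_0)$ is merely a closed sublattice of $X_2$, not a band or a complemented subspace, so there is no decomposition of $x\in X_2$ into a part in $T_2(X_0)$ plus a disjoint complementary part, and no approximate version of such a decomposition can be chosen linearly, let alone lattice-homomorphically. Moreover, even if it could, it would only yield $U_1T_1\approx U_2T_2$, whereas the universal property requires exact commutativity. The scalar inequality $\|a\|+\|b\|\leq 2^{1-\frac1p}(\|a\|^p+\|b\|^p)^{\frac1p}$ correctly locates where the constant should come from, but it is not a construction. (A minor additional point: in the $(p,\infty)$ case the coordinate map $W_1$ obtained from \Cref{p3.3} has $\|W_1\|\leq\gamma_p$ rather than $\leq 1$, so the bookkeeping of where $\gamma_p$ enters also needs care.)

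The paper avoids the extension problem altogether. It first invokes the isometric push-out in the \emph{unrestricted} category of Banach lattices (\cite[Theorem 4.4]{Aviles-Tradacete}), where the parallel arrow $\widetilde T_1$ is already an isometric embedding; this supplies an exactly commuting pair of contractive lattice homomorphisms for free. It then chooses a positive norm-one functional $x^\ast$ on that push-out with $x^\ast(|\widetilde T_1x_1|)=\|x_1\|$, passes to $L^1(\nu_{x^\ast})$ via Kakutani, and applies Maurey's (resp.\ Pisier's) factorization theorem to the set $A=A_1\cup A_2$, where $A_i=\psi_{x^\ast}(\widetilde T_i(B_{X_i}))$; the factor $2^{1-\frac1p}$ (times $\gamma_p$ in the weak case) arises precisely in verifying the factorization hypothesis for the union of the two image balls, which is the rigorous incarnation of your ``two disjoint contributions'' heuristic. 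To repair your argument you would need exactly these two inputs: a source of exactly commuting pairs (the ordinary Banach lattice push-out together with a norming positive functional) and a factorization theorem upgrading $L^1$-valued data to $L^p$- or $L^{p,\infty}$-valued data.
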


\begin{proof}
Fix $x_1\in X_1$. We aim to  show that $\|x_1\| \leq K_{\mathcal C} \|\widetilde{T}_1(x_1)\|$. The reverse inequality is trivially true since $\|\widetilde{T}_1\|\leq 1$.  Since we are dealing with lattice norms and lattice homomorphisms, we can suppose that $x_1$ is positive. By the push-out universal property, as given in Theorem~\ref{push-out}, it is enough to find a commutative diagram in $\overline{\mathcal C}$
	\begin{center}
		\begin{tikzcd}X_1\arrow[r, "\widehat{T}_1"]& Z \\X_0\arrow[r, "T_2"]\arrow[u,"T_1"]& X_2\arrow[u, "\widehat{T}_2"]\end{tikzcd}
	\end{center}
such that $\|x_1\|\leq \|\widehat{T}_1(x_1)\|$ and $\max\{ \|\widehat{T}_1\|,\|\widehat{T}_2\| \} \leq K_{\mathcal C}$.
\medskip

To this end, by \cite[Theorem 4.4]{Aviles-Tradacete} we have a Banach lattice isometric push-out diagram 
	\begin{center}
		\begin{tikzcd}X_1\arrow[r, "\widetilde{T}_1"]& PO \\X_0\arrow[r, "T_2"]\arrow[u,"T_1"]& X_2\arrow[u, "\widetilde{T}_2"]\end{tikzcd}
	\end{center}
such that $\widetilde T_1$ is an isometric embedding. Hence, by \cite[Lemma 3.3]{Aviles-Tradacete}, we can pick a positive $\sigma$-finite element $x^\ast\in PO^\ast$ of norm one such that $x^\ast(|\widetilde T_1 x_1|) = \|x_1\|$.
\medskip

The functional $x^\ast$ induces a lattice semi-norm on $PO$ given by $\|z\|_{x^\ast} = x^\ast(|z|)$. After making a quotient by the elements of norm 0, this becomes a norm that satisfies $\||x|+|y|\|_{x^\ast} = \|x\|_{x^\ast} +\|y\|_{x^\ast}$ for all $x,y$. This identity extends to the completion of this normed lattice, which, by Kakutani's representation theorem \cite[Theorem 1.b.2]{LT2}, is lattice isometric to a Banach lattice of the form $L^1(\nu_{x^\ast})$. The formal identity induces a homomorphism $\psi_{x^\ast}:PO\rightarrow L^1(\nu_{x^\ast})$ of norm one. 
\medskip

For $i=1,2,$ let $A_i=\psi_{x^\ast}(\widetilde{T}_i(B_{X_i}))$ and set $A=A_1\cup A_2\subseteq L^1(\nu_{x^\ast})$. We will distinguish the following cases:

\begin{enumerate}
    \item Let $\mathcal C=\cC_p$ denote the class of $p$-convex Banach lattices (with constant $1$). Since the $X_i$ are $p$-convex and the $\Psi_{x^*}\widetilde{T}_i$ are contractive lattice homomorphisms we have that for all finitely supported sequences $(\al_i)_{i\in I}$ of real numbers and $(f_i)_{i\in I}\subseteq A$,
\[ \Big\|(\sum |\al_if_i|^p)^{\frac{1}{p}}\Big\|_1 \leq \Big\|(\sum_{f_i\in A_1} |\al_if_i|^p)^{\frac{1}{p}}\Big\|_1+\Big\|(\sum_{f_i\in A_2} |\al_if_i|^p)^{\frac{1}{p}}\Big\|_1\leq 2^{1-\frac{1}{p}}(\sum |\al_i|^p)^{\frac{1}{p}}.\]
    Hence, by \Cref{t3.1}, there is $g\in L^1(\nu_{x^\ast})_+$ with $\|g\|_1\leq1$ and lattice homomorphisms $\widehat{T}_i:X_i\rightarrow L^p(g\nu_{x^\ast})$ that make the following diagram commutative
    \begin{center}
		\begin{tikzcd}X_i\arrow[r, "\widetilde{T}_i"]\arrow[rd, swap, "\widehat{T}_i"]& PO\arrow[r, "\psi_{x^\ast}"] & L^1(\nu_{x^\ast})\\& L^p(g\nu_{x^\ast})\arrow[ru, swap, "M_g"]&\end{tikzcd}
	\end{center}
    with $\|\widehat{T}_i\|\leq 2^{1-\frac1p}=K_{\cC_p}$ and $M_g$ the operator of multiplication by $g$. It follows that
    \begin{center}
		\begin{tikzcd}X_1\arrow[r, "\widehat{T}_1"]& L^p(g\nu_{x^\ast}) \\X_0\arrow[r, "T_2"]\arrow[u,"T_1"]& X_2\arrow[u, "\widehat{T}_2"]\end{tikzcd}
	\end{center}
    is a commutative diagram in $\cC_p$ and
    \[\|x_1\|=x^\ast|\widetilde T_1 x_1|=\|\psi_{x^\ast}\widetilde T_1 x_1\|_{L^1(\nu_{x^\ast})}\leq \|\widehat{T}_1 x_1\|_{L^p(g\nu_{x^\ast})}.\]
    \item Let $\mathcal C=\cC_{p,\infty}$ denote the class of Banach lattices satisfying an upper $p$-estimate (with constant $1$). Since the $X_i$ satisfy an upper $p$-estimate and the $\Psi_{x^*}\widetilde{T}_i$ are contractive lattice homomorphisms we have that for all finitely supported sequences $(\al_i)_{i\in I}$ of real numbers and $(f_i)_{i\in I}\subseteq A$,
\[ \Big\|\bigvee |\al_if_i|\Big\|_1 \leq \Big\|\bigvee_{f_i\in A_1} |\al_if_i|\Big\|_1+\Big\|\bigvee_{f_i\in A_2} |\al_if_i|\Big\|_1\leq 2^{1-\frac{1}{p}}(\sum |\al_i|^p)^{\frac{1}{p}}.\]
    Hence, by  \Cref{t3.2}, there is $g\in L^1(\nu_{x^\ast})_+$ with $\|g\|_1\leq1$ and lattice homomorphisms $\widehat{T}_i:X_i\rightarrow L^{p,\infty}(g\nu_{x^\ast})$ that make the following diagram commutative
    \begin{center}
		\begin{tikzcd}X_i\arrow[r, "\widetilde{T}_i"]\arrow[rd, swap, "\widehat{T}_i"]& PO\arrow[r, "\psi_{x^\ast}"] & L^1(\nu_{x^\ast})\\& L^{p,\infty}(g\nu_{x^\ast})\arrow[ru, swap, "M_g"]&\end{tikzcd}
	\end{center}
    with $\|\widehat{T}_i\|\leq 2^{1-\frac1p}\gamma_p=K_{\cC_{p,\infty}}$ and $M_g$ the operator of multiplication by $g$. It follows that
    \begin{center}
		\begin{tikzcd}X_1\arrow[r, "\widehat{T}_1"]& L^{p,\infty}(g\nu_{x^\ast}) \\X_0\arrow[r, "T_2"]\arrow[u,"T_1"]& X_2\arrow[u, "\widehat{T}_2"]\end{tikzcd}
	\end{center}
    is a commutative diagram in $\cC_{p,\infty}$ and
    \[\|x_1\|=x^\ast|\widetilde T_1 x_1|=\|\psi_{x^\ast}\widetilde T_1 x_1\|_{L^1(\nu_{x^\ast})}\leq \|\widehat{T}_1 x_1\|_{L^{p,\infty}(g\nu_{x^\ast})}.\]
\end{enumerate}


\end{proof}

As a consequence of the above, we get our desired solution to the subspace problem.

\begin{thm}\label{SS problem}
    Suppose that $\iota:F\rightarrow E$ is an isometric embedding and let $\mathcal C$ denote either the class of $p$-convex Banach lattices or the class of Banach lattices with upper $p$-estimates. The following are equivalent.
    \begin{enumerate}
        \item $\overline \iota:\fbl^{\mathcal C}[F]\rightarrow \fbl^{\mathcal C}[E]$ is a $c_1$-lattice embedding.
        \item For every operator $T: F\rightarrow X$ with $X$ in $\mathcal{C}$, there is $Y$ in $\mathcal{C}$, a (norm one) $K_{\mathcal C}$-lattice embedding $j:X\rightarrow Y$ and $S:E\rightarrow Y$ so that $jT=S\iota$ and $\|S\|\leq c_2 \|T\|$.
    \end{enumerate}
    Here, $ c_2\leq  c_1\leq K_{\mathcal C}c_2$.
\end{thm}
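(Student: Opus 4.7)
For $(2)\Rightarrow(1)$, which gives $c_1\le K_{\mathcal{C}}c_2$, I would take any contraction $T:F\to X$ with $X\in\mathcal{C}$ and use hypothesis (2) to produce $Y\in\mathcal{C}$, a norm-one $K_{\mathcal{C}}$-embedding $j:X\to Y$, and $S:E\to Y$ with $jT=S\iota$ and $\|S\|\le c_2$. Both $j\widehat T$ and $\widehat S\,\overline\iota$ are lattice homomorphisms from $\fbl^{\mathcal{C}}[F]$ to $Y$; applying either one to $\vp_F(y)$ and using $\overline\iota\vp_F=\vp_E\iota$ (from \Cref{p5.2}) yields $jTy=S\iota y$, so by the uniqueness clause of \Cref{p2.1} the two homomorphisms coincide on $\fbl^{\mathcal{C}}[F]$. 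Hence for any $f\in\fbl^{\mathcal{C}}[F]$,
\[
\|\widehat Tf\|_X\le K_{\mathcal{C}}\|j\widehat Tf\|_Y=K_{\mathcal{C}}\|\widehat S\,\overline\iota f\|_Y\le K_{\mathcal{C}}c_2\,\rho_{\mathcal{C}}(\overline\iota f),
\]
and taking the supremum over contractions $T$ gives $\rho_{\mathcal{C}}(f)\le K_{\mathcal{C}}c_2\,\rho_{\mathcal{C}}(\overline\iota f)$, as required.

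For $(1)\Rightarrow(2)$, which gives $c_2\le c_1$, fix $T:F\to X$ with $\|T\|=1$. The natural move is to form the push-out of $\widehat T$ and $\overline\iota$ via \Cref{push-out} and to extract the desired $j$ and $S$ from \Cref{parallelisometries}; the obstacle is that the latter needs the ``lower arrow'' to be an isometric embedding, while $\overline\iota$ is only a $c_1$-embedding. To bridge this I would renorm $\fbl^{\mathcal{C}}[F]$ by the pulled-back norm $\|f\|_Z:=\rho_{\mathcal{C}}(\overline\iota f)$. Since both $\mathcal{C}_p$ and $\mathcal{C}_{p,\infty}$ are closed under closed sublattices and $\ell_\infty$-sums, $\overline{\mathcal{C}}=\mathcal{C}$ for both choices, and the resulting Banach lattice $Z$ is lattice isometric to the sublattice $\overline\iota(\fbl^{\mathcal{C}}[F])$ of $\fbl^{\mathcal{C}}[E]$, hence belongs to $\mathcal{C}$. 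By construction $\overline\iota:Z\to\fbl^{\mathcal{C}}[E]$ is now an isometric lattice embedding, while the estimate $\|f\|_Z\ge c_1^{-1}\rho_{\mathcal{C}}(f)$ forces $\|\widehat T\|_{Z\to X}\le c_1$.

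Applying \Cref{push-out} to $c_1^{-1}\widehat T:Z\to X$ and $\overline\iota:Z\to\fbl^{\mathcal{C}}[E]$ inside $\overline{\mathcal{C}}=\mathcal{C}$ yields $Y\in\mathcal{C}$ and contractive lattice homomorphisms $\widetilde T_1:X\to Y$, $\widetilde T_2:\fbl^{\mathcal{C}}[E]\to Y$ with $\widetilde T_1\,c_1^{-1}\widehat T=\widetilde T_2\,\overline\iota$. Since $\overline\iota$ is now isometric, \Cref{parallelisometries} upgrades $j:=\widetilde T_1$ to a norm-one $K_{\mathcal{C}}$-embedding. Setting $S:=c_1\widetilde T_2\vp_E:E\to Y$ and evaluating the identity $\widetilde T_1\widehat T=c_1\widetilde T_2\overline\iota$ on a generator $\delta_y$ (with $\overline\iota\delta_y=\delta_{\iota y}$ by \Cref{p5.2}) gives $jT=S\iota$, while $\|S\|\le c_1\|\widetilde T_2\|\le c_1=c_1\|T\|$, so $c_2\le c_1$. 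The main difficulty is precisely the mismatch between the $c_1$-embedding assumption and the isometric-embedding hypothesis of \Cref{parallelisometries}: the renorming step above is what converts the situation into a genuine isometric push-out in $\mathcal{C}$ and unlocks the quantitative bound on the opposite arrow.
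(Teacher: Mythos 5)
Your proof is correct and follows essentially the same route as the paper: the $(2)\Rightarrow(1)$ direction is the paper's argument verbatim (uniqueness giving $j\widehat T=\widehat S\,\overline\iota$, then the norm estimate), and your renormed lattice $Z$ in $(1)\Rightarrow(2)$ is exactly the paper's choice $X_0=\overline\iota(\fbl^{\mathcal C}[F])$ transported through $\overline\iota$, with your normalization $c_1^{-1}\widehat T$ playing the role of the paper's $\widehat T\overline\iota^{-1}/\|\widehat T\overline\iota^{-1}\|$ and yielding the same bound $\|S\|\leq c_1\|T\|$. The only cosmetic difference is that you make the isometry of the lower arrow explicit via a pull-back norm rather than via the induced norm on the image sublattice, which is the same construction.
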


\begin{proof}
    (1)$\implies$(2): Let $X\in \mathcal C$ and $T:F\rightarrow X$ an operator. By Theorem \ref{push-out}, let us consider the isometric push-out diagram in $\overline{\mathcal{C}}$ for $X_0=\overline{\iota}(\fbl^{\mathcal{C}}(F))$, $X_1=X$, $X_2=\fbl^{\mathcal{C}}(E)$,  $T_1=\frac{\widehat{T}\overline{\iota}^{-1}}{\|\widehat{T}\overline{\iota}^{-1}\|}:X_0\rightarrow X_1$ and $T_2$ the formal identity from $X_0$ to $X_2$, where $\widehat T:\fbl^{\mathcal C}[F]\rightarrow X$ denotes the lattice homomorphism extending $T$.
    \begin{center}
		\begin{tikzcd}
            & X\arrow[r, "S_1"]& PO^{\overline{\mathcal C}} \\
            \fbl^{\mathcal C}[F]\arrow[r, "\overline \iota"]\arrow[ru,"\frac{\widehat{T}}{\|\widehat{T}\overline{\iota}^{-1}\|}"]& \overline{\iota}(\fbl^{\mathcal{C}}(F)) \arrow[u,"T_1"]\arrow[r,"T_2"] & \fbl^{\mathcal C}[E]\arrow[u, "S_2"]\\
            F \arrow[u,"\phi_F"] \arrow[rr,"\iota"] &  & E \arrow[u,"\phi_E"]
        \end{tikzcd}
	\end{center}
    Let us write $Y=PO^{\overline{\mathcal C}}$ and $j=S_1$. Since $T_2$ is an isometric embedding, we can apply Theorem \ref{parallelisometries} to obtain that $j$ is a norm one $K_{\mathcal C}$-embedding. On the other hand, let $S=\|\widehat{T}\overline{\iota}^{-1}\| S_2\phi_E:E\rightarrow Y$. It follows that
    \[jT= j\wh T \phi_F=\|\widehat{T}\overline{\iota}^{-1}\|j T_1 \ol \iota \phi_F=\|\widehat{T}\overline{\iota}^{-1}\|S_2 T_2 \ol \iota \phi_F=\|\widehat{T}\overline{\iota}^{-1}\|S_2  \phi_E \iota =S\iota,\]
    and $\|S\| \leq \|\widehat{T}\|\|\overline{\iota}^{-1}\|\leq  c_1 \|T\|$, so $c_2\leq c_1 $.\medskip

    \medskip

    \noindent(2)$\implies$(1): First, note that $\|\overline{\iota}\|= 1$. Now, take $f\in \FVL[F]$ and a contraction $T:F\rightarrow X$, $X$ in $\mathcal{C}$. By the hypothesis, there exists $Y$ in $\mathcal{C}$, a norm one $K_{\mathcal C}$-lattice embedding $j:X\rightarrow Y$ and $S:E\rightarrow Y$ so that $jT=S\iota$ and $\|S\|\leq c_2$. It follows that $j\widehat{T}=\widehat{S}\overline{\iota}$, so
    \begin{equation*}
        \|\widehat{T}f\|_X=\|j^{-1}\widehat{S}\overline{\iota}f\|_X\leq \|j^{-1}\|\|S\|\rho_{\mathcal{C}}(\overline{\iota}f)\leq K_{\mathcal C} c_2 \rho_{\mathcal{C}}(\overline{\iota}f)
    \end{equation*}
    and $\overline{\iota}$ is a $K_{\mathcal C} c_2$-embedding.
\end{proof}

\subsection{Recovering the POE-\texorpdfstring{$p$}{} via injectivity}\label{s4.2}
Recall  that a Banach lattice $X$ is \emph{injective} if for every Banach lattice $Z$, every closed linear sublattice $Y$ of $Z$ and every positive linear operator $T:Y\to X$ there is a positive linear extension $\widetilde{T}: Z\to X$ with $\|\widetilde{T}\|=\|T\|.$ Equivalently, $X$ is injective if whenever $X$ lattice isometrically embeds into a Banach lattice $Y$ there is a positive contractive projection from $Y$ onto $X$. Note that this notion of injectivity corresponds to the category of Banach lattices with positive linear maps, which is considerably different from the category of Banach lattices with lattice homomorphisms (where in fact there are no injective objects \cite{Aviles-Tradacete}). The study of injective Banach lattices is classical \cite{MR0383031,MR0473776,MR0383141}; however, it seems that not much is known on injectivity (or projectivity) in subcategories of Banach lattices. Here, we study the injectivity of $\ell^p$ in the class of $p$-convex Banach lattices, with an aim towards showing that \Cref{SS problem} together with \Cref{p5.3} recover \cite[Theorem 3.7]{OTTT}, up to constants. Although the injectivity of $L^p$ spaces in the $p$-convex category is implicit in \cite{Pisier_REG94}, we provide  an independent argument in order to give an alternative proof of \cite[Theorem 3.7]{OTTT}. See also \cite[Corollary 4.13]{2014DH} for a related result.

\begin{thm}\label{t5.9}
Let $X$ be a $p$-convex Banach lattice and suppose that $(x_n)$ is a disjoint positive sequence in $X$ equivalent to the $\ell^p$-basis.  Then $[(x_n)]$ is the image of a positive projection on $X$.
\end{thm}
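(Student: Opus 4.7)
The plan is to realize the projection in the form $P(x) = \sum_n \Phi_n(x)\, x_n$ for positive biorthogonal functionals $\Phi_n \in X^*_+$ with $\Phi_n(x_m) = \delta_{nm}$. Equivalently, one must extend the natural positive lattice isomorphism $T:[(x_n)]\to\ell^p$, $\sum a_n x_n \mapsto (a_n)$, to a positive bounded operator $\tilde T:X\to\ell^p$; setting $P:=T^{-1}\tilde T$ then gives the required projection, since $T^{-1}$ is positive (because $x_n\geq 0$) and $P(x_m)=x_m$ by biorthogonality. After renorming so that the $p$-convexity constant of $X$ is $1$ and normalizing $\|x_n\|=1$, the proof splits into constructing the $\Phi_n$ and proving boundedness of $\tilde T$.

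For the construction, Hahn-Banach combined with the modulus yields a positive contractive $\phi_n\in X^*_+$ with $\phi_n(x_n)=1$. The disjointness of $(x_n)$ places $x_m$ in the disjoint complement $I(x_n)^d$ of the principal ideal $I(x_n)$ for every $m\neq n$, so defining $\Phi_n$ to be $\phi_n$ on $I(x_n)$ and zero on $I(x_n)^d$ gives a positive linear functional on the subspace $I(x_n)+I(x_n)^d$: a disjoint pair $y\in I(x_n)$, $z\in I(x_n)^d$ satisfies $|y+z|=|y|+|z|$, so $y+z\geq 0$ forces $y\geq 0$ and $z\geq 0$, and $\|y+z\|\geq \|y\|$ makes $\Phi_n$ contractive. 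The positive Hahn-Banach theorem (majorizing by the sublinear $x\mapsto \|x^+\|$) extends $\Phi_n$ to a positive contractive functional on all of $X$, biorthogonal to $(x_n)$, and the $(\Phi_n)$ are automatically pairwise disjoint in the Dedekind complete dual $X^*$.

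The main obstacle is showing that $\tilde T(x):=(\Phi_n(x))_n$ maps $X$ boundedly into $\ell^p$; this is where the $p$-convexity of $X$ is essential, as disjointness of $(\Phi_n)$ together with $p'$-concavity of $X^*$ alone only yields a lower estimate on $\|\sum\alpha_n\Phi_n\|_{X^*}$ for $\alpha\in B_{\ell^{p'}}$. My plan is to route through \Cref{p3.3}: the renorming makes $J:X\hookrightarrow W:=(\oplus_\gamma L^p(\mu_\gamma))_\infty$ an isometric lattice embedding, and each $Jx_n=(f_{n,\gamma})_\gamma$ is disjoint positive in $W$. In each $L^p(\mu_\gamma)$, integration against $f_{n,\gamma}^{p-1}/\|f_{n,\gamma}\|_p^p$ is a positive contractive projection onto the closed span of $(f_{n,\gamma})_n$; using the $\ell^p$-equivalence
\[
A\Bigl(\sum|a_n|^p\Bigr)^{1/p}\leq \|\sum a_n Jx_n\|_W=\sup_\gamma\Bigl(\sum|a_n|^p\|f_{n,\gamma}\|_p^p\Bigr)^{1/p},
\]
one can select, for each $n$, a coordinate $\gamma(n)$ (or suitable weighted combination across $\gamma$) so that the pulled-back functionals $\Phi_n$ assemble into an operator $\tilde T: X\to\ell^p$ with $\|\tilde T x\|_{\ell^p}\leq C\|x\|_X$, the constant $C$ depending only on the $\ell^p$-equivalence constants of $(x_n)$.

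Once $\tilde T$ is bounded, $P=T^{-1}\tilde T$ is positive (composition of positive operators), bounded by the upper $\ell^p$-equivalence constant times $\|\tilde T\|$, and satisfies $P(x_m)=x_m$ by biorthogonality, completing the proof.
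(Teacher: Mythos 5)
Your overall architecture (positive biorthogonal functionals $\Phi_n$ assembled into a bounded positive map $\tilde T:X\to\ell^p$, routed through the embedding $J:X\to(\bigoplus_\gamma L^p(\mu_\gamma))_\infty$ of \Cref{p3.3}) is the same as the paper's, and your preliminary steps are fine: the Hahn--Banach construction of uniformly bounded positive biorthogonal functionals works, as do the coordinatewise conditional-expectation projections in each $L^p(\mu_\gamma)$. The genuine gap is the sentence ``one can select, for each $n$, a coordinate $\gamma(n)$ (or suitable weighted combination across $\gamma$) so that the pulled-back functionals assemble into a bounded $\tilde T$.'' Single-coordinate selection does not work: if the $\gamma(n)$ are distinct, the only available estimate is $|\Phi_n(x)|\leq c^{-1}\|(Jx)_{\gamma(n)}\chi_{\supp f_{n,\gamma(n)}}\|_p$, and since these restrictions live in \emph{different} coordinates of the $\ell^\infty$-sum there is no summability: each term is bounded only by $c^{-1}\|x\|$, so $\sum_n|\Phi_n(x)|^p$ is uncontrolled (disjointness of supports helps only within a fixed coordinate). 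Conversely, a single common coordinate $\gamma_0$ with $\|f_{n,\gamma_0}\|_p\geq c$ for \emph{all} $n$ need not exist, since the lower $\ell^p$-estimate only forces the norms to be witnessed by an aggregate of coordinates. What is actually needed is one fixed convex weight $(\alpha_\gamma)$, $\sum_\gamma\alpha_\gamma=1$, the \emph{same for every} $n$, satisfying $\sum_\gamma\alpha_\gamma\|f_{n,\gamma}\|_p^p\geq c^p$ simultaneously for all $n$ in play; only then does the triangle inequality in $\ell^p$ over the convex combination, followed by H\"older within each coordinate, yield $\|(\Phi_n(x))_n\|_p\leq\|x\|$. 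Producing these weights is the heart of the theorem, not a routine detail: the paper extracts them by a separation theorem for solid convex hulls (\cite[Proposition 3.1]{OikhbergTursiHulls}) applied to the positive map $S:\ell^1(m)\to\ell^\infty(\Gamma_m)$, $Sb=(\sum_n b_n\|y_n(\gamma)\|_p^p)_\gamma$, whose lower bound $\|Sb\|\geq c^p\|b\|_1$ on the positive cone dualizes to a convex combination with $\sum_\gamma\alpha_\gamma S^*e^*_\gamma\geq c^p(1,\dots,1)$.

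There is a second, related omission: the weights exist only for finitely many $n$ at a time (after the paper's finite-dimensional reduction to a finite $\Gamma_m$ and $\ell^p(k)$), so an operator $\tilde T$ defined on all of $X$ in one stroke, as you propose, is not available. The paper instead builds uniformly bounded positive finite-rank projections $P_m$ onto $[(x_n)^m_{n=1}]$ with $\|P_m\|\leq c^{-p}$ and passes to the limit by $Px=w^*\text{-}\lim_{m\to\cU}P_mx$ along a free ultrafilter, using crucially that $[(x_n)]$, being isomorphic to $\ell^p$, is a dual Banach lattice. Your sketch contains neither the separation/duality step nor this finite-to-infinite passage, and both are essential; without them the proof does not close.
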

\begin{proof}
We may assume that $X$ is a closed sublattice of $Z=(\bigoplus_{\mu\in \Gamma}L^p(\mu))_\infty$
by  Proposition \ref{p3.3}.
It suffices to show that $[(x_n)]$ is the range of a positive projection on $Z$.  Assume that $c\|(a_n)\|_p < \|\sum a_nx_n\|\leq \|(a_n)\|_p$ for any non-zero $(a_n)\in \ell^p$.
Let $m\in \N$.  There exists a finite set $\Gamma_m$, $k\in \N$, and a positive contraction $T:(\bigoplus_{\mu\in \Gamma}L^p(\mu))_\infty\to  (\ell^p(k))^{\Gamma_m}_\infty$ so that $(Tx_n)^m_{n=1}$ is a positive disjoint sequence satisfying
$c\|(a_n)^m_{n=1}\|_p < \|\sum^m_{n=1} a_nTx_n\|\leq \|(a_n)^m_{n=1}\|_p$ for any non-zero $(a_n)^m_{n=1}\in \ell^p(m)$. 
Write $Tx_n = (y_n(\gamma))_{\gamma\in \Gamma_m}$ with $y_n(\gamma)\in \ell^p(k)$.
Define $S: \ell^1(m)\to \ell^\infty(\Gamma_m)$ by
\[ S(b_1,\dots, b_m) = (\sum^m_{n=1}b_n\|y_n(\gamma)\|^p)_{\gamma\in \Gamma_m}.
\]
Since $(y_n(\gamma))^m_{n=1}$ is a disjoint sequence for each $\gamma$, if  $b=(b_1,\dots, b_m)\geq 0$, then
\[ \|Sb\| = \max_{\gamma\in \Gamma_m}\|\sum^m_{n=1}b_n^{\frac{1}{p}}y_n(\gamma)\|_p^p = \|\sum^m_{n=1}b_n^{\frac{1}{p}}Tx_n\|^p \geq c^p\|(b_n^{\frac{1}{p}})^m_{n=1}\|_p^p = c^p\|b\|_1.
\]
We claim  that $B^+_{\ell^\infty(m)} \subseteq c^{-p}\co\so\{S^*e^*_\gamma:\gamma\in\Gamma_m\}$. Here, for a subset $A$ of a Banach lattice $X$, $\so A = \bigcup_{a\in A}[-|a|,|a|]$ denotes the solid hull of $A$ and, as usual, $\co(A)$ denotes the convex hull.
To prove the claim, note that if $0\leq y^* \notin c^{-p}\co\so\{S^*e^*_\gamma:\gamma\in\Gamma_m\}$ then by \cite[Proposition 3.1]{OikhbergTursiHulls}
there exists $y\in \ell^1(m)_+$ so that
\[ \|y^*\|\,\|y\| \geq  y^*(y) > c^{-p}\max_{\gamma\in \Gamma_m}(S^*e^*_\gamma)(y) = c^{-p}\|Sy\|\geq \|y\|.
\]
Hence, $\|y^*\| > 1$.  This completes the proof of the claim.
\medskip

By the claim we can find a convex combination $\sum_{\gamma\in \Gamma_m}\al_\gamma S^*e^*_\gamma\geq c^p\overbrace{(1,\dots, 1)}^m$.
Choose $y^*_n(\gamma) \in \ell^{p'}(k)_+$ so that $\supp y^*_n(\gamma) = \supp y_n(\gamma)=: I_n$, $\| y^*_n(\gamma)\|_{p'} = 1$ and
$\la y_n(\gamma), y^*_n(\gamma)\ra = \|y_n(\gamma)\|_p$.
If $x \in (\bigoplus_{\mu\in \Gamma}L^p(\mu))_\infty$ and $Tx = (y(\gamma))_{\gamma\in \Gamma_m}$, let 
\[z^*_n(x) = \sum_{\gamma\in\Gamma_m}\al_\gamma\|y_n(\gamma)\|_p^{p-1}\la y(\gamma), y^*_n(\gamma)\ra.
\]
It is easy to see that $z^*_n$ is a positive linear functional and for any $x\in (\bigoplus_{\mu\in \Gamma}L^p(\mu))_\infty$,
\begin{align*}
\|(z^*_n(x))^m_{n=1}\|_p & \leq \sum_{\gamma\in \Gamma_m}\al_\gamma\bigl\|(\|y_n(\gamma)\|_p^{p-1}\|y(\gamma)\chi_{I_n}\|_p)^m_{n=1}\bigr\|_p\\
&\leq \sup_{\gamma\in\Gamma_m}\sup_n\|y_n(\gamma)\|^{p-1}_p\bigl\|\sum_{n=1}^m y(\gamma)\chi_{I_n}\bigr\|_p \\
&\leq \sup_{\gamma\in\Gamma_m}\sup_n\|y_n(\gamma)\|^{p-1}_p\|y(\gamma)\|_p\\
&\leq \sup_n\|Tx_n\|^{p-1}\cdot \|Tx\|\leq \|Tx\|\leq \|x\|.
\end{align*}
On the other hand, $z^*_l(x_n) = 0$ if $l\neq n$ and
\[z^*_n(x_n) =  \sum_{\gamma\in\Gamma_m}\al_\gamma\|y_n(\gamma)\|_p^{p-1}\la y_n(\gamma), y^*_n(\gamma)\ra = \sum_{\gamma\in\Gamma_m}\al_\gamma\|y_n(\gamma)\|_p^p\geq c^p,
\]
since $\sum_{\gamma\in\Gamma_m}\al_\gamma\|y_n(\gamma)\|_p^p$ is the $n$-th coordinate of the sum $\sum_{\gamma\in \Gamma_m}\al_\gamma S^*e^*_\gamma$.
Therefore, the map $P_m$ on $(\bigoplus_{\mu\in \Gamma}L^p(\mu))_\infty$ given by
\[ P_mx = \sum^m_{n=1}\frac{z_n^*(x)}{z^*_n(x_n)}x_n\]
is a positive projection from  $(\bigoplus_{\mu\in \Gamma}L^p(\mu))_\infty$ onto $[(x_n)^m_{n=1}]$ such that $\|P_m\| \leq c^{-p}$.
Finally, let $\cU$ be a free ultrafilter on $\N$.  Regard $[(x_n)^\infty_{n=1}]$ as a dual Banach lattice  (either the dual of an isomorph of $\ell^{p'}$ if $1< p \leq \infty$ or the dual of an isomorph of $c_0$ if $p=1$).
The operator $P:(\bigoplus_{\mu\in \Gamma}L^p(\mu))_\infty\to [(x_n)]$ given by
$Px = w^*$-$\lim_{m\to \cU}P_mx$ is a positive projection onto $[(x_n)]$.
\end{proof}
As a corollary of Theorems~\ref{SS problem} and \ref{t5.9}, we recover the solution to the subspace problem for $\fbp$ given in \cite[Theorem 3.7]{OTTT}.
\begin{cor}
    Let $\iota:F\hookrightarrow E$ be an isometric embedding and $1\leq p<\infty$. The following are equivalent.
    \begin{enumerate}
        \item $\overline \iota:\fbl^{(p)}[F]\rightarrow \fbl^{(p)}[E]$ is a $c_1$-lattice embedding.
        \item For every operator $T: F\rightarrow \ell^p$ there is $R:E\rightarrow \ell^p$ so that $T=R\iota$ and $\|R\|\leq c_2 \|T\|$.
    \end{enumerate}
    Here, $c_1\leq c_2\leq 2^{p-\frac1p}c_1$.
\end{cor}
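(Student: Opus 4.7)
The equivalence will be obtained by handling the two implications separately. The direction $(2)\Rightarrow(1)$ is immediate from the identification $\rho_{\cC_p}=\rho_{\{\ell^p\}}$ on $\FVL[F]$ noted after Theorem~\ref{t3.5}, while $(1)\Rightarrow(2)$ combines Theorem~\ref{SS problem} with the injectivity statement Theorem~\ref{t5.9}.

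For $(2)\Rightarrow(1)$, I would bypass Theorem~\ref{SS problem} and argue directly at the level of the norm. Given $f\in \FVL[F]$, the identification $\rho_{\cC_p}(f)=\rho_{\{\ell^p\}}(f)$ reduces the problem to bounding $\|\widehat T f\|_{\ell^p}$ for each contraction $T:F\to \ell^p$. Hypothesis (2) produces $R:E\to \ell^p$ with $R\iota=T$ and $\|R\|\leq c_2$; since $\widehat R\,\overline{\iota}$ and $\widehat T$ are lattice homomorphisms from $\fbp[F]$ to $\ell^p$ that agree on every $\delta_y$ with $y\in F$, the uniqueness clause in the universal property of $\fbp[F]$ forces $\widehat R\,\overline{\iota}=\widehat T$. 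Consequently
\[
 \|\widehat T f\|_{\ell^p}=\|\widehat R\,\overline{\iota} f\|_{\ell^p}\leq c_2\,\rho_{\cC_p}(\overline{\iota} f),
\]
and taking the supremum over $T$ yields $\rho_{\cC_p}(f)\leq c_2\,\rho_{\cC_p}(\overline{\iota} f)$, i.e., $c_1\leq c_2$.

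For $(1)\Rightarrow(2)$, given a contraction $T:F\to \ell^p$, I would apply Theorem~\ref{SS problem} with $\mathcal C=\cC_p$ and $X=\ell^p$ to obtain $Y\in \cC_p$, a norm-one $K_{\cC_p}$-lattice embedding $j:\ell^p\to Y$, and $S:E\to Y$ with $jT=S\iota$ and $\|S\|\leq c_1$. Since $j$ is a norm-one lattice homomorphism, $(je_n)$ is a disjoint positive sequence in $Y$ satisfying $K_{\cC_p}^{-1}\|(a_n)\|_p\leq \|\sum a_n je_n\|\leq \|(a_n)\|_p$, and Theorem~\ref{t5.9} produces a positive projection $P:Y\to j(\ell^p)$ with $\|P\|\leq K_{\cC_p}^{p}=2^{p-1}$. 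The candidate extension is $R=j^{-1}PS:E\to \ell^p$. Because $jT(F)\subseteq j(\ell^p)=\operatorname{range}(P)$, one verifies $R\iota=j^{-1}PjT=T$, and
\[
 \|R\|\leq \|j^{-1}\|\,\|P\|\,\|S\|\leq K_{\cC_p}\cdot 2^{p-1}\cdot c_1 = 2^{p-\frac{1}{p}}\,c_1,
\]
yielding $c_2\leq 2^{p-1/p}c_1$.

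The only point requiring care is the bookkeeping of constants in $(1)\Rightarrow(2)$: the projection norm $K_{\cC_p}^p$ furnished by Theorem~\ref{t5.9} combines with $\|j^{-1}\|\leq K_{\cC_p}$ to produce the factor $K_{\cC_p}^{p+1}=2^{p-1/p}$, matching the claimed bound exactly. Beyond this arithmetic, there is no genuine obstacle, as the two key ingredients---the subspace problem (Theorem~\ref{SS problem}) and the injectivity of $\ell^p$ in $\cC_p$ (Theorem~\ref{t5.9})---are already at our disposal.
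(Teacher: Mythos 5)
Your proposal is correct and takes essentially the same route as the paper: for (1)$\Rightarrow$(2) the paper likewise applies Theorem~\ref{SS problem} with $\cC=\cC_p$ and $X=\ell^p$, uses the quantitative projection bound $\|P\|\leq (2^{1-\frac1p})^p$ extracted from (the proof of) Theorem~\ref{t5.9}, and sets $R=j^{-1}PS$, yielding the same constant $\|j^{-1}\|\,\|P\|\,\|S\|\leq 2^{1-\frac1p}\cdot 2^{p-1}\cdot c_1=2^{p-\frac1p}c_1$. Your direct argument for (2)$\Rightarrow$(1), via the identification $\rho_{\cC_p}=\rho_{\{\ell^p\}}$ from Theorem~\ref{t3.4} and the identity $\widehat{R}\,\overline{\iota}=\widehat{T}$ on generators, is precisely what the paper's unexpanded ``the implication (2)$\Rightarrow$(1) is clear'' amounts to, so there is no substantive difference in approach.
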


\begin{proof}
    We use the identification $\fbl^{(p)}=\fbl^{\mathcal C}$ where $\mathcal C$ denotes the class of $p$-convex Banach lattices and invoke \Cref{SS problem}. The implication $(2)\Rightarrow(1)$ is clear. For the converse, we get that there exist a $p$-convex Banach lattice $Y$, a $2^{1-\frac1p}$-lattice embedding $j:\ell^p\rightarrow Y$ and $S:E\rightarrow Y$ such that $jT=S\iota$ with $\|S\|\leq c_1\|T\|$. By \Cref{t5.9}, there is a (positive) projection $P:Y\rightarrow j(\ell^p)\subseteq Y$ with $\|P\|\leq (2^{1-\frac1p})^p$ (as $j$ is a $2^{1-\frac1p}$-embedding). Thus, we can take $R=j^{-1}PS$.
\end{proof}

\subsection{\texorpdfstring{$\ell^{p,\infty}$}{} is not injective for Banach lattices with upper \texorpdfstring{$p$}{}-estimates}\label{s4.3}
We now  show that for any $1 < p < \infty$, $\ell^{p,\infty}$ is not injective in the class of Banach lattices with upper $p$-estimates.

\begin{thm}\label{not inj}
    For any $1 < p < \infty$  there is a Banach lattice $X$ with an upper $p$-estimate
and an uncomplemented  closed sublattice $Y$ of $X$ that is lattice isomorphic to $\ell^{p,\infty}$.
\end{thm}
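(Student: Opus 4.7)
The strategy is to combine the push-out construction of \Cref{push-out} with the embedding guarantee of \Cref{parallelisometries}, feeding them the failure of Banach-space injectivity of $\ell^{p,\infty}$. The external input needed is the classical structural fact that $\ell^{p,\infty}$ is \emph{not} injective as a Banach space for $1<p<\infty$; granting this, there exist Banach spaces $F\subseteq E$ with isometric inclusion $\iota:F\hookrightarrow E$ together with a bounded operator $T:F\to\ell^{p,\infty}$, which after rescaling satisfies $\|T\|\leq 1$, that admits \emph{no} bounded linear extension to $E$.

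Apply \Cref{push-out} in the class $\overline{\cC_{p,\infty}}$ to this pair $(T,\iota)$, obtaining the push-out Banach lattice $X:=PO^{\overline{\cC_{p,\infty}}}$ and lattice homomorphisms $\widetilde{T}_1:\ell^{p,\infty}\to X$ and $\widetilde{T}_2:E\to X$ satisfying $\widetilde{T}_2\circ\iota=\widetilde{T}_1\circ T$, as displayed in
\begin{center}
\begin{tikzcd}
\ell^{p,\infty}\arrow[r,"\widetilde{T}_1"] & X \\
F\arrow[u,"T"]\arrow[r,"\iota"] & E\arrow[u,"\widetilde{T}_2"]
\end{tikzcd}
\end{center}
Since $X\in\overline{\cC_{p,\infty}}$, it embeds as a closed sublattice of a product of $L^{p,\infty}$ spaces, and hence $X$ satisfies an upper $p$-estimate. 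Moreover, \Cref{parallelisometries} applied to this push-out (lower arrow $\iota$ is an isometric embedding and $\|T\|\leq 1$) shows that $\widetilde{T}_1$ is a $K_{\cC}$-lattice-embedding. Set $Y:=\widetilde{T}_1(\ell^{p,\infty})$; this is a closed sublattice of $X$ lattice isomorphic to $\ell^{p,\infty}$.

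To see that $Y$ is uncomplemented in $X$, suppose for contradiction that $P:X\to Y$ is a bounded linear projection, and define $\widetilde{T}:=\widetilde{T}_1^{-1}\circ P\circ\widetilde{T}_2:E\to\ell^{p,\infty}$. Using the push-out identity $\widetilde{T}_2\circ\iota=\widetilde{T}_1\circ T$ together with $P|_Y=\mathrm{id}_Y$ we obtain
\[
\widetilde{T}\circ\iota = \widetilde{T}_1^{-1}\circ P\circ\widetilde{T}_2\circ\iota = \widetilde{T}_1^{-1}\circ P\circ\widetilde{T}_1\circ T = \widetilde{T}_1^{-1}\circ\widetilde{T}_1\circ T = T,
\]
so $\widetilde{T}$ would provide a bounded linear extension of $T$ to $E$, contradicting the choice of $(F,E,T)$.

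The main obstacle is the input non-extension result, namely the failure of Banach-space injectivity of $\ell^{p,\infty}$. This is a classical structural fact (injective Banach spaces are isomorphic to $C(K)$ with $K$ extremally disconnected, and $\ell^{p,\infty}$ cannot be of this form for $1<p<\infty$), but it lies outside the machinery developed earlier in the paper and must be cited or proved separately. Once this input is secured, the rest of the argument is essentially mechanical within the push-out framework.
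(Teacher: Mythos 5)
There is a genuine gap, and it sits exactly where you call the remaining work ``mechanical.'' \Cref{push-out} and \Cref{parallelisometries} are statements about Banach \emph{lattices} in $\overline{\cC}$ and \emph{lattice homomorphisms} between them; in your square the objects $F$ and $E$ are Banach spaces and the maps $T\colon F\to\ell^{p,\infty}$, $\iota\colon F\to E$ are merely bounded operators, so neither theorem applies as stated --- there is no ``push-out in $\overline{\cC_{p,\infty}}$'' of the pair $(T,\iota)$ to invoke. The natural repair is to replace $F$ and $E$ by $\fbl^{\cC_{p,\infty}}[F]$ and $\fbl^{\cC_{p,\infty}}[E]$ and the maps by $\widehat{T}$ and $\overline{\iota}$ (this is exactly what the proof of \Cref{SS problem} does), but this surfaces the real obstruction: \Cref{parallelisometries} requires the lower arrow of the square to be an isometric lattice embedding, and after the repair this becomes the hypothesis that $\overline{\iota}\colon\fbl^{\cC_{p,\infty}}[F]\to\fbl^{\cC_{p,\infty}}[E]$ is a lattice embedding (one must take $X_0=\overline{\iota}(\fbl^{\cC_{p,\infty}}[F])$ with left vertical arrow built from $\widehat{T}\,\overline{\iota}^{-1}$, which already needs $\overline{\iota}$ bounded below). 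That is precisely condition (1) of \Cref{SS problem}, and your input does not supply it: a pair $F\subseteq E$ witnessing that $\ell^{p,\infty}$ is not an injective Banach space gives no control whatsoever on $\overline{\iota}$, and if $\overline{\iota}$ fails to be an embedding the push-out may crush the copy of $\ell^{p,\infty}$, so $\widetilde{T}_1$ need not be injective and your $Y$ need not be lattice isomorphic to $\ell^{p,\infty}$ at all. You would have to exhibit a single pair satisfying simultaneously (a) $\overline{\iota}$ is a lattice embedding and (b) some $T\colon F\to\ell^{p,\infty}$ has no bounded extension to $E$ --- and producing such a pair is essentially the content of the theorem, not a classical fact to be cited.

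There is a decisive sanity check showing the argument as written cannot be sound: every ingredient you use is equally available for $\ell^p$ and the class $\cC_p$ --- $\ell^p$ is likewise not an injective Banach space (it is separable and infinite dimensional), \Cref{push-out} applies to $\cC_p$, and \Cref{parallelisometries} covers $\cC_p$ with constant $K_{\cC_p}=2^{1-\frac1p}$. Running your proposal verbatim in that setting would produce an uncomplemented sublattice copy of $\ell^p$ inside a $p$-convex Banach lattice, contradicting \Cref{t5.9}, which shows every such copy is positively complemented. So the missing embedding hypothesis is not a technicality but the heart of the matter. To be clear about what is right: your final step (a bounded projection onto the push-out copy yields a bounded extension of $T$) is correct, and it is the same mechanism the paper uses after \Cref{t5.9} in the $p$-convex case; also, the external input itself is true, though your parenthetical conflates isometric injectivity (the $C(K)$ with $K$ extremally disconnected characterization) with the isomorphic notion actually needed. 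The paper avoids all of this by a direct construction: $X=(\bigoplus_{\N}\ell^{p,\infty})_\infty$, the lattice embedding $a\mapsto(a\chi_{S_j})_j$ along the Schreier family, and a projection-killing argument combining $\ell^{p',1}$-duality, the disjoint-plus-almost-order-bounded splitting in the $AL$-space $(\ell^\infty)^*$, and the Dunford--Pettis property of $\ell^\infty$ tested against the weakly null Schreier unit vectors --- machinery of a completely different nature from the push-out framework.
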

\begin{proof}
We begin with some notation. Let $X$ be the Banach lattice $(\bigoplus_\N \ell^{p,\infty})_\infty$.
Clearly, $X$ satisfies an upper $p$-estimate with constant $1$.
We may express any $x\in X$ as $x = (x(j))_{j=1}^\infty = (x(i,j))_{i,j=1}^\infty$, where $(x(j))$ is a bounded sequence in $\ell^{p,\infty}$ and $x(j) = (x(i,j))^\infty_{i=1}$ for each $j$.
Given $m\in \N$, define $P_m$ on $X$ by
\[ (P_mx)(i,j) =\begin{cases} x(m,j)& \text{if $i =m$},\\
0 &\text{if $i\neq m$.}\end{cases}\]
Clearly, $P_m$ is a band projection on $X$ and the projection band $P_mX$ is lattice isometric to $\ell^\infty$ via the map
\[ L_m:\ell^\infty\to P_mX,\ (L_my)(i,j) = \begin{cases}y(j) &\text{if $i = m$,  where $y = (y(j))^\infty_{j=1}$},\\
0&\text{otherwise}.\end{cases}
\]

\medskip

Let $\cS$ be the Schreier family, i.e., all subsets $I$ of $\N$ so that $|I| \leq \min I$.
Since $\cS$ is countable, we can list its elements in a sequence $(S_j)^\infty_{j=1}$.
Given $a= (a_i) \in \ell^{p,\infty}$, let
\[ x_a(i,j) =  \begin{cases}  a_i &\text{if $i\in S_j$,}\\ 0&\text{otherwise.}\end{cases}\]
Denote the sequence of coordinate unit vectors in $\ell^{p,\infty}$ by $(e_i)$. For any $j$,
\[ \|(x_a(i,j))^\infty_{i=1}\|_{p,\infty} = \|\sum_{i\in S_j}a_ie_i\|_{p,\infty} \leq \|(a_i)\|_{p,\infty}.\]
Hence, $T: \ell^{p,\infty}\to X$, $a \mapsto x_a$ is a bounded linear map which is clearly a lattice homomorphism.
On the other hand, if $\|a\|_{p,\infty} >1$ then there exists a non-empty finite set $L$ in $\N$ so that
$|a_i| \geq (p'|L|^{\frac{1}{p}})^{-1}$ for all $i\in L$. 
Let $S_j\in \cS$ be such that $S_j\subseteq L$ and  $|S_j| \geq |L|/2$. Then,
\[ \|x_a\| \geq \|\sum_{i\in S_j}a_ie_i\|_{p,\infty} \geq (p'|L|^{\frac{1}{p}})^{-1}\|\sum_{i\in S_j}e_i\|_{p,\infty} = \frac{1}{p'}\left(\frac{|S_j|}{|L|}\right)^{\frac{1}{p'}} \geq \frac{1}{p'2^{\frac{1}{p'}}}.
\]
This shows that $T$ is a lattice isomorphism from $\ell^{p,\infty}$ into $X$.
\medskip

Let $Y = T\ell^{p,\infty}$.
We will show that there is no bounded projection from $X$ onto $Y$.
Assume on the contrary that there is a projection $P$ from $X$ onto $Y$.
Then there is a sequence $(x^*_i)$ in $X^*$ so that $(x^*_i(x))_i\in \ell^{p,\infty}$ for any $x\in X$ and
$x^*_i(Ta) = a_i$ for any $i$ and $a = (a_k)\in \ell^{p,\infty}$.
It follows readily that $(x^*_i)_i\subseteq X^*$ is equivalent to the unit vector basis of $\ell^{p',1}$.
In particular, it is an unconditional basic sequence in a Banach lattice that is $q$-concave for some $q<\infty$.
Since $(P_i^*)$ is a disjoint sequence of band projections on $X^*$, the sequence $(P^*_ix^*_i)$ is disjoint and hence also unconditional in $X^*$.
By  \cite[Theorem 1.d.6(i)]{LT2}, for any finitely nonzero real sequence $(b_i)$,
\[ \|(b_i)\|_{p',1} \sim \|\sum b_ix^*_i\| \sim \|\sqrt{\sum |b_ix^*_i|^2}\| \geq  \|\sqrt{\sum |b_iP_i^*x^*_i|^2}\| \sim \|\sum b_iP^*_ix^*_i\|.
\]
Thus, the linear map $Q:X\to \ell^{p,\infty}$ defined by $Qx = (P^*_ix^*_i(x))_i$ is bounded. On the other hand, observe that $Te_k \in P_kX$. Therefore,
\begin{equation}\label{01}
    \la Te_k, P_i^*x^*_i\ra = \begin{cases}
\la T e_i, x^*_i\ra = 1 &\text{if $k=i$},\\
0 &\text{otherwise}.\end{cases}
\end{equation}
Hence,  $QTe_i=e_i$ for any $i\in \mathbb{N}$.
\medskip

Regard $L_i$ as a lattice isometric embedding from $\ell^\infty$ into $X$.  Define $y^*_i = L_i^*P_i^*x_i^*$ in $(\ell^\infty)^*$.
Let $I$ be a finite subset of $\N$.  Suppose that $(u_i)_{i\in I}$ is a sequence in $\ell^\infty$ so that $\|(\sum_{i\in I}|u_i|^p)^{\frac{1}{p}}\|_\infty \leq 1$. Set $u = \sum_{i\in I}L_iu_i\in X$.
Then $\|u\| \leq \|(\sum_{i\in I}|u_i|^p)^{\frac{1}{p}}\|_\infty \leq 1$.
Thus,
\begin{align*}\sum_{i\in I}y^*_i(u_i) &= \sum_{i\in I}\la u_i, L^*_iP^*_ix^*_i\ra
= \sum_{i\in I}(P^*_ix^*_i)(u) \\&\leq \|(P^*_ix^*_i(u))\|_{p,\infty}\cdot |I|^{\frac{1}{p'}} = \|Qu\|\cdot |I|^{\frac{1}{p'}}\lesssim |I|^{\frac{1}{p'}}.
\end{align*}
Making use of the duality described on \cite[p.~47]{LT2}, we conclude that there is an absolute constant $C<\infty$ so that
\begin{equation}\label{bound}\|(\sum_{i\in I}|y^*_i|^{p'})^{\frac{1}{p'}}\| \leq C|I|^{\frac{1}{p'}}
\end{equation}
for any finite subset $I$ of $\N$.
Since $(\ell^\infty)^*$ is an $AL$-space, the bounded sequence $(y^*_i)$ has a subsequence $(y^*_{i_k})$ that has a splitting
\[ y^*_{i_k} = u^*_k + v^*_k,\  |u^*_k|\wedge |v^*_k| =0, \text{ $(u^*_k)$ is almost order bounded, $(v^*_k)$ is disjoint}.\]
See, e.g., \cite{MR0937853}.
Since $|v^*_k| \leq |y^*_{i_k}|$ for all $k$, it follows from (\ref{bound}) that $\|v^*_k\| \to 0$.
Thus, the whole sequence $(y^*_{i_k})$ is almost order bounded in $(\ell^\infty)^*$ and hence it is relatively weakly compact.
\medskip

Since $x_i:=Te_i\in P_iX$ and $L_i:\ell^\infty\to P_iX$ is a surjective lattice isometry, we may find a sequence $(u_k)$ in $\ell^\infty$ so that $L_{i_k}u_k = x_{i_k}$ for each $k$.  If we write $u_k = (u_k(j))^\infty_{j=1}$ then
$u_k(j) = 1$ if $k\in S_j$ and $0$ otherwise.
Hence,  for any finitely supported real sequence $(a_k)$,
\[ \|\sum a_ku_k\|_\infty = \sup_j|\sum_{k\in S_j}a_k|.
\]
The expression on the right is the norm of the Schreier space.
The preceding equation says that $(u_k)$ is equivalent to the unit vector basis of the Schreier space.
Thus, $(u_k)$ is a weakly null sequence (in $\ell^\infty$).
Since $\ell^\infty$ has the Dunford-Pettis property, the weakly null sequence $(u_k)$ converges uniformly to $0$ on the relatively weakly compact set $(y^*_{i_k})$.  In particular,
\[ 0 = \lim y^*_{i_k}(u_k) = \lim \la u_k, L^*_{i_k}P^*_{i_k}x^*_{i_k}\ra = \lim \la x_{i_k}, P^*_{i_k}x^*_{i_k}\ra.
\]
However,
\[ e_{i_k} = QTe_{i_k} = Qx_{i_k} = (P^*_jx^*_j(x_{i_k}))_j.\]
Taking the $i_k$-th component yields that $\la x_{i_k}, P^*_{i_k}x^*_{i_k}\ra =1$ for any $k$, which is the desired contradiction.
\end{proof}
A finite dimensional version of \Cref{not inj} can be achieved via a gluing argument. We begin with some preliminaries.
\medskip

By the above, the map $T:\ell^{p,\infty}\to X: = (\oplus_\N \ell^{p,\infty})_\infty$ given by $Ta = (a\chi_{S_j})^\infty_{j=1}$ where $a = (a_i)\in \ell^{p,\infty}$ and $(S_j)$ is the sequence of Schreier sets is a lattice isomorphism so that $T(\ell^{p,\infty})$ is not complemented in $X$.  For each $m\in \N$, regard $\ell^{p,\infty}(m)$ as the subspace $[(e_i)^m_{i=1}]$ in $\ell^{p,\infty}$.
Define $T_m:\ell^{p,\infty}(m)\to X$ by $T_m = T|_{\ell^{p,\infty}(m)}$.
Clearly, there is a constant $C$ so that every $T_m$ is a $C$-lattice embedding.
If $(E_m)$, $(F_m)$ are sequences of Banach spaces and $U_m:E_m\to F_m$ is a bounded linear operator for each $m$ so that $\sup_m\|U_m\| < \infty$, we let $\bigoplus U_m: (\bigoplus E_m)_\infty\to (\bigoplus F_m)_\infty$ be the bounded linear operator defined by $(\bigoplus U_m)((u_m)^\infty_{m=1}) = (U_mu_m)^\infty_{m=1}$.
\begin{cor}
  Let the notation be as above. Suppose that for each $m$, $\pi_m$  is a projection from $X$ onto $Y_m :=T_m(\ell^{p,\infty}(m))$.  Then $\sup_m\|\pi_m\|= \infty$.
\end{cor}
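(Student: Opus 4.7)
Suppose for contradiction that $M := \sup_m \|\pi_m\| < \infty$. The strategy is to construct, via a weak$^*$ ultrafilter limit, a sequence $(y_i^*) \subseteq X^*$ that plays the role of the sequence $(x_i^*)$ appearing in the proof of Theorem~\ref{not inj}, and then to invoke that proof.

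For each $m \in \N$ and $1 \leq i \leq m$, set $y_i^{*,m}(x) := e_i^*(T_m^{-1}\pi_m x)$ for $x \in X$, where $e_i^* \in (\ell^{p,\infty}(m))^*$ is the $i$-th coordinate functional. Since the $T_m$ are uniform $C$-lattice embeddings, for every $x \in X$,
\[
\|(y_i^{*,m}(x))_{i=1}^m\|_{\ell^{p,\infty}(m)} = \|T_m^{-1}\pi_m x\|_{\ell^{p,\infty}(m)} \leq C\|\pi_m x\|_X \leq CM\|x\|,
\]
and in particular $\sup_m\|y_i^{*,m}\|_{X^*} < \infty$ for each $i$. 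Fix a free ultrafilter $\mathcal U$ on $\N$; by Banach--Alaoglu the weak$^*$ limit $y_i^* := w^*\text{-}\lim_{m \to \mathcal U} y_i^{*,m}$ exists in $X^*$. Because the truncation $\ell^{p,\infty}(m) \to \ell^{p,\infty}(N)$ is a contraction whenever $m \geq N$, passing to the limit yields $\|(y_i^*(x))_{i=1}^N\|_{\ell^{p,\infty}(N)} \leq CM\|x\|$ for all $N$, and hence (by the Fatou property of the weak-$\ell^p$ norm) the operator $Q: X \to \ell^{p,\infty}$, $Qx := (y_i^*(x))_i$, is bounded with $\|Q\| \leq CM$. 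Moreover, whenever $m \geq \max(i,k)$ we have $Te_k = T_m e_k \in Y_m$, so $\pi_m Te_k = Te_k$ and $y_i^{*,m}(Te_k) = e_i^*(e_k) = \delta_{ik}$; the weak$^*$ limit preserves this, giving $QTe_k = e_k$ for all $i,k$.

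It remains to run the argument of Theorem~\ref{not inj} with $(y_i^*)$ in place of $(x_i^*)$. The equivalence of $(y_i^*)$ to the unit vector basis of $\ell^{p',1}$ follows from boundedness of $Q$ (upper estimate, via $y_i^* = Q^* e_i^*$) together with the identity $QTa = a$ for all finitely supported $a$ (lower estimate); our construction clearly yields the latter, since for $a$ supported in $\{1,\dots,n\}$ and $m \geq n$ one already has $y_i^{*,m}(Ta) = e_i^*(T_m^{-1}\pi_m T_m a) = a_i$. The subsequent steps of the proof of Theorem~\ref{not inj} — the Khintchine estimate for the disjoint sequence $(P_i^* y_i^*)$, the almost order boundedness of a subsequence of $(L_{i_k}^* P_{i_k}^* y_{i_k}^*) \subseteq (\ell^\infty)^*$ (extracted via the $AL$-splitting), and the final contradiction via the Dunford--Pettis property of $\ell^\infty$ against $\langle Te_{i_k}, P_{i_k}^* y_{i_k}^*\rangle = 1$ — then proceed verbatim. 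The main point to verify, and what I expect to be the only real obstacle, is that the proof of Theorem~\ref{not inj} never needs the full identity $QT = \mathrm{id}_{\ell^{p,\infty}}$ but only $QTa = a$ on finitely supported $a$ (which is all our approximation provides); a careful reading confirms that only the canonical-basis identity is ever invoked.
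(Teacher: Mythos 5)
Your proof is correct, but it takes a genuinely different route from the paper's. The paper keeps \Cref{not inj} as a black box: it observes that $\bigoplus(T_m^{-1}\pi_m T_m)$ is the identity on $(\bigoplus \ell^{p,\infty}(m))_\infty$, and combines this with the isometric embedding $Ja=(a\chi_{[1,m]})_m$, the contraction $V$, and a \emph{coordinatewise} ultrafilter-limit contraction $Q$ satisfying $QJ=\mathrm{id}_{\ell^{p,\infty}}$ exactly (each coordinate of $Ja$ stabilizes), to manufacture an honest bounded projection $R=TQ(\bigoplus T_m^{-1}\pi_m)V$ of $X$ onto $T(\ell^{p,\infty})$, directly contradicting the \emph{statement} of \Cref{not inj}. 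You instead take weak$^*$ ultrafilter limits of the coordinate functionals $e_i^*T_m^{-1}\pi_m$ and rerun the \emph{proof} of \Cref{not inj}. The crucial point, which you correctly flag, is that your $Q$ only satisfies $QTa=a$ for finitely supported $a$, and this cannot be upgraded to $QT=\mathrm{id}$ by density, since the finitely supported sequences are not dense in $\ell^{p,\infty}$; this is exactly why you must reopen the proof rather than cite the theorem, and your verification is accurate: the lower $\ell^{p',1}$-estimate for $(y_i^*)$ only requires testing $\sum b_iy_i^*$ against $Ta$ with $a$ finitely supported in $B_{\ell^{p,\infty}}$, the identity \eqref{01} and the final contradiction use only $y_k^*(Te_k)=\delta_{ik}$ together with $Te_k\in P_kX$, and the intermediate steps (square-function estimate via $q$-concavity of $X^*$, the $AL$-splitting, Dunford--Pettis) use nothing about the sequence beyond its $\ell^{p',1}$-equivalence. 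What the paper's route buys is modularity and brevity --- the corollary follows from the theorem's statement with no re-examination of its proof. What your route buys is the explicit bound $\|Q\|\leq CM$ and the observation, of some independent interest, that the proof of \Cref{not inj} needs only biorthogonality on basis vectors rather than a full right inverse; the cost is that your argument is no longer independent of the internals of that proof. One cosmetic caveat: your $(y_i^*)$ collides with the letter $y_i^*=L_i^*P_i^*x_i^*$ already used inside the paper's proof of \Cref{not inj}, so one of the two should be renamed before the arguments are spliced together.
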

\begin{proof}
Assume, on the contrary, that there is such a sequence $(\pi_m)$ so that $\sup\|\pi_m\| <\infty$. First of all, it is clear that the composition
\[(\bigoplus T_m^{-1})(\bigoplus \pi_m)(\bigoplus T_m) = \bigoplus (T^{-1}_m\pi_m T_m)\]
 is the identity operator on $(\bigoplus \ell^{p,\infty}(m))_\infty$.
Let $\cU$ be a free ultrafilter on $\N$.  If $b = (b_m)\in (\bigoplus \ell^{p,\infty}(m))_\infty$, $b_m = (b_m(j))^m_{j=1}$, let $Qb = (\lim_{m\rightarrow \cU}b_m(j))^\infty_{j=1}$.
It is clear that $Qb\in \ell^{p,\infty}$ and  $Q:  (\bigoplus \ell^{p,\infty}(m))_\infty\to \ell^{p,\infty}$ is a
contraction.
Define $J:\ell^{p,\infty} \to (\bigoplus \ell^{p,\infty}(m))_\infty$ by $Ja = (a\chi_{[1,m]})^\infty_{m=1}$.
Then $J$ is a lattice isometric embedding and  $Q \bigoplus (T^{-1}_m\pi_m T_m)J = QJ$ is the identity map on $\ell^{p,\infty}$.
Finally, define one more map $V: X = (\oplus \ell^{p,\infty})_\infty \to (\oplus X)_\infty$ as follows: for $c = (c_j)^\infty_{j=1} \in  (\oplus \ell^{p,\infty})_\infty$, let
\[ Vc = ((c_j\chi_{[1,m]})^\infty_{j=1})^\infty_{m=1}.
\]
Obviously, $V$ is a contraction.  Moreover, for any $a\in \ell^{p,\infty}$,
\[ VTa = V((a\chi_{S_j})^\infty_{j=1}) = ((a\chi_{S_j}\chi_{[1,m]})^\infty_{j=1})^\infty_{m=1} = (T_m(a\chi_{[1,m]}))^\infty_{m=1} = (\bigoplus T_m)Ja.
\]
In summary, we have the commutative diagram
\begin{center}
		\begin{tikzcd}
		\ell^{p,\infty} \arrow[r, "J"]\arrow[rd, swap, "T"] & (\bigoplus \ell^{p,\infty}(m))_\infty \arrow[r, "\bigoplus T_m"]
		& (\bigoplus X)_\infty \\& X\arrow[ru, swap, "V"] &
		\end{tikzcd}
	\end{center}
and
\[ \ell^{p,\infty}\stackrel{(\bigoplus T_m)J}{\longrightarrow} (\bigoplus X)_\infty \stackrel{Q(\bigoplus T_m^{-1}\pi_m)}{\longrightarrow}\ell^{p,\infty}
\]
is the identity map.
Thus, the map $R:=TQ(\bigoplus T_m^{-1}\pi_m)V:X\to X$ is a projection on $X$ so that $RT = T$.
Hence, $\rng R = T(\ell^{p,\infty})$, contrary to what we previously established for $T$.
\end{proof}

\section*{Acknowledgements}

The research of P.~Tradacete and E.~Garc\'ia-S\'anchez is partially supported by the grants PID2020-116398GB-I00 and CEX2019-000904-S funded by MCIN/AEI/10.13039/501100011033. E.~Garc\'ia-S\'anchez is also partially supported by ``European Union NextGenerationEU/PRT'' and FPI grant CEX2019-000904-S-21-3 funded by MCIN/AEI/10.13039/501100011033. P.~Tradacete is also supported by a 2022 Leonardo Grant for Researchers and Cultural Creators, BBVA Foundation.
\medskip

We thank Timur Oikhberg for interesting discussions related to this work.

\bibliographystyle{plain}
\bibliography{refs.bib}

\end{document}